\documentclass[11pt,a4paper,fleqn]{scrartcl}
\pdfoutput=1
\RequirePackage{amsthm,amsmath,natbib}
\RequirePackage{amssymb,amstext}
\RequirePackage{hypernat}
\usepackage[pdfpagemode=UseOutlines ,plainpages=false
,hypertexnames=false ,pdfpagelabels ,hyperindex=true,colorlinks=true]{hyperref}
	\makeatletter
	\Hy@breaklinkstrue
	\makeatother
\usepackage{color}
\definecolor{darkred}{rgb}{0.6,0.0,0.1}
\definecolor{darkgreen}{rgb}{0,0.5,0}
\definecolor{darkblue}{rgb}{0,0,0.5}
\hypersetup{colorlinks ,linkcolor=darkblue ,filecolor=darkgreen
,urlcolor=darkblue ,citecolor=black}

\usepackage{url}
\usepackage{verbatim}
\usepackage{ bm}

\usepackage[english]{babel}

\renewcommand{\cite}{\citet}
\bibliographystyle{abbrvnat}

\usepackage{abrev-package}

\definecolor{dgreen}{rgb}{0,0.5,0}
\definecolor{dblue}{rgb}{0,0,0.9}
\definecolor{dred}{rgb}{0.6,0.0,0.1}
\definecolor{dgold}{rgb}{0.5,0.3,0.0}
\definecolor{dvio}{rgb}{0.6,0.3,0.5}
\definecolor{gray}{rgb}{0.5,0.5,0.5}

\setlength{\parindent}{0pt}

\oddsidemargin=0.25in
\evensidemargin=0in
\textwidth=6in
\headheight=0pt
\headsep=0pt
\topmargin=0in
\textheight=9in

\newtheoremstyle{mysc}
  {3pt}
  {3pt}
  {\it}
  {}
  {\color{darkred}\sc}
  {.}
  {.5em}
  {}

\newtheoremstyle{myex}
  {10pt}
  {10pt}
  {\rm}
  {}
  {\color{darkred}\sc}
  {.}
  {.5em}
  {}

\theoremstyle{mysc}\newtheorem{prop}{Proposition}[section]
\theoremstyle{mysc}
\theoremstyle{mysc}\newtheorem{coro}[prop]{Corollary}
\theoremstyle{mysc}\newtheorem{theo}[prop]{Theorem}
\theoremstyle{mysc}\newtheorem{defin}{Definition}[section]
\theoremstyle{mysc}\newtheorem{lem}[prop]{Lemma}
\theoremstyle{myex}\newtheorem{rem}{Remark}[section]
\theoremstyle{myex}\newtheorem{example}{Example}[section]
\theoremstyle{myex}

\theoremstyle{mysc}\newtheorem{assA}{Assumption}

\theoremstyle{mysc}
\theoremstyle{mysc}
\theoremstyle{mysc}

\numberwithin{equation}{section}


\makeatletter%
\def\@fnsymbol#1{\ensuremath{\ifcase#1\or * \or \star \or 1 \or 2\or 3\or  , \or
g\or h\or i\else\@ctrerr\fi}}%
\makeatother%

\author{{\sc Christoph Breunig}\thanks{
Lehrstuhl f\"ur Statistik, Abteilung Volkswirtschaftslehre, L7, 3-5, 68131 Mannheim, Germany, e-mail: \url{	cbreunig@staff.mail.uni-mannheim.de}} \\
{\small \it Universit\"at Mannheim} \and {\sc Jan Johannes}\thanks{Institut de statistique, biostatistique et sciences actuarielles (ISBA), Voie du Roman Pays 20, B-1348 Louvain-la-Neuve, Belgique, e-mail: \url{jan.johannes@uclouvain.be}}\\
{\small\it Universit\'e catholique de Louvain}}

\title{{\bf Adaptive  estimation of functionals in nonparametric instrumental regression.}}

\usepackage{srcltx}

\begin{document}
\date{\today}
\maketitle

\begin{abstract} We consider the problem of estimating the value $\ell(\sol)$ of a linear functional, where  the structural function $\sol$ models a nonparametric relationship in presence of instrumental variables.
 We propose a plug-in estimator which is based on a dimension reduction technique and additional thresholding. It is shown that this estimator is
consistent  and can attain the minimax optimal rate of convergence under additional regularity conditions. This, however, requires an optimal choice of  the dimension parameter $m$ depending on certain characteristics of  the structural function $\sol$ and the joint distribution of the regressor and the instrument, which are unknown in practice. We propose a fully data driven choice of $m$ which 
combines model selection and Lepski's method. We show that the  adaptive estimator attains the optimal rate of convergence up to a logarithmic factor. 
The theory in this paper is illustrated by considering  classical smoothness assumptions and we discuss examples such as pointwise estimation or estimation of
  averages of the structural function $\sol$.  
\end{abstract}

\begin{tabbing}
\noindent \emph{Keywords:} \=Nonparametric regression, Instrument, Linear functional,\\ 
\> Optimal rates of convergence, Sobolev space,\\
\> finitely and infinitely smoothing operator,\\
\> Adaptive estimation, Model selection, Lepski's method.\\[.2ex]
\noindent\emph{AMS subject classifications:} Primary 62G08; secondary 62G20.
\end{tabbing}
This work was supported by the DFG-SNF research group FOR916 and by the IAP research network no. P6/03 of the Belgian Government (Belgian Science Policy).
 \section{Introduction}
We consider estimation of the value of a linear functional of the structural function $\sol$ in a nonparametric instrumental regression model.
The structural function  characterizes the dependency of a response $Y$ on the variation  of an  explanatory random variable $Z$   by
  \begin{subequations}
	\begin{equation}
	\label{model:NP}
	Y = \varphi(Z) + U \quad\mbox{ with }\quad\Ex[U|Z]\ne 0
	\end{equation}
for some  error term $U$. In other words, the structural function equals not the conditional mean function of $Y$ given $Z$. In this model, however, a sample from $(Y,Z,W)$ is available,
where $W$ is a random variable, an
instrument,  such that 
	\begin{equation}
	\label{model:NP2}
	\Ex [U | W] = 0.
	\end{equation}
      \end{subequations}
Given some a-priori knowledge on the unknown structural function $\sol$, captured by a function class $\cF$, its  estimation has been intensively discussed in the literature. In contrast, in this paper we are interested in estimating the value $\ell(\sol)$ of a continuous linear functional $\ell:\cF\to\mathbb R$. Important examples discussed in this paper are the weighted average derivative or the point evaluation functional which are both continuous under appropriate conditions on $\cF$. We establish a lower bound of the maximal mean squared error for estimating $\ell(\sol)$ over a wide range of classes $\cF$ and functionals $\ell$. As a step towards adaptive estimation, we propose in this paper a plug-in estimator of $\ell(\sol)$ which is consistent and minimax optimal. This estimator is based on a linear Galerkin approach which involves the choice of a dimension parameter. We present a method for choosing this parameter in a data driven way combining model selection and Lepski's method. Moreover, it is shown that
the adaptive estimator can attain the minimax optimal rate of convergence within a logarithmic factor.

 Model (\ref{model:NP}--\ref{model:NP2}) has been introduced first by \cite{F03eswc} and \cite{NP03econometrica}, while its identification has been studied  e.g. in \cite{CFR06handbook}, \cite{DFR02} and \cite{FJVB07}. It is interesting to note that recent applications and extensions of this approach include nonparametric tests of exogeneity (\cite{BH07res}), quantile regression models (\cite{HL07econometrica}), or semiparametric modeling (\cite{FJVB05}) to name but a few. 
For example, 
\cite{AC03econometrica},  \cite{BCK07econometrica}, \cite{ChenReiss2008} or \cite{NP03econometrica}  consider sieve minimum distance estimators of $\sol$, while  \cite{DFR02}, \cite{HallHorowitz2005}, \cite{GS06}
or \cite{FJVB07} study penalized least squares estimators.
A linear Galerkin approach to construct an estimator of $\sol$ coming from the inverse problem community (c.f. \cite{EfromovichKoltchinskii2001} or \cite{HoffmannReiss04}) has been proposed by \cite{Johannes2009}.
But  estimating  the
structural function $\sol$ as a whole involves the inversion of the conditional expectation operator of $Z$ given $W$ and  
 generally leads to an ill-posed inverse problem (c.f. \cite{NP03econometrica} or \cite{F03eswc}). This essentially implies  that all
proposed estimators   have under reasonable assumptions  very poor rates of convergence.
In contrast, it might be possible to estimate certain local features of $\sol$, such as the value of  certain linear functionals
at the usual parametric rate of convergence.

The nonparametric estimation of the value of a linear functional  from Gaussian white noise observations is a subject of considerable literature (c.f. \cite{Speckman1979}, \cite{Li1982} or \cite{IbragimovHasminskii1984} in case of  direct observations, while in case of indirect observations we refer to \cite{DonohoLow1992}, \cite{Donoho1994} or \cite{GoldPere2000}). However, nonparametric instrumental regression is in general not a  Gaussian white noise model.
On the other hand, in the former setting the parametric estimation of linear functionals has been addressed in recent years in the econometrics literature. To be more precise, under restrictive conditions on the linear functional $\ell$ and the joint distribution of $(Z,W)$ it is shown in \cite{AC07}, \cite{Santos11}, and \cite{TS07} that it is possible to construct $n^{1/2}$-consistent estimators of $\ell(\sol)$. In this situation, efficiency bounds are derived by \cite{AC07} and, when $\sol$ is not necessarily identified, by \cite{TS07}. We show below, however, that $n^{1/2}$-consistency is not possible for a wide range of linear functionals $\ell$ and joint distributions of $(Z,W)$.

In this paper we establish a minimax theory for the nonparametric estimation of the value of a linear functional $\ell(\sol)$ of the structural function $\sol$.
For this purpose we consider a plug-in estimator $\hell_m:=\ell(\hsol_m)$ of $\ell(\sol)$, where the estimator $\hsol_m$ is proposed by \cite{Johannes2009} and the integer $m$ denotes a dimension to be chosen appropriately.
The accuracy of $\hell_m$ is measured by its maximal mean squared error  uniformly  over the classes $\cF$ and $\cP$, where $\cP$ captures conditions  on the unknown joint distribution $P_{UZW}$ of the random vector $(U,Z,W)$, i.e., $P_{UZW}\in\cP$. The class $\cF$ reflects prior information on the structural function $\sol$, e.g., its level of smoothness, and will be constructed flexible enough to characterize, in particular, differentiable or analytic functions. On the other hand, the condition $P_{UZW}\in\cP$ specifies amongst others some mapping properties of the conditional expectation operator of $Z$ given $W$ implying a certain  decay of its singular values. The construction of $\cP$ allows us to discuss both a polynomial and an exponential decay of those singular values. 
Considering the maximal mean squared error over $\cF$ and $\cP$ we derive a lower bound for estimating $\ell(\sol)$. Given an optimal choice $\mstar$ of the dimension we show that the lower bound is attained by $\hell_{\mstar}$  up to a constant $C>0$, i.e., 
\begin{equation*}
 \sup_{P_{UZW}\in\cP}\sup_{\sol\in\cF}\Ex|\hell_{\mstar}-\ell_\rep(\sol)|^2\leq C\inf_{\breve\ell}\sup_{P_{UZW}\in\cP}\sup_{\sol\in\cF}\Ex|\breve\ell-\ell_\rep(\sol)|^2
\end{equation*}
where the infimum on the right hand side runs over all possible estimators $\breve\ell$. Thereby, the estimator $\hell_{\mstar}$ is minimax optimal even though the optimal choice $\mstar$ depends on the classes $\cF$ and $\cP$, which are unknown in practice.

The main issue addressed in this paper is the construction of a data driven selection method for the dimension parameter which adapts to the unknown classes $\cF$ and $\cP$. When estimating the structural function $\sol$ as a  whole \cite{LoubesMarteau2009} and \cite{Johannes2009} propose adaptive estimators under the condition that the eigenfunctions of the unknown  conditional expectation operator are a-priori given. In contrast our method does not involve this a-priori knowledge and moreover, allows for both 
a polynomial and an exponential decay of the associated singular values. The methodology combines a model selection approach (cf. \cite{BarronBirgeMassart1999} and its detailed discussion in \cite{Massart2007}) and Lepski's method (cf. \cite{Lepski90}). It is inspired by the recent work of \cite{GoldLepski10}. 
To be more precise, the adaptive choice $\hm$  is defined as the minimizer of a random penalized contrast 
criterion\footnote{For a sequence $(a_m)_{m\geq 1}$ having a minimal value in $A\subset \mathbb N$ set $\argmin_{m\in A}\{a_m\}:=\min\{m: a_m\leq a_{m'}\forall m'\in A\}$.}, i.e., 
\begin{subequations}
\begin{equation}\label{full:def:model}
\hm:=\argmin_{1\leq m\leq \hM}\set{\hcontr_m+\hpen_m}
 \end{equation}
 with random integer $\hM$  and random penalty sequence $\hpen:=(\hpen_m)_{m\geq 1}$  to be  defined below and the sequence of contrast $\hcontr:=(\hcontr_m)_{m\geq 1}$ given by
\begin{equation}\label{full:def:contrast}
  \hcontr_m:=\max_{m\leq m'\leq \hM}\set{\absV{\hell_{m'}-\hell_{m}}^2 -\hpen_{m'}}.
\end{equation}
\end{subequations}
 With this adaptive choice $\hm$ at hand the estimator $\hell_{\hm}$ is shown to be minimax optimal within a logarithmic factor over a wide range of classes $\cF$ and $\cP$.

The paper is organized as follows. In Section \ref{sec:lower} we introduce  our basic model assumptions and derive a lower bound 
for
estimating the value of a linear functional in nonparametric instrumental regression. In Section \ref{sec:gen}
we show   consistency  of the proposed estimator first and second that it attains the lower bound 
up to a constant.  We
illustrate the general results by considering classical smoothness assumptions. 
The applicability of these results is demonstrated by various examples such as the estimation of the structural function at a point, of its average or of its weighted average derivative. Finally, in section \ref{sec:adaptive} we construct the random upper bound $\hM$ and the random penalty sequence $\hpen$ used in (\ref{full:def:model}--\ref{full:def:contrast}) to define the data driven selection procedure for the dimension parameter $m$. The proposed adaptive estimator is shown to attain the lower bound within a logarithmic factor.
All proofs can be found in the appendix.

 \section{Complexity of functional estimation: a lower bound.}\label{sec:lower}
\subsection{Notations and basic model assumptions.}
The nonparametric instrumental regression model (\ref{model:NP}--\ref{model:NP2}) leads to a Fredholm equation of the first kind. To be more precise, let us introduce the conditional expectation operator $T\phi:=\Ex[\phi(Z)|W]$ mapping $L^2_Z=\{\phi:\,\Ex[\phi^2(Z)]<\infty\}$ to $L^2_W=\{\psi:\, \Ex[\psi^2(W)]<\infty\}$ (which are endowed with the usual inner products $\skalar_Z$ and
$\skalar_W$, respectively). Consequently, model (\ref{model:NP}--\ref{model:NP2}) can be written as
	\begin{equation}
	\label{model:EE}
	g= T\sol
	\end{equation}
        where the function $g:=\Ex[Y | W]$ belongs to $L^2_W$.  In what follows we always assume that there exists a unique solution $\sol\in L^2_Z$ of equation (\ref{model:EE}), i.e., $g$
        belongs to the range of $\op$, and that the null space of $\op$ is trivial (c.f. \cite{EHN00} or \cite{CFR06handbook} in the special case of nonparametric instrumental
        regression). Estimation of the structural function $\sol$ is thus linked with the inversion of the operator $\op$. Moreover, we suppose throughout the paper that $T$ is
        compact which is under fairly mild assumptions satisfied (c.f. \cite{CFR06handbook}). Consequently, a
        continuous generalized inverse of $\op$ does not exist as long as the range of the operator $\op$ is an infinite dimensional subspace of $L^2_W$. This corresponds to the
        setup of ill-posed inverse problems.

In this section we show that the obtainable accuracy of any estimator of the value $\ell(\sol)$ of a linear functional can be essentially determined by regularity
conditions imposed  on the structural  function $\sol$ and the conditional expectation operator $\op$.  In this paper these conditions are characterized by different weighted norms in $L^2_Z$
with respect to a pre-specified orthonormal basis $\{\basZ_j\}_{j\geq 1}$   in $L^2_Z$, which we formalize  now.
 Given  a positive sequence of  weights $w:=(w_j)_{j\geq1}$ we define the
weighted norm $\normV{\phi}_w^2:=\sum_{j\geq 1}w_j|\skalarV{\phi,\basZ_j}_Z|^2$, $\phi\in L^2_Z$, the completion  $\cF_w$ of $L^2_Z$ with 
respect to $\norm_w$ and the ellipsoid $\cF_{w}^r := \big\{\phi\in \cF_w: \normV{\phi}_{w}^2\leq r\big\}$ with radius $r>0$.  We shall stress that the basis 
$\{\basZ_j\}_{j\geq 1}$ does not necessarily correspond to the eigenfunctions of $\op$. In the following we write $a_n\lesssim b_n$  when there exists a generic constant $C>0$ such that  $a_n\leqslant C\, b_n$  for sufficiently large $ n\in\N$ and  $a_n\sim b_n$ when $a_n\lesssim b_n$ and $b_n\lesssim a_n$ simultaneously.

 \paragraph{Minimal regularity conditions.}
Given a nondecreasing sequence of weights $\sw:=(\sw_j)_{j\geqslant 1}$, we suppose, here and
subsequently, that the structural function $\sol$ belongs to the ellipsoid $\cF_{\sw}^\sr$
for some $\sr>0$. The ellipsoid $\cF_\sw^\sr$ captures all the prior information (such as smoothness) about the unknown
structural function $\sol$. 
Observe that the dual space of $\cF_\sw$ can be identified with $\cF_{1/\sw}$ where $1/\sw:=(\sw_j^{-1})_{j\geq 1}$ (cf. \cite{KreinPetunin1966}). 
To be more precise, for all $\phi\in\cF_{\sw}$ the value $\skalarV{\rep,\phi}_Z$ is well defined for all $\rep\in\cF_{1/\sw}$ and by Riesz's Theorem there exists a unique $\rep\in\cF_{1/\sw}$ such that $\ell(\phi)=\skalarV{\rep,\phi}_Z=:\ell_{\rep}(\phi)$.
In certain applications one might not only be interested  in the performance of an estimation procedure of  $\ell_\rep(\sol)$ for a given representer $\rep$, 
but also for $\rep$ varying over the ellipsoid $\cF_\hw^\hr$ with 
radius  $\hr>0$ for a nonnegative sequence  $\hw:=(\hw_j)_{j\geq1}$  
satisfying  $\inf_{j\geq 1}\{\hw_j\bw_j\}>0$.  Obviously, $\cF_\hw$ is a subset of $\cF_{1/\bw}$.

Furthermore, as usual in the context of ill-posed inverse problems, we specify some mapping properties of
the  operator under consideration.
Denote by $\cT$ the set of all compact operators mapping $L^2_Z$ into $L^2_W$.  Given a sequence of weights $\Opw:=(\Opw_j)_{j\geqslant 1}$ and $\Opd\geqslant
1$ we define the subset $\Opwd$ of $\cT$ by
\begin{equation}\label{bm:link}
\Opwd:=\Bigl\{ T\in\cT:\quad   \normV{\phi}_{\Opw}^2/d\leqslant \normV{\Op \phi}_W^2\leqslant {\Opd}\, \normV{\phi}_{\tw}^2,\quad \forall \phi \in L^2_Z\Bigr\}.
\end{equation}
Notice first that any operator $T\in\Opwd$ is injective if the sequence $\opw$ is strictly positive.
Furthermore, for all $T\in\Opwd$ it follows that  $\opw_j/d\leq \normV{T\basZ_j}_W^2\leq d\Opw_j$ for all $j\geq 1$ and if $(s_j)_{j\geq1}$ denotes the ordered  sequence of singular values of $T$ then it holds $\opw_j/d\leq s^2_j\leq d \tw_j$.
In other words, the sequence $\Opw$ specifies the decay of the singular values of $\Op$.
In what follows, all the results are derived under regularity conditions on the structural function $\sol$ and the conditional expectation operator $\Op$ described through the
sequence $\sw$ and $\Opw$, respectively.  We provide  illustrations of these conditions below by assuming a \lq\lq regular decay\rq\rq\ of these sequences.  The next
assumption summarizes our minimal regularity conditions on these sequences.
\begin{assA}\label{ass:reg} Let  $\sw:=(\sw_j)_{j\geqslant 1}$, $\rw:=(\rw_j)_{j\geqslant 1}$ and  $\Opw:=(\Opw_j)_{j\geqslant 1}$ be strictly positive sequences of weights
  with   $\sw_1=\rw_1=\Opw_1= 1$ such that   $\sw$ is nondecreasing with $|j|^3\sw_j^{-1}=o(1)$ as $j\to\infty$, $\rw$  satisfies $\inf_{j\geq1}\{\rw_j\sw_j\}>0$ and $\Opw$ is a
  nonincreasing sequence with $\lim_{j\to\infty}v_j=0$.
\end{assA}
\begin{rem}\label{rem:ill:cases}
We illustrate Assumption \ref{ass:reg} for typical choices of $\sw$ and $\opw$ usually studied in the literature (c.f. \cite{HallHorowitz2005}, \cite{ChenReiss2008} or \cite{JoVBVa07}),
 that is,
 \begin{enumerate}
 \item[(pp)] $\sw_j  \sim |j|^{2p}$ with  $p>3/2$, $\tw_j  \sim |j|^{-2a}$,  $a>0$, and
\begin{enumerate}
\item[(i)]$[h]_j^2\sim |j|^{-2s}$, $s>1/2-p$ or
\item[(ii)]$\rw_j  \sim |j|^{2s}$, $s>-p$.
\end{enumerate}
\item[(pe)]$\sw_j  \sim |j|^{2p}$, $p>3/2$ and $\tw_j  \sim \exp(-|j|^{2a})$, $a>0$, and 
\begin{enumerate}
\item[(i)]$[h]_j^2\sim |j|^{-2s}$, $s>1/2-p$ or
\item[(ii)]$\rw_j  \sim |j|^{2s}$, $s>-p$.
\end{enumerate}
\item[(ep)]$\sw_j  \sim \exp(|j|^{2p})$, $p>0$   and $\tw_j  \sim |j|^{-2a}$,.$a>0$, and
\begin{enumerate}
\item[(i)]$[h]_j^2\sim |j|^{-2s}$, $s\in\mathbb R$ or
\item[(ii)]$\rw_j  \sim |j|^{2s}$, $s\in\mathbb R$.
\end{enumerate}
\end{enumerate}
Note that condition $|j|^3\sw_j^{-1}=o(1)$ as $j\to\infty$ is automatically satisfied in case of (ep). In the other two cases this condition states under classical smoothness assumptions  that, roughly speaking, the structural function $\sol$ has to be differentiable.
\hfill$\square$\end{rem}

In order to formulate   below      the lower  as well as the upper bound let us define for $x\geq 1$%
\begin{equation}\label{m:known:operator}\mstarx:=\argmin \limits_{m \in \N} \set{\frac{\max \set{\frac{\Opw_m}{\sw_m},x^{-1}}}{\min \set{\frac{\Opw_m}{\sw_m},x^{-1}}}}, \quad
  \astarx:=\max \set{\frac{\Opw_{\mstarx}}{\sw_{\mstarx}},x^{-1}}.\end{equation}
We shall see that the minimax optimal rate  is determined   by the sequence $\drep:=(\drepn)_{n\geq1}$, in case of a fixed representer $\rep$,  and
$\drw:=(\drwn)_{n\geq1}$ in case of a representer varying over the class $\cF_\rw^\rr$. These sequences are given  for all $x\geq1$  by
\begin{equation}\label{rate:known:operator}
\drepx:=\max \set{\astarx \sum\limits_{j=1}^{\mstarx} \frac{[\rep]_j^2}{\Opw_j}, \sum_{j>\mstarx}\frac{[\rep]_j^2}{\sw_j}}\quad\mbox{and}\quad\drwx:=\astarx\,\max \limits_{1\leq j \leq {\mstarx}}\set{ \frac{1}{\rw_j \Opw_j}}.
\end{equation}
In case of adaptive estimation the rate of convergence is given by $\drepad:=(\drepnl)_{n\geq1}$  and
$\drwad:=(\drwnl)_{n\geq1}$, respectively.
For ease of notation let $\md:=\mstarnl$ and $\ad:=\astarnl$. The  bounds  established below need the following additional assumption, which is satisfied in all cases   used to illustrate the results.
\begin{assA}\label{ass:reg:II}
Let  $\sw$ and $\Opw$ be  sequences such that%
\begin{equation}\label{eta:known:operator}
0<\kappa:=\inf_{n\geq1} \set{(\astarn)^{-1} \min  \set{\frac{\Opw_{\mstarn}}{\sw_{\mstarn}},n^{-1}} }\leq1.\end{equation}
\end{assA}

\subsection{Lower bounds.}
The results derived below involve assumptions on the conditional moments of the random variables $U$ given $W$, captured by $\cU_\sigma$, which contains all conditional distributions of $U$ given $W$, denoted by $P_{U|W}$, satisfying $\Ex[U|W]=0$ and $\Ex[U^4|W]\leq \sigma^4$ for some $\sigma>0$. 
The next assertion gives a lower bound for the mean squared error of any estimator when estimating the value $\ell_\rep(\sol)$ of a linear functional with given representer $\rep$
and structural function $\sol$ in  the function class $\cF_\sw^\sr$.

\begin{theo}\label{res:lower}Assume an iid. $n$-sample of $(Y,Z,W)$ from the model  (\ref{model:NP}--\ref{model:NP2}).
Let $\sw$ and $\Opw$ be sequences satisfying Assumptions \ref{ass:reg} and \ref{ass:reg:II}.
Suppose that $\sup_{j\geq 1} \Ex [\basZ_j^4(Z)|W]\leq \eta^4$,  $\eta\geq1$, and $\sigma^4\geq \big(\sqrt 3+4\sr \,\eta^2\sum_{j\geq 1}\sw_j^{-1}\big)^2$.  Then for all $n\geq 1$ we have
\begin{equation*}   \inf_{\breve\ell}\inf_{\op\in\Opwd} \sup_{P_{U|W}\in \cU_\sigma} \sup_{\sol \in\cF_\sw^{\sr}}  \Ex|\breve{\ell}-\ell_\rep(\sol)|^2\geqslant
  \frac{\kappa}{4} \,\min \bigg( \frac{1}{2\,\Opd}\,,\,  \sr\bigg)\,\drepn
\end{equation*}
where the first infimum runs over all possible estimators $\breve\ell$.
\end{theo}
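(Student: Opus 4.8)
The plan is to reduce the estimation problem to a testing problem between two (or a small family of) carefully chosen structural functions, and to apply a standard two-point (Le Cam) argument combined with a reduction to a conditional Gaussian experiment. The key point is that the difficulty is governed by the index $\mstarn$ which balances the operator smoothing $\Opw_m$ against the structural smoothness $\sw_m$, and by the representer weights $[\rep]_j^2$ appearing in $\drepn$. Concretely, I would split the analysis into the two regimes dictated by the $\max$ in the definition \eqref{rate:known:operator} of $\drepn$: the ``bias-type'' regime where $\sum_{j>\mstarn}[\rep]_j^2/\sw_j$ dominates, and the ``variance-type'' regime where $\astarn\sum_{j=1}^{\mstarn}[\rep]_j^2/\Opw_j$ dominates.

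First I would treat the bias regime. Here I set $\sol_0=0$ (or a fixed reference point in the interior of $\cF_\sw^\sr$) and $\sol_1=\sol_0+\delta\sum_{j>\mstarn}c_j\basZ_j$ for suitable coefficients $c_j$ chosen proportional to $[\rep]_j/\sw_j$, with $\delta$ calibrated so that $\sol_1\in\cF_\sw^\sr$ (this uses $\sw_1=1$ and the summability $\sum_j\sw_j^{-1}<\infty$ implicit in $|j|^3\sw_j^{-1}=o(1)$). Because these coordinates are ``above'' $\mstarn$, the operator $\op\in\Opwd$ maps the perturbation to something of squared $L^2_W$-norm at most $\Opd\sum_{j>\mstarn}\sw_j^{-1}[\rep]_j^2\cdot(\text{stuff})$, which by the defining property of $\mstarn$ is controlled by $\astarn$ times the perturbation's size, so the two induced distributions of the data $(Y,Z,W)$ have bounded Kullback--Leibler divergence (here the moment assumption $\sigma^4\geq(\sqrt3+4\sr\,\eta^2\sum_j\sw_j^{-1})^2$ and $\sup_j\Ex[\basZ_j^4(Z)|W]\leq\eta^4$ enter, to dominate the likelihood ratio by a Gaussian computation with a genuinely Gaussian $U$). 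Meanwhile $|\ell_\rep(\sol_1)-\ell_\rep(\sol_0)|=\delta\sum_{j>\mstarn}c_j[\rep]_j\gtrsim\big(\sum_{j>\mstarn}[\rep]_j^2/\sw_j\big)^{1/2}$, so Le Cam's two-point lemma yields the lower bound of the claimed order in this regime.

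Next the variance regime, which is the more delicate one. Here I would use a many-point / Assouad-type construction: perturb $\sol$ along the coordinates $\basZ_1,\dots,\basZ_{\mstarn}$, writing $\sol_\theta=\sum_{j=1}^{\mstarn}\theta_j\gamma_j\basZ_j$ with $\theta\in\{0,1\}^{\mstarn}$ and $\gamma_j$ calibrated against $\rr$-type and $\Opd$-type constraints so every $\sol_\theta\in\cF_\sw^\sr$ and so that the signal-to-noise ratio per coordinate is of constant order $n^{-1}\Opw_j^{-1}$ — precisely the quantity summed in $\astarn\sum_{j\leq\mstarn}[\rep]_j^2/\Opw_j$. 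The functional separation along the $j$-th coordinate is $\gamma_j[\rep]_j$, and one picks $\gamma_j^2\sim (n\,\astarn\,\text{stuff})^{-1}\Opw_j^{-1}$; the role of Assumption \ref{ass:reg:II} (the constant $\kappa$) is exactly to guarantee that this choice keeps the KL divergence per coordinate bounded uniformly in $n$, since $\kappa$ lower-bounds $(\astarn)^{-1}\min\{\Opw_{\mstarn}/\sw_{\mstarn},n^{-1}\}$. Summing over $j$ via an Assouad lemma (with the Varshamov--Gilbert or a direct per-coordinate argument, since $\ell_\rep$ is linear the functional is additive over coordinates) produces $\gtrsim\kappa\sum_{j\leq\mstarn}\gamma_j^2[\rep]_j^2\sim\kappa\,\astarn\sum_{j\leq\mstarn}[\rep]_j^2/\Opw_j$, again of the claimed order, with the $\min(1/(2\Opd),\sr)$ factor emerging from the two competing feasibility constraints (staying in $\Opwd$ versus staying in $\cF_\sw^\sr$) and the $1/4$ from the standard testing constants. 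Finally I would combine the two regimes: since $\drepn$ is the maximum of the two quantities, taking the larger of the two lower bounds (up to halving the constant) gives the full claim, and I note that the infimum over $\op\in\Opwd$ is harmless because both constructions were carried out for an arbitrary fixed admissible $\op$ — indeed I would fix the diagonal operator $\op\basZ_j=\sqrt{\Opw_j}\,\psi_j$ to make all the singular-value inequalities in \eqref{bm:link} tight.

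The main obstacle I anticipate is the bookkeeping in the variance regime: simultaneously (i) respecting the two-sided inequality defining $\Opwd$ so that the perturbed operator is genuinely in the class, (ii) keeping each $\sol_\theta$ inside $\cF_\sw^\sr$ with a radius that does not shrink with $n$, and (iii) getting the per-coordinate KL bound to come out as a constant rather than growing — it is step (iii) where Assumption \ref{ass:reg:II} is indispensable and where I expect the precise form of $\kappa$ to be forced. Everything else (the reduction to a Gaussian likelihood, the two-point lemma, the Assouad summation) is routine once the calibration constants are pinned down.
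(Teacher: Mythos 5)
In the \emph{bias regime} your two-point construction with a perturbation in the high-frequency coordinates $j>\mstarn$, coefficients proportional to $[\rep]_j/\sw_j$, is essentially what the paper does (their $\widetilde\sol_*$), and your use of Gaussian residuals $U_\theta$ built from the structural discrepancy to control the likelihood ratio matches the paper. The gap is in the \emph{variance regime}. You propose an Assouad-type many-point argument on $\{0,1\}^{\mstarn}$ and then claim the lower bound ``sums over $j$'' because $\ell_\rep$ is additive over coordinates. But the object being lower-bounded is the scalar loss $|\breve\ell-\ell_\rep(\sol_\theta)|^2$, which does \emph{not} decompose coordinatewise; Assouad's lemma needs an additive semi-metric on the parameter vector (Hamming distance, $L^2$ norm squared), and a single scalar functional gives you only one number. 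If you compare two vertices at Hamming distance one, each two-point test gives a lower bound of order $\gamma_j^2[\rep]_j^2$, but these do not accumulate to a sum over $j$ --- you would end up with a $\max_j$, which is too small by a factor of order $\mstarn$. (One can rescue a per-coordinate argument via a Bayes/posterior-variance reduction when the coordinates are posterior-independent, e.g.\ for the diagonal $\op$ you suggest fixing, but you do not invoke this and it is not what ``a direct per-coordinate argument'' will give you by default.)

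The paper avoids this entirely by using a \emph{single} two-point argument in the variance regime as well: the perturbation is placed along the one hardest direction
$\sol_* \propto \sum_{j=1}^{\mstarn}[\rep]_j\Opw_j^{-1}\basZ_j$,
i.e.\ the $\Opw^{-1}$-weighted gradient of the functional restricted to the first $\mstarn$ coordinates, with the scale chosen so that the Hellinger distance between $P_1$ and $P_{-1}$ stays at most~$1$ while $\sol_*\in\cF_\sw^\sr$; both constraints simultaneously produce the $\min(1/(2\Opd),\sr)$ factor and the $\astarn$, and Assumption~\ref{ass:reg:II} ($\kappa$) is what keeps the Hellinger bound uniform in $n$ --- roughly the role you correctly anticipated for it. This ``hardest one-dimensional subproblem'' is the standard and correct reduction for linear-functional estimation; the paper's own Remark right after Theorem~\ref{res:lower} contrasts it with Chen--Rei\ss, where the multi-point construction you describe \emph{is} needed, precisely because there the loss is an $L^2$ norm and does decompose. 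Finally, note that the inner infimum over $\op$ in the statement means the bound must hold for \emph{every} fixed $T\in\Opwd$; the paper's argument works verbatim for arbitrary such $T$ using only the two-sided inequalities in~\eqref{bm:link}, so you should not actually fix the diagonal $T$ --- that would prove a weaker (existential) statement than what is claimed.
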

\begin{rem}
In the proof of the lower bound we consider a test problem based on two hypothetical structural functions. For each test function the condition $\sigma^4\geq\big(\sqrt 3+4\sr \,\eta^2\sum_{j\geq 1}\sw_j^{-1}\big)^2$ ensures a certain complexity of the hypothetical model in a sense that it allows for Gaussian residuals.
  This specific case is only needed to simplify the calculation of the distance between distributions corresponding to
  different structural functions. A similar assumption has been used by \cite{ChenReiss2008} in order to derive a lower bound
for the estimation of the structural function $\sol$ itself. In particular, the authors show that in opposite to the present work an one-dimensional subproblem is not sufficient to
describe the full difficulty in estimating $\sol$.

On the other hand, below we derive an upper bound assuming that $P_{U|W}$ belongs to $\cU_\sigma$ and that the joint distribution of
  $(Z,W)$ fulfills in addition Assumption \ref{ass:A1}. Obviously in this situation Theorem \ref{res:lower} provides a lower bound for any estimator as long as $\sigma$ is sufficiently
  large. 
\hfill$\square$\end{rem}

\begin{rem}
The regularity conditions imposed on the structural function $\sol$ and the conditional expectation operator $T$ involve only the
  basis $\{\basZ_j\}_{j\geq1}$ in $L^2_Z$. Therefore, the lower bound derived in Theorem \ref{res:lower} does not capture the influence of the basis $\{\basW_l\}_{l\geq1}$ in
  $L^2_W$ used below to construct an  estimator  of the value $\ell_\rep(\sol)$. In other words, this estimator attains the lower bound only if $\{\basW_l\}_{l\geq1}$ is
  chosen appropriately. 
\hfill$\square$\end{rem}

\begin{rem}
The rate $\drep$ of the lower bound is never faster than the parametric rate, that is, $\drepn\geq n^{-1}$.
 Moreover, it is easily seen that the lower bound rate is
  parametric if and only if
$\sum_{j\geq1}[\rep]_j^2\Opw_j^{-1}<\infty$.
 This condition does not involve the
  sequence $\sw$ and hence, attaining a parametric rate is independent of the regularity conditions we are willing to impose on the structural function.
  \hfill$\square$\end{rem}

 The following assertion gives a lower bound over the ellipsoid $\cF_\rw^{\rr}$ of representer. Consider the function $\rep^*:=\rr\rw_{j^*}^{-1/2}e_{j^*}$ with $j^*:={\argmax}_{1\leq j\leq \mstarn}\{ (\rw_j\Opw_j)^{-1} \}$ which obviously belongs to $\cF_\rw^{\rr}$. Corollary \ref{res:coro:lower} follows then by calculating the value of the lower bound in Theorem \ref{res:lower} for the specific representer $\rep^*$ and, hence we omit its proof.

\begin{coro}\label{res:coro:lower} Let  the assumptions of Theorem \ref{res:lower} be satisfied. Then for all $n\geq 1$  we have 
\begin{equation*}    \inf_{\breve\ell}\inf_{\op\in\Opwd}\sup_{P_{U|W}\in \cU_\sigma} \sup_{\sol \in\cF_\sw^{\sr},\,\rep \in\cF_\rw^{\rr}}  \Ex|\breve{\ell}-\ell_\rep(\sol)|^2\geqslant
  \frac{\tau\kappa}{4} \,\min \bigg( \frac{1}{2\,\Opd}\,,\,  \sr\bigg)\,\drwn
\end{equation*}
where the first infimum runs over all possible estimators $\breve\ell$.
\end{coro}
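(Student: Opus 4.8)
The plan is to derive Corollary~\ref{res:coro:lower} as an immediate specialization of Theorem~\ref{res:lower} to the single extremal representer $\rep^*:=\rr\,\rw_{j^*}^{-1/2}e_{j^*}$, where $j^*:=\argmax_{1\le j\le\mstarn}\set{(\rw_j\Opw_j)^{-1}}$. First I would check that $\rep^*$ is admissible: it has a single nonvanishing coefficient $[\rep^*]_{j^*}$ in the basis $\{\basZ_j\}_{j\geq1}$, so $\tau:=\normV{\rep^*}_\rw^2=\rw_{j^*}[\rep^*]_{j^*}^2$ is a fixed power of the radius $\rr$ and hence $\rep^*\in\cF_\rw^{\rr}$; moreover $\inf_{j\geq1}\{\rw_j\sw_j\}>0$ from Assumption~\ref{ass:reg} gives $\rep^*\in\cF_{1/\sw}$, so that $\ell_{\rep^*}(\phi)=\skalarV{\rep^*,\phi}_Z$ is a well-defined continuous linear functional on $\cF_\sw$. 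Since $\rep^*\in\cF_\rw^{\rr}$, the supremum over $\rep\in\cF_\rw^{\rr}$ in the corollary is at least the same expression with $\rep$ frozen at $\rep^*$; and none of the remaining hypotheses of Theorem~\ref{res:lower} — Assumptions~\ref{ass:reg} and~\ref{ass:reg:II}, the moment bound $\sup_{j\geq1}\Ex[\basZ_j^4(Z)|W]\leq\eta^4$, and $\sigma^4\geq\big(\sqrt 3+4\sr\,\eta^2\sum_{j\geq1}\sw_j^{-1}\big)^2$ — involves the representer. Therefore Theorem~\ref{res:lower} applies verbatim with the fixed representer $\rep^*$ and bounds the left-hand side of the corollary from below by $\frac{\kappa}{4}\min\big(\frac{1}{2\,\Opd},\,\sr\big)$ times the rate \eqref{rate:known:operator} evaluated at $\rep=\rep^*$.

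It then only remains to evaluate that rate. By the very definition of $j^*$ we have $j^*\leq\mstarn$, so the tail contribution $\sum_{j>\mstarn}[\rep^*]_j^2/\sw_j$ in \eqref{rate:known:operator} vanishes, while in the leading part $\astarn\sum_{j=1}^{\mstarn}[\rep^*]_j^2/\Opw_j$ only the summand $j=j^*$ survives. Hence the rate at $\rep^*$ equals
\begin{equation*}
\astarn\,\frac{[\rep^*]_{j^*}^2}{\Opw_{j^*}}=\tau\,\astarn\,\frac{1}{\rw_{j^*}\Opw_{j^*}}=\tau\,\astarn\max_{1\leq j\leq\mstarn}\set{\frac{1}{\rw_j\Opw_j}}=\tau\,\drwn ,
\end{equation*}
where the third equality is precisely the maximising property of $j^*$ and the last one the definition of $\drwn$ in \eqref{rate:known:operator}. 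Combining this with the preceding bound yields the asserted inequality with constant $\frac{\tau\kappa}{4}\min\big(\frac{1}{2\,\Opd},\,\sr\big)$.

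There is no genuinely hard step here: the argument is a one-line enlargement of the supremum followed by an elementary reading-off of \eqref{rate:known:operator}. The only points deserving a moment's care are the bookkeeping that $j^*\in\{1,\dots,\mstarn\}$ — so that the tail sum in the rate drops out entirely and the maximum defining $\drwn$ is attained exactly at $j^*$ — and the observation that every side condition of Theorem~\ref{res:lower} is independent of $\rep$, which is what legitimises quoting that theorem for the single representer $\rep^*$. This is why Corollary~\ref{res:coro:lower} can be stated without a separate proof.
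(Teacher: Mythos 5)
Your proof is correct and is exactly the argument the paper sketches: choose the extremal representer $\rep^*=\rr\rw_{j^*}^{-1/2}e_{j^*}$ with $j^*:=\argmax_{1\leq j\leq\mstarn}\{(\rw_j\Opw_j)^{-1}\}$, observe it lies in $\cF_\rw^{\rr}$, restrict the supremum over $\rep$ to this single point, and read off $\drepn$ from \eqref{rate:known:operator}, noting the tail sum vanishes because $j^*\leq\mstarn$ and only the $j=j^*$ term survives in the head. Your extra bookkeeping — that $\rep^*\in\cF_{1/\sw}$ by $\inf_j\{\rw_j\sw_j\}>0$ and that no hypothesis of Theorem~\ref{res:lower} depends on the representer — is the same justification the paper leaves implicit, so the approach is identical.
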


\begin{rem}
If the lower bound given in  Corollary \ref{res:coro:lower} tends to zero then $(\rw_{j}\sw_{j})_{j\geq 1}$ is a divergent sequence. In other words, without any additional restriction on $\sol$, that is, $\sw\equiv 1$,  consistency of an estimator of $\ell_\rep(\sol)$ uniformly over all $\sol\in \cF_\sw^\sr$ and all $\rep\in\cF_\rw^\rr$ is only possible under restrictions
  on the representer $\rep$ in the sense that $\rw$ has to be a divergent sequence. This obviously reflects the ill-posedness of the underlying inverse problem. 
 \hfill$\square$\end{rem}

 \section{Minimax optimal estimation.}\label{sec:gen}
\subsection{Estimation by dimension reduction and thresholding.}
In addition to the basis $\{e_j\}_{j\geq 1}$ in  $L_Z^2$ used to establish the lower bound  we consider now also a second basis $\{f_l\}_{l\geq 1}$ in $L_W^2$. 
\paragraph{Matrix and operator notations.} Given $m\geq 1$, $\cE_m$ and $\cF_m$ denote the subspace of $L^2_Z$ and $L^2_W$ spanned by the functions $\{e_j\}_{j=1}^m$ and
$\{f_l\}_{l=1}^m$ respectively. $E_m$ and $E_m^\perp$ (resp. $F_m$ and $F_m^\perp$) denote the orthogonal projections on $\cE_m$ (resp. $\cF_m$) and its orthogonal complement
$\cE_m^\perp$ (resp. $\cF_m^\perp$), respectively. Given an operator $K$ from $L^2_Z$ to $L^2_W$ we denote its inverse by $K^{-1}$ and its adjoint by
$K^*$.   
If we restrict $F_m K E_m$ to an operator from $\cE_m$ to $\cF_m$, then it can be represented by a matrix $[K]_{\um}$ with entries $[K]_{l,j}=\skalarV{Ke_j,f_l}_W$ for $1\leq j,l\leq m$. Its spectral norm is denoted by  $\normV{[K]_{\um}}$, its inverse   by $[K]_{\um}^{-1}$ and its transposed by $[K]_{\um}^t$. We write $I$ for the identity operator
and $\Diag_\opw$ for the diagonal operator with singular value decomposition $\{v_j,\basZ_j,\basW_j\}_{j\geq1}$.  Respectively, given functions $\phi\in L^2_Z$ and  $\psi\in L^2_W$ we define by $[\phi]_{\um}$ and $[\psi]_{\um}$ $m$-dimensional vectors with entries 
$[\phi]_{j}=\skalarV{\phi,e_j}_Z$ and $[\psi]_{l}=\skalarV{\psi,f_l}_W$ for $1\leq j,l\leq m$.

Consider the conditional expectation operator $T$ associated with $(Z,W)$. If $[\basZ(Z)]_\um$ and $[\basW(W)]_\um$ denote random vectors with entries $\basZ_j(Z)$ and $\basW_j(W)$, $1\leq j\leq m$, respectively, then it holds $[T]_{\um}=\Ex [\basW(W)]_{\um}[\basZ(Z)]_{\um}^t$. Throughout the paper $[T]_{\um}$ is assumed  to be nonsingular for all $m\geq 1$, so that $[T]_{\um}^{-1}$ always exists. Note that it is a nontrivial problem
to determine when such an assumption holds (cf. \cite{EfromovichKoltchinskii2001} and references therein).

\paragraph{Definition of the estimator.}Let $(Y_1,Z_1,W_1),\dotsc,(Y_n,Z_n,W_n)$ be an iid. sample of $(Y,Z,W)$. 
Since $[T]_{\um}=\Ex [\basW(W)]_{\um}[\basZ(Z)]_{\um}^t$ and $[g]_{\um}=\Ex{Y[\basW(W)]_{\um}}$ we construct estimators by using their empirical counterparts, that is,
\begin{equation*}
[\widehat{T}]_{\um}:= \frac{1}{n}\sum_{i=1}^n [\basW(W_i)]_{\um}[\basZ(Z_i)]_{\um}^t  \quad\mbox{ and }\quad[\widehat{g}]_{\um}:= \frac{1}{n}\sum_{i=1}^n Y_i[\basW(W_i)]_{\um}.
\end{equation*}
Then the  estimator of the linear functional $\ell_\rep(\sol)$ is defined for all $m\geq 1$ by
\begin{equation}\label{gen:def:est}
\hell_m:=
\left\{\begin{array}{lcl} 
[\rep]_{\um}^t[\hop]_{\um}^{-1} [\widehat{g}]_{\um}, && \mbox{if $[\hop]_{\um}$ is nonsingular and }\normV{[\hop]^{-1}_{\um}}\leq \sqrt{n},\\
0,&&\mbox{otherwise}.
\end{array}\right.
\end{equation}
In fact, the estimator $\hell_m$ is obtained from the linear functional $\ell_\rep(\sol)$ by replacing the unknown structural function $\sol$ by an estimator proposed by \cite{Johannes2009}. 
\begin{rem}
If $Z$ is continuously distributed one might be also interested in estimating the value $\int_{\cZ} \sol(z)\rep(z)dz$ where $\cZ$ is the support of $Z$. Assume that this integral and also $\int_{\cZ} \rep(z) e_j(z)dz$ for $1\leq j\leq m$ are well defined. Then we can cover the problem of estimating $\int_{\cZ} \sol(z)\rep(z)dz$ by simply replacing $[\rep]_{\um}$ in the definition of $\hell_m$ by a $m$-dimensional vector with entries $\int_{\cZ} \rep(z) e_j(z)dz$ for $1\leq j\leq m$. Hence for  $\int_{\cZ} \sol(z)\rep(z)dz$ the results below follow mutatis mutandis. \hfill$\square$
\end{rem}

\paragraph{Moment assumptions.}
 
Besides the link condition \eqref{bm:link} for the conditional expectation operator $T$  we need moment conditions  on the basis, more specific, on  the  random variables $e_j(Z
)$ and $f_l(W)$ for $j,l\geq 1$, which we summarize in the next assumption. 
\begin{assA}\label{ass:A1}
There exists $\eta\geqslant 1$ such that the joint distribution of $(Z,W)$ satisfies  
\begin{itemize}
\item[(i)] $\sup_{j\in \N}\Ex [e_j^2(Z)|W]\leqslant \eta^2$ and $\sup_{l\in \N}\Ex [f_l^4(W)]\leqslant \eta^4$;
\item[(ii)] $\sup_{j,l\in\N} \Ex| e_j(Z)f_l(W)- \Ex [e_j(Z)f_l(W)]|^k\leqslant \eta^{k} k!$, $k=3,4,\dotsc$.
\end{itemize}
\end{assA}
Note that condition $(ii)$  is also known as Cramer's condition, which is sufficient to obtain an exponential bound for large deviations of the centered random variable $e_j(Z)f_l(W)- \Ex [e_j(Z)f_l(W)]$ (c.f. \cite{Bosq1998}). Moreover, any joint distribution  of $(Z,W)$ satisfies Assumption \ref{ass:A1} for sufficiently large $\eta$   if  the basis $\{e_j\}_{j\geq1}$ and $\{f_l\}_{l\geq1}$ are uniformly bounded, which holds, for example, for the trigonometric basis considered in Subsection \ref{sec:sob}.
 
\subsection{Consistency.}
 The next assertion summarizes  sufficient conditions to ensure consistency of the estimator ${\hell_m}$ introduced  in (\ref{gen:def:est}). Let $\sol_m\in\mathcal E_m$ with $[\sol_m]_{\um}=[\op]_{\um}^{-1}[g]_{\um}$. Obviously, up to the threshold, the estimator $\hell_m$ is the empirical counterpart of $\ell_\rep(\sol_m)$.
In Proposition \ref{res:gen:prop:cons} consistency of the estimator $\hell_m$ is only obtained under the condition 
\begin{equation}\label{eq:cons:solm}
 \|\sol-\sol_m\|_\sw=o(1) \text{ as }m\to\infty
\end{equation}
which does not hold true in general. Obviously \eqref{eq:cons:solm} implies the convergence of $\ell_h(\sol_m)$ to $\ell_h(\sol)$ as $m$ tends to infinity for all $\rep\in\cF_{1/\sw}$.
  \begin{prop}\label{res:gen:prop:cons} Assume an iid. $n$-sample of $(Y,Z,W)$ from the model (\ref{model:NP}--\ref{model:NP2})  with  $P_{U|W} \in \cU_\sigma$ and   joint
  distribution of $(Z,W)$ fulfilling Assumption \ref{ass:A1}. Let the dimension  parameter $m_n$ satisfy $m_n^{-1}=o(1)$, $m_n=o(n)$,
  \begin{equation}\label{eq:5}
\normV{[\rep]_{\umn}^t [T]_{\umn}^{-1}}^2=o(n),\mbox{ and }
 m_n^3\normV{[\Op]_{\umn}^{-1}}^2=O(n)\text{ as }n\to\infty.
  \end{equation}
 If \eqref{eq:cons:solm} holds true  then 
 $\Ex|\hell_{m_n}-{\ell_\rep(\sol)}|^2=o(1)$ as  $n\to\infty$ for all $\sol\in\cF_\sw$ and $\rep\in\cF_{1/\sw}$. 
  \end{prop}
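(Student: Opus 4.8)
The plan is to decompose the error $\hell_{m_n}-\ell_\rep(\sol)$ into a bias term $\ell_\rep(\sol_{m_n})-\ell_\rep(\sol)$, which is controlled by the approximation condition \eqref{eq:cons:solm}, and a stochastic term $\hell_{m_n}-\ell_\rep(\sol_{m_n})$ on the event where the threshold in \eqref{gen:def:est} is active, plus a remainder on the complementary event where $\hell_{m_n}=0$. For the bias, since $\rep\in\cF_{1/\sw}$ and $\|\sol-\sol_{m_n}\|_\sw=o(1)$, Cauchy--Schwarz in the weighted pairing gives $|\ell_\rep(\sol_{m_n})-\ell_\rep(\sol)|=|\skalarV{\rep,\sol-\sol_{m_n}}_Z|\leq \normV{\rep}_{1/\sw}\normV{\sol-\sol_{m_n}}_\sw=o(1)$, so this term vanishes (this also uses that $[\sol_{m_n}]_{\umn}=[\op]_{\umn}^{-1}[g]_{\umn}$ is the Galerkin projection in the right sense). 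First I would establish this, together with a uniform bound $\normV{\sol}_\sw^2\le \sr$-type control to make the $o(1)$ uniform, though for the stated conclusion pointwise-in-$(\sol,\rep)$ convergence suffices.

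Next I would handle the stochastic term. On the event $\Omega_n:=\{[\hop]_{\umn}\text{ nonsingular},\ \normV{[\hop]_{\umn}^{-1}}\le\sqrt n\}$, one writes $\hell_{m_n}-\ell_\rep(\sol_{m_n})=[\rep]_{\umn}^t\big([\hop]_{\umn}^{-1}[\hat g]_{\umn}-[\op]_{\umn}^{-1}[g]_{\umn}\big)$. Using the standard resolvent-type identity $[\hop]_{\umn}^{-1}[\hat g]_{\umn}-[\op]_{\umn}^{-1}[g]_{\umn}=[\hop]_{\umn}^{-1}\big([\hat g]_{\umn}-[\hop]_{\umn}[\sol_{m_n}]_{\umn}\big)$ together with $[g]_{\umn}=[\op]_{\umn}[\sol_{m_n}]_{\umn}$ (by definition of $\sol_{m_n}$), one decomposes $[\hat g]_{\umn}-[\hop]_{\umn}[\sol_{m_n}]_{\umn}$ into a "noise" part $\tfrac1n\sum_i U_i[\basW(W_i)]_{\umn}$ (mean zero given the $W_i$, whose variance is controlled by $\sigma$ and Assumption \ref{ass:A1}(i)) and a "residual operator" part involving $[\hop]_{\umn}-[\op]_{\umn}$ acting on $\sol-\sol_{m_n}$ (plus the part of $\sol$ outside $\cE_{m_n}$). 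The $L^2$-norm of each piece I would bound via: the threshold giving $\normV{[\hop]_{\umn}^{-1}}\le\sqrt n$; the hypothesis $\normV{[\rep]_{\umn}^t[\op]_{\umn}^{-1}}^2=o(n)$ combined with a bound $\normV{[\hop]_{\umn}^{-1}-[\op]_{\umn}^{-1}}\lesssim\normV{[\op]_{\umn}^{-1}}^2\normV{[\hop]_{\umn}-[\op]_{\umn}}$ on $\Omega_n$; and a concentration bound for $\normV{[\hop]_{\umn}-[\op]_{\umn}}$ of order $\sqrt{m_n^2/n}$ or so, which, fed through $m_n^3\normV{[\op]_{\umn}^{-1}}^2=O(n)$, keeps everything $o(1)$. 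Here Assumption \ref{ass:A1}(ii) (Cramér's condition) supplies the exponential/Bernstein deviation inequality for the entrywise deviations of $[\hop]_{\umn}$, from which a spectral-norm bound follows by summing over the $m_n^2$ entries.

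Finally, on $\Omega_n^c$ the estimator is $0$, so the contribution is $|\ell_\rep(\sol)|^2\Pr(\Omega_n^c)$, and I would show $\Pr(\Omega_n^c)=o(1)$: since $m_n^3\normV{[\op]_{\umn}^{-1}}^2=O(n)$ forces $\normV{[\op]_{\umn}^{-1}}=o(\sqrt n)$, and $\normV{[\hop]_{\umn}-[\op]_{\umn}}\to0$ in probability at the rate above, a perturbation argument (Neumann series) shows $[\hop]_{\umn}$ is eventually invertible with $\normV{[\hop]_{\umn}^{-1}}\le\sqrt n$ with probability tending to one; moreover $\Pr(\Omega_n^c)$ can be made $o(n^{-1})$ using the exponential deviation bound, which even handles the cross term between $\{\hell_{m_n}=0\}$ and a possibly large $\ell_\rep(\sol)$. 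Combining the three pieces and taking expectations (splitting $\Ex|\cdot|^2=\Ex|\cdot|^2\mathbf 1_{\Omega_n}+\Ex|\cdot|^2\mathbf 1_{\Omega_n^c}$, and noting that on $\Omega_n$ all the quantities are deterministically bounded by the threshold so the moments are finite) yields $\Ex|\hell_{m_n}-\ell_\rep(\sol)|^2=o(1)$.

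\emph{Main obstacle.} The delicate point is the $L^2$ (not just in-probability) control of the stochastic term: one cannot simply say "on $\Omega_n$ things are small," because the event $\Omega_n$ and the random matrices are intertwined, and the quantity $\normV{[\hop]_{\umn}^{-1}}$, although bounded by $\sqrt n$ by construction, must be shown to be of the correct smaller order $\normV{[\op]_{\umn}^{-1}}(1+o_P(1))$ on a large event, while on the small "bad" event one falls back on the crude $\sqrt n$ bound and needs $\Pr(\text{bad})$ small enough (of order $o(m_n^{-1}n^{-1})$ or so) to absorb it — this is exactly where the Bernstein bound from Assumption \ref{ass:A1}(ii) and the budget $m_n^3\normV{[\op]_{\umn}^{-1}}^2=O(n)$ are used in tandem. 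Getting the bookkeeping of these competing rates right, so that the product of "probability of bad event" and "worst-case size of the estimator there" is $o(1)$, is the technical heart of the argument.
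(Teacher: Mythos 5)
Your proposal is correct and follows essentially the same route as the paper's proof: the same three-way split (bias via Cauchy--Schwarz in the $\sw$/$1/\sw$ pairing; stochastic error on the threshold event, further decomposed with a Neumann-series perturbation of $[\hop]_{\um}^{-1}$ around $[\op]_{\um}^{-1}$; and the complementary event contributing $|\ell_\rep(\sol)|^2 P(\Omega_{m_n}^c)$), with the same bookkeeping in which Bernstein's inequality from Assumption~\ref{ass:A1}(ii) tames the bad event where one must fall back on the crude $\sqrt n$ threshold bound. One small remark: since $|\ell_\rep(\sol)|\le\normV{\rep}_{1/\sw}\normV{\sol}_\sw$ is a fixed constant, the last term only needs $P(\Omega_{m_n}^c)=o(1)$; the stronger $o(n^{-1})$-type control is needed, as in the paper, for the cross terms carrying the factor $n$ inside the expectation, not for absorbing $|\ell_\rep(\sol)|$.
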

 Notice that  condition \eqref{eq:cons:solm} also involves the basis $\{\basW_l\}_{l\geq1}$ in $L^2_W$. In what follows, we introduce an alternative but stronger
  condition to guarantee \eqref{eq:cons:solm} which extends the link condition \eqref{bm:link}.  We denote by $\cTdDw$ for some $\tD\geq \td$ the subset of
  $\cTdw $ given by
\begin{equation}\label{bm:link:gen}
\cTdDw:=\Bigl\{ T\in \cTdw :\quad \sup_{m\in\N}\normV{[\Diag_{\tw}]^{1/2}_{\um}[T]^{-1}_{\um}}^2\leqslant \tD\Bigr\}.
\end{equation}
\begin{rem}\label{rem:cons}
 If  $T\in\cTdw $ and if in addition its singular value decomposition is given by $\{s_j,e_j, f_j\}_{j\geq1}$
then for all $m\geq 1$ the matrix $[\op]_{\um}$ is diagonalized with diagonal entries $[\op]_{j,j}=s_j$, $1\leq j\leq m$.
  In this situation it is easily seen that $\sup_{m\in\N}\normV{[\Diag_{\tw}]_{\um}^{1/2}[T]^{-1}_{\um}}^2\leqslant \td$ and,
  hence $T$ satisfies the extended link condition \eqref{bm:link:gen}, that is, $T\in \cTdDw$. Furthermore, it holds $\cTdw =\cTdDw$ for suitable $\tD>0$, if $T$ is a small perturbation of $\Diag_{\tw}^{1/2}$ or if $T$ is strictly positive (c.f.
  \cite{EfromovichKoltchinskii2001} or \cite{CardotJohannes2008}, respectively). \hfill$\square$
\end{rem}

\begin{rem}\label{rem:cons:2}
 Once both basis $\{\basZ_j\}_{j\geq1}$ and $\{\basW_l\}_{l\geq1}$ are specified the extended link condition \eqref{bm:link:gen} restricts the class of joint
  distributions of $(Z,W)$  such that \eqref{eq:cons:solm} holds true.
Moreover, under  \eqref{bm:link:gen} the estimator $\hsol_m$ of $\sol$ proposed by
\cite{Johannes2009} can attain the minimax optimal rate. In this
  sense, given a joint distribution of $(Z,W)$ a basis $\{\basW_l\}_{l\geq1}$ satisfying condition \eqref{bm:link:gen} can be interpreted as optimal instruments (c.f. \cite{N90econometrica}).\hfill$\square$
\end{rem}
  
  \begin{rem}
  For each
  pre-specified basis $\{\basZ_j\}_{j\geq1}$ we can theoretically construct a basis $\{\basW_l\}_{l\geq1}$ such that \eqref{bm:link:gen} is equivalent to 
  the link condition \eqref{bm:link}. To be more precise, if $T\in\cTdw $, which involves only the basis $\{\basZ_j\}_{j\geq1}$, then the fundamental
  inequality of \cite{Heinz51} implies $ \normV{ (T^*T)^{-1/2} \basZ_j}_Z^2\leq d \tw_j^{-1}$. Thereby, the function $(T^*T)^{-1/2} \basZ_j$ is an element of
  $L^2_Z$ and, hence $\basW_j:= T (T^*T)^{-1/2} \basZ_j$, $j\geq1$, belongs to $L^2_W$. Then it is easily checked that $\{\basW_l\}_{l\geq1}$ is
  a basis of the closure of the range of $T$ which may be completed to a basis of $L^2_W$.
  Obviously $[T]_{\um}$ is symmetric and moreover, strictly positive since $\skalarV{ \op\basZ_j,\basW_l}_W=\skalarV{ (\op^*\op)^{1/2}\basZ_j, \basZ_l}_Z $ for all $j,l\geq 1$. Thereby, we can apply Lemma A.3 in \cite{CardotJohannes2008}
  which gives $\cTdw= \cTdDw$ for sufficiently large $D$. We are currently exploring the  data driven choice of the basis  $\{\basW_l\}_{l\geq1}$.
 \hfill$\square$\end{rem}

Under the extended link condition \eqref{bm:link:gen}  the next assertion summarizes sufficient conditions to ensure consistency. 
 \begin{coro}\label{res:gen:coro:cons} The conclusion of Proposition \ref{res:gen:prop:cons} still holds true without imposing condition \eqref{eq:cons:solm}, if the sequence $\tw$ satisfies Assumption \ref{ass:reg}, the
   conditional expectation operator $\Op$ belongs to $\OpwdD$, and \eqref{eq:5} is substituted by
  \begin{equation}\label{eq:6}
\sum_{j=1}^{m_n}[\rep]_{j}^2\tw_j^{-1}=o(n)\quad\mbox{ and }\quad m_n^3=O(n\opw_{m_n})\quad\mbox{ as }n\to\infty.
  \end{equation}
\end{coro}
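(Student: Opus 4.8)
The plan is to derive Corollary \ref{res:gen:coro:cons} from Proposition \ref{res:gen:prop:cons} by showing that, under the stated hypotheses, (a) condition \eqref{eq:cons:solm} is automatically satisfied, and (b) the two displays in \eqref{eq:5} follow from the two displays in \eqref{eq:6}. For (a), I would use that $\sol\in\cF_\sw^\sr$ together with the extended link condition $\Op\in\OpwdD$. The key point is that $\sol_m$ is the solution of the finite-dimensional Galerkin equation $[\op]_{\um}[\sol_m]_{\um}=[g]_{\um}=[\op]_{\um}[\sol]_{\um}$ restricted to $\cE_m$, so $[\sol_m]_{\um}=[\sol]_{\um}+[\op]_{\um}^{-1}[\op]_{\um}^\perp[\sol]_{\um}$ where $[\op]_{\um}^\perp$ collects the columns of $\op$ acting on $\cE_m^\perp$; hence $\|\sol-\sol_m\|_\sw^2$ splits into the ``bias'' part $\sum_{j>m}\sw_j|[\sol]_j|^2$, which tends to zero because $\sol\in\cF_\sw^\sr$ and $\sw$ is nondecreasing, plus a ``remainder'' part controlled by $\normV{[\Diag_\sw]_{\um}^{1/2}[\op]_{\um}^{-1}}$. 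This is exactly where $\tD$ enters: the bound $\sup_m\normV{[\Diag_{\tw}]_{\um}^{1/2}[\op]_{\um}^{-1}}^2\le\tD$ from \eqref{bm:link:gen}, combined with the mapping property $\normV{[\op]_{\um}^\perp[\sol]_{\um}^\perp}_W^2\lesssim\|\sol\|_{\tw}^2\le$ (tail of an $\cF_\sw^\sr$-ellipsoid, using $\tw$ nonincreasing) lets me bound the remainder by a constant times the tail sum $\sum_{j>m}\sw_j|[\sol]_j|^2$ again, up to the ratio $\sw_j/\tw_j$ which must be handled carefully. So \eqref{eq:cons:solm} reduces to the convergence of an ellipsoid tail, which is immediate.

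For (b), the first display of \eqref{eq:5}, namely $\normV{[\rep]_{\umn}^t[T]_{\umn}^{-1}}^2=o(n)$, follows from the first display of \eqref{eq:6} by writing $\normV{[\rep]_{\um}^t[\op]_{\um}^{-1}}^2=\normV{[\rep]_{\um}^t[\Diag_\tw]_{\um}^{-1/2}\,[\Diag_\tw]_{\um}^{1/2}[\op]_{\um}^{-1}}^2\le\tD\,\normV{[\Diag_\tw]_{\um}^{-1/2}[\rep]_{\um}}^2=\tD\sum_{j=1}^{m_n}[\rep]_j^2\tw_j^{-1}$, again invoking \eqref{bm:link:gen}. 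The second display of \eqref{eq:5}, $m_n^3\normV{[\Op]_{\umn}^{-1}}^2=O(n)$, follows from $m_n^3=O(n\opw_{m_n})$ together with the link condition: from $\Op\in\cTdw$ one gets $\normV{[\op]_{\um}^{-1}}^2\le d\,\tw_m^{-1}$ (since $\normV{[\op]_{\um}\phi}_W^2\ge d^{-1}\|\phi\|_{1/\tw}^{-2}\cdots$ — more precisely using $\opw$ nonincreasing so that the smallest relevant weight is $\opw_m$), hence $m_n^3\normV{[\op]_{\umn}^{-1}}^2\le d\,m_n^3\opw_{m_n}^{-1}=O(n)$. Finally, $m_n^{-1}=o(1)$ and $m_n=o(n)$ are unchanged, so all hypotheses of Proposition \ref{res:gen:prop:cons} hold and its conclusion applies.

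The main obstacle I anticipate is part (a): controlling $\|\sol-\sol_m\|_\sw$ rather than the weaker $\|\sol-\sol_m\|_{L^2_Z}$ or $\|\sol-\sol_m\|_{1/\tw}$. The subtlety is that the Galerkin remainder $[\op]_{\um}^{-1}[\op]_{\um}^\perp[\sol]_{\um}^\perp$ lives in $\cE_m$ and one needs its $\sw$-norm, which brings in the factor $\normV{[\Diag_\sw]_{\um}^{1/2}[\op]_{\um}^{-1}}$; but \eqref{bm:link:gen} only provides $\normV{[\Diag_\tw]_{\um}^{1/2}[\op]_{\um}^{-1}}$, with $\tw$ (the \emph{operator} weight) in place of $\sw$ (the \emph{smoothness} weight). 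Bridging the two requires the interplay $|j|^3\sw_j^{-1}=o(1)$ from Assumption \ref{ass:reg} and the nonincreasing/nondecreasing monotonicities, and it is conceivable one instead argues directly: since on $\cE_m$ one has $\|\psi\|_\sw^2\le\sw_m\|\psi\|_{L^2_Z}^2$, reduce to bounding $\sw_m\|\sol-\sol_m\|_{L^2_Z}^2$, where the $L^2_Z$-rate of the Galerkin scheme is the classical $\|\sol-\sol_m\|_{L^2_Z}^2\lesssim (\tw_m/\sw_m)\|\sol\|_\sw^2$ (bias) and one needs $\sw_m\cdot(\text{bias})\to0$, i.e. $\tw_m\to0$, which is Assumption \ref{ass:reg}. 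I would present the argument in whichever of these two equivalent forms is cleanest; the $L^2_Z$-detour is probably shorter. Everything else is bookkeeping with the link conditions.
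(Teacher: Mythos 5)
Your plan for part (b) is exactly the paper's: the extended link condition $\Op\in\OpwdD$ gives $\normV{[\rep]_{\um}^t[\op]_{\um}^{-1}}^2\leq D\sum_{j=1}^m[\rep]_j^2\opw_j^{-1}$ and $\normV{[\op]_{\um}^{-1}}^2\leq D\opw_m^{-1}$ (your constant $d$ should be $D$, and this bound on the inverse really does require the \emph{extended} condition \eqref{bm:link:gen}, not merely $\cTdw$; the ordinary link condition controls nothing about $[\op]_\um^{-1}$). With those two displays, \eqref{eq:6} visibly implies \eqref{eq:5}.

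For part (a), the \textquotedblleft first form\textquotedblright\ you describe is the paper's argument: split $\sol-\sol_m=E_m^\perp\sol+(E_m\sol-\sol_m)$, use the Galerkin identity $[E_m\sol-\sol_m]_\um=-[\op]_\um^{-1}[\op E_m^\perp\sol]_\um$ together with \eqref{bm:link:gen} and the monotonicity of $\sw_j^{-1}\opw_j$ to arrive at $\normV{E_m\sol-\sol_m}_\sw^2\leq Dd\normV{E_m^\perp\sol}_\sw^2$, and then conclude by dominated convergence since $\normV{E_m^\perp\sol}_\sw^2=\sum_{j>m}\sw_j|[\sol]_j|^2\to 0$ for $\sol\in\cF_\sw$. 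The weight-bridging you worry about is handled precisely by the monotonicity of $\opw_j\sw_j^{-1}$, so it is not a real obstacle.

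The $L^2$-detour you prefer, however, contains a genuine error. The deterministic Galerkin approximation error in $L^2_Z$ is
\begin{equation*}
\normV{E_m\sol-\sol_m}_{L^2}^2\leq\normV{[\op]_\um^{-1}}^2\,d\,\normV{E_m^\perp\sol}_\opw^2\leq Dd\,\sw_m^{-1}\normV{E_m^\perp\sol}_\sw^2\leq Dd\,\sw_m^{-1}\normV{\sol}_\sw^2,
\end{equation*}
i.e.\ of order $\sw_m^{-1}$, \emph{not} of order $\opw_m\sw_m^{-1}$ as you claim. There is no $\opw_m$ gain: the factor $\opw_m^{-1}$ from $\normV{[\op]_\um^{-1}}^2$ exactly cancels the factor $\opw_m$ coming from $\normV{E_m^\perp\sol}_\opw^2\leq\opw_m\sw_m^{-1}\normV{E_m^\perp\sol}_\sw^2$. (The rate $\opw_m\sw_m^{-1}$ is the \emph{risk} of the estimator $\hsol_m$, which has a variance contribution; the deterministic bias is $\sw_m^{-1}$.) Consequently $\sw_m\cdot\normV{E_m\sol-\sol_m}_{L^2}^2\leq Dd\normV{\sol}_\sw^2=O(1)$, not $o(1)$, and the detour as you wrote it proves nothing. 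To make the detour work you must retain the tail: $\sw_m\normV{E_m\sol-\sol_m}_{L^2}^2\leq Dd\normV{E_m^\perp\sol}_\sw^2\to 0$, which is then exactly the paper's bound in $L^2$-disguise. So keep the tail term $\normV{E_m^\perp\sol}_\sw^2$ rather than the crude $\normV{\sol}_\sw^2$, and drop the appeal to $\opw_m\to 0$; it is irrelevant here.
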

\subsection{An upper bound.}
The last assertions show that the estimator $\hell_m$   defined in \eqref{gen:def:est}  is consistent for 
all structural functions  and  representers belonging to $\cF_\bw$ and $\cF_{1/\bw}$, respectively. The following theorem provides now an upper bound if $\sol$ belongs to an ellipsoid $\cF_\bw^\br$.  
In this situation the rate 
$\drep$ of the lower bound given  in Theorem \ref{res:lower} provides up to a constant also an upper bound of 
the  estimator $\hell_{\mstarn}$.  Thus we have proved that  the rate 
$\drep$
 is optimal and, hence  $\hell_{\mstarn}$   is  minimax optimal. 

\begin{theo}\label{res:upper}  Assume an iid. $n$-sample of $(Y,Z,W)$ from the model (\ref{model:NP}--\ref{model:NP2}) with     joint
  distribution of $(Z,W)$ fulfilling Assumption \ref{ass:A1}. Let  Assumptions \ref{ass:reg} and \ref{ass:reg:II} be satisfied.
 Suppose that the  dimension parameter  $\mstarn$  given by
\eqref{m:known:operator} satisfies
  \begin{equation}\label{minimax upper c1}
(\mstarn)^3 \max\set{|\log \drepn|, (\log \mstarn)}=o(\sw_{\mstarn} ),\quad\text{ as } n\to\infty,
\end{equation}
then we have for all $n\geq 1$ 
\begin{equation*}
\sup_{\op\in\OpwdD}\sup_{P_{U|W}\in\cU_\sigma}\sup_{\sol \in \cF_\sw^{\sr}} \Ex|\hell_{\mstarn}- \ell_\rep(\sol)|^2\leq \mathcal C \, \drepn
\end{equation*}
for a constant $\mathcal C>0$ only depending on the classes $\Fswsr$, $\cTdDw$, the constants $\sigma$, $\eta$ and the representer $h$.
\end{theo}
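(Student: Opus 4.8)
The plan is to combine a bias--variance decomposition with a Neumann (perturbation) expansion of $[\hop]_{\um}^{-1}$ around $[\op]_{\um}^{-1}$. Write $m:=\mstarn$, $a:=\astarn$, let $\sol_m\in\cE_m$ be the Galerkin solution $[\sol_m]_{\um}=[\op]_{\um}^{-1}[g]_{\um}$, and put $v_m:=[\op]_{\um}^{-t}[\rep]_{\um}$, so that $\ell_\rep(\sol_m)=[\rep]_{\um}^t[\sol_m]_{\um}$ and, away from the threshold, $\hell_m$ is the empirical version of $\ell_\rep(\sol_m)$. From $\Ex\absV{\hell_m-\ell_\rep(\sol)}^2\leq 2\absV{\ell_\rep(\sol_m)-\ell_\rep(\sol)}^2+2\,\Ex\absV{\hell_m-\ell_\rep(\sol_m)}^2$ I would bound the deterministic approximation error and the stochastic error separately, each by a constant multiple of $\drepn$. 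I would use repeatedly: \textbf{(a)} under \eqref{bm:link} and \eqref{bm:link:gen}, $\normV{[\Diag_\opw]_{\um}^{1/2}[\op]_{\um}^{-1}}^2\leq D$, whence $\normV{[\op]_{\um}^{-1}}^2\leq D\opw_m^{-1}$ and $\normV{v_m}^2\leq D\sum_{j\leq m}[\rep]_j^2\opw_j^{-1}$, while $\normV{[\op]_{\um}}^2\leq d$; \textbf{(b)} $\Ex\big([\basW(W)]_{\um}[\basW(W)]_{\um}^t\big)=I_m$ since $\{\basW_l\}$ is orthonormal in $L^2_W$; \textbf{(c)} Assumption \ref{ass:reg:II} forces $a\sim\opw_m/\sw_m\sim n^{-1}$, i.e.\ $n\,\opw_m\sim\sw_m$, so \eqref{minimax upper c1} reads $m^3\max\{\absV{\log\drepn},\log m\}=o(n\,\opw_m)$.

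For the approximation error I would use the normal equations $[g]_{\um}=[\op]_{\um}[E_m\sol]_{\um}+[F_m\op E_m^\perp\sol]_{\um}$ to obtain, identifying $\mathbb R^m$ with $\cE_m$, $\sol_m-\sol=-E_m^\perp\sol+[\op]_{\um}^{-1}[F_m\op E_m^\perp\sol]_{\um}$, and then pair with $\rep$: $\absV{\ell_\rep(\sol_m)-\ell_\rep(\sol)}\leq\absV{\skalarV{E_m^\perp\rep,E_m^\perp\sol}_Z}+\absV{v_m^t[F_m\op E_m^\perp\sol]_{\um}}$. Cauchy--Schwarz with the weights $\sw$ bounds the first term by $\sr^{1/2}\big(\sum_{j>m}[\rep]_j^2\sw_j^{-1}\big)^{1/2}$; for the second, inserting $[\Diag_\opw]_{\um}^{\pm1/2}$ and combining $\normV{[\Diag_\opw]_{\um}^{1/2}[\op]_{\um}^{-1}}^2\leq D$ with $\normV{\op E_m^\perp\sol}_W^2\leq d\normV{E_m^\perp\sol}_\opw^2\leq d(\opw_m/\sw_m)\sr\leq d\sr a$ (the middle step using that $(\opw_j/\sw_j)_j$ is nonincreasing, by Assumption \ref{ass:reg}) gives $\absV{v_m^t[F_m\op E_m^\perp\sol]_{\um}}^2\leq dD\sr\,a\sum_{j\leq m}[\rep]_j^2\opw_j^{-1}$. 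By the definition \eqref{rate:known:operator} of $\drepx$ both bounds are $\lesssim\drepn$.

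For the stochastic error I would set $\Delta:=[\hop]_{\um}-[\op]_{\um}$ and work on the event $\widetilde\Omega_m:=\{\normV{\Delta}\leq\rho_m\}$ with a deterministic $\rho_m\sim m\sqrt{(\log m)/n}$ chosen so that, by Bernstein's inequality under Cramer's condition \ref{ass:A1}(ii), $\mathbb P(\widetilde\Omega_m^c)$ is negligible and, by (c), $\rho_m\leq\tfrac12\normV{[\op]_{\um}^{-1}}^{-1}$ for $n$ large; then $\widetilde\Omega_m\subset\Omega_m$, $\normV{[\hop]_{\um}^{-1}}\leq 2\normV{[\op]_{\um}^{-1}}$, and $[\hop]_{\um}^{-1}=[\op]_{\um}^{-1}-[\op]_{\um}^{-1}\Delta[\hop]_{\um}^{-1}$. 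On $\Omega_m$, substituting $Y_i=\sol(Z_i)+U_i$ and $[\basZ(Z_i)]_{\um}^t[\sol_m]_{\um}=\sol_m(Z_i)$ gives $\hell_m-\ell_\rep(\sol_m)=[\rep]_{\um}^t[\hop]_{\um}^{-1}\big([\widehat\Xi]_{\um}+[\widehat\Psi]_{\um}\big)$ with $[\widehat\Xi]_{\um}:=\tfrac1n\sum_{i=1}^nU_i[\basW(W_i)]_{\um}$ and $[\widehat\Psi]_{\um}:=\tfrac1n\sum_{i=1}^n(\sol-\sol_m)(Z_i)[\basW(W_i)]_{\um}$, both centred (for $[\widehat\Psi]_{\um}$, $\Ex[(\sol-\sol_m)(Z)\basW_l(W)]=[\op(\sol-\sol_m)]_l=0$ for $l\leq m$, again by the normal equations). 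I would split $\Ex\absV{\hell_m-\ell_\rep(\sol_m)}^2$ over $\widetilde\Omega_m$, $\Omega_m\setminus\widetilde\Omega_m$ and $\Omega_m^c$. On $\widetilde\Omega_m$ the error equals $v_m^t[\widehat\Xi]_{\um}+v_m^t[\widehat\Psi]_{\um}-v_m^t\Delta[\hop]_{\um}^{-1}\big([\widehat\Xi]_{\um}+[\widehat\Psi]_{\um}\big)$. By (b) and $\Ex[U^2\mid W]\leq\sigma^2$, $\Ex\absV{v_m^t[\widehat\Xi]_{\um}}^2\leq\sigma^2n^{-1}\normV{v_m}^2\leq\sigma^2 D\,a\sum_{j\leq m}[\rep]_j^2\opw_j^{-1}\leq\sigma^2 D\drepn$; conditioning on $W$ with $\Ex[(\sol-\sol_m)^2(Z)\mid W]\leq\eta^2\normV{\sol-\sol_m}_{\ell^1}^2$ and $\normV{\sol-\sol_m}_{\ell^1}\lesssim 1$ (from $\sum_j\sw_j^{-1}<\infty$, $\normV{[\op]_{\um}^{-1}}^2\leq D\opw_m^{-1}$ and $m\,a\,\opw_m^{-1}\sim m/\sw_m\to0$ by (c)) gives likewise $\Ex\absV{v_m^t[\widehat\Psi]_{\um}}^2\lesssim n^{-1}\normV{v_m}^2\lesssim\drepn$; and $\Ex\big[\absV{v_m^t\Delta[\hop]_{\um}^{-1}([\widehat\Xi]_{\um}+[\widehat\Psi]_{\um})}^2\mathbf 1_{\widetilde\Omega_m}\big]\leq 4\normV{v_m}^2\rho_m^2\normV{[\op]_{\um}^{-1}}^2\,\Ex\normV{[\widehat\Xi]_{\um}+[\widehat\Psi]_{\um}}^2$, where $\Ex\normV{[\widehat\Xi]_{\um}+[\widehat\Psi]_{\um}}^2\lesssim m/n$ (again by (b), \ref{ass:A1}(i) and $\normV{\sol-\sol_m}_{\ell^1}\lesssim1$), so this last term is $\lesssim\normV{v_m}^2\opw_m^{-1}m^3(\log m)n^{-2}\lesssim\big(m^3(\log m)/(n\opw_m)\big)\drepn=o(\drepn)$ by (c). Finally, $\mathbb P(\Omega_m^c)\leq\mathbb P(\normV{\Delta}>\tfrac12\normV{[\op]_{\um}^{-1}}^{-1})$ and $\mathbb P(\Omega_m\setminus\widetilde\Omega_m)$ are controlled by Bernstein's inequality under \ref{ass:A1}(ii); since on $\Omega_m^c$ we have $\hell_m=0$ with $\absV{\ell_\rep(\sol_m)}^2\leq dD\sr\sum_{j\leq m}[\rep]_j^2\opw_j^{-1}$ (polynomial in $m,\opw_m^{-1}$), and on $\Omega_m\setminus\widetilde\Omega_m$ we have $\absV{\hell_m}^2\leq n\normV{[\rep]_{\um}}^2\normV{[\widehat g]_{\um}}^2$ with $\Ex\normV{[\widehat g]_{\um}}^4$ polynomial in $m,n$, both contributions are $o(\drepn)$ because (c) makes the Bernstein exponents dominate every polynomial and every $\absV{\log\drepn}$. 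Collecting the pieces yields $\Ex\absV{\hell_m-\ell_\rep(\sol_m)}^2\lesssim\drepn$, with a constant depending only on $\Fswsr$, $\cTdDw$, $\sigma$, $\eta$ and $h$.

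The hard part will be the stochastic error on $\widetilde\Omega_m$ together with the bad-event estimates: one must show that the factor $\normV{[\op]_{\um}^{-1}}^2\sim\opw_m^{-1}$ forced by inverting $[\op]_{\um}$ is absorbed by the concentration rate of $\Delta$, i.e.\ roughly that $m^3(\log m)/(n\,\opw_m)\to0$, and that $\mathbb P(\{[\hop]_{\um}\text{ singular or }\normV{[\hop]_{\um}^{-1}}>\sqrt n\})$ stays negligible after multiplication by the polynomial-in-$\opw_m^{-1}$ deterministic bound. This is exactly what Assumption \ref{ass:reg:II} (which gives $n\,\opw_{\mstarn}\sim\sw_{\mstarn}$) and the rate condition \eqref{minimax upper c1} (which gives $m^3=o(\sw_m)$, the logarithmic factors being reserved precisely for the large-deviation terms) are designed to deliver; carrying out the Bernstein/Cramer estimates for $\normV{\Delta}$ in spectral norm and tracking all constants uniformly over $\cF_\sw^\sr$ and $\OpwdD$ — in particular the uniform bound $\normV{\sol-\sol_m}_{\ell^1}\lesssim 1$ — will be the most laborious step.
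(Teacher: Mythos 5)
Your overall architecture — bias--variance split, Neumann expansion of $[\hop]_{\um}^{-1}$, Bernstein/Cramer control of $\Xi_m=\Delta$ — matches the paper's proof, and the treatment of the approximation error and of the ``good event'' stochastic terms is essentially correct. The genuine gap is in how you handle the bad events, in two related places.

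First, on $\Omega_m\setminus\widetilde\Omega_m$ you fall back on the uncentered bound $|\hell_m|^2\leq n\normV{[\rep]_{\um}}^2\normV{[\widehat g]_{\um}}^2$. Since $\Ex\normV{[\widehat g]_{\um}}^2$ is not small (it is of order a constant in $n$), the factor $n$ from the threshold survives, and you need $\mathbb P(\widetilde\Omega_m^c)$ to beat a whole power of $n$. But with the Bernstein/Cramer bound of Lemma \ref{pr:minimax:l1}, the best tail you can get is $\exp\big(-c\, n\opw_m/m^3\big)$ up to polynomial prefactors, and condition \eqref{minimax upper c1} only guarantees $n\opw_m/m^3\sim\sw_m/m^3\gg\max\{|\log\drepn|,\log m\}$, \emph{not} $\gg\log n$. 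In the $(pe)$ and $(ep)$ configurations of Remark \ref{rem:ill:cases}, $m$ grows only polylogarithmically in $n$, $|\log\drepn|\sim\log\log n$, and $\sw_m/m^3$ is a power of $\log n$ that need not exceed $\log n$; consequently $\mathbb P(\widetilde\Omega_m^c)$ is at best $\exp(-c(\log n)^\beta)$ with $\beta<1$, which does not kill the leading factor $n$. Your claim that ``(c) makes the Bernstein exponents dominate every polynomial'' is therefore false: the rate condition is tuned to dominate $|\log\drepn|$ and $\log m$, which is much weaker. The paper avoids this entirely: it keeps the \emph{centered} residual $V_m=[\widehat g]_{\um}-[\widehat T]_{\um}[\sol_m]_{\um}$ inside the Neumann remainder, so the bad-event term reads $n\normV{v_m}^2\normV{\Xi_m}^2\normV{V_m}^2\1_{\mho_m^c}$; using H\"older with the moments $\Ex\normV{\Xi_m}^8\lesssim (m^2/n)^4$ and $\Ex\normV{V_m}^4\lesssim(m/n)^2$ (Lemma \ref{app:gen:upper:l2}) the threshold factor $n$ is cancelled exactly, leaving $m^3\,\drepn\,\mathbb P(\mho_m^c)^{1/4}$, which is controlled by the polynomial-in-$m$ bound of Lemma \ref{pr:minimax:l2}.

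Second, and for the same reason, your choice $\rho_m\sim m\sqrt{(\log m)/n}$ is too small: the resulting Bernstein exponent is only $\sim\log m$, and $\mathbb P(\widetilde\Omega_m^c)$ is only \emph{polynomially} small in $m$. This does not beat the $m^3$ prefactor robustly, and certainly not any $n$--dependence. The correct choice is $\rho_m\sim 1/(\sqrt m\,\normV{[\op]_{\um}^{-1}})\sim\sqrt{\opw_m/m}$ (the paper's $\mho_m$), which at once gives $\normV{\Delta[\hop]_{\um}^{-1}}^2\lesssim 1/m$ on the good event (so the middle term is still $\lesssim n^{-1}\normV{v_m}^2$) and a Bernstein exponent $\sim n\opw_m/m^3$, which \eqref{minimax upper c1} is precisely designed to make large enough. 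Finally, your bound $|\ell_\rep(\sol_m)|^2\leq dD\sr\sum_{j\leq m}[\rep]_j^2\opw_j^{-1}\lesssim n\drepn$ on $\Omega_m^c$ has the same flaw (it reintroduces a factor $n$); the paper instead uses $|\ell_\rep(\sol)|^2\leq\normV{\rep}_{1/\sw}^2\normV{\sol}_\sw^2\lesssim 1$ by splitting off $\Omega_m^c$ \emph{before} centering, and then only needs $\mathbb P(\Omega_m^c)\lesssim\drepn$, which is exactly what Lemma \ref{pr:minimax:l2} provides. You could equivalently note that $|\ell_\rep(\sol_m)|\leq|\ell_\rep(\sol)|+|\ell_\rep(\sol_m-\sol)|\lesssim 1$ by Lemma \ref{app:gen:upper:l3}, but either way the $n$ must not appear.
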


The following assertion states an upper bound uniformly over the class $\cF_\rw^\rr$ of representer. Observe that $\|\rep\|_{1/\sw}^2\leq\rr$ and $\drepn\leq \rr \, \astar\,\max_{1\leq j \leq {\mstar}}\{(\rw_j \Opw_j)^{-1}\}=  \rr \,\drwn$ for all $\rep \in \cF_\rw^\rr$. 
Employing these estimates the proof of the next result follows line by line the proof of Theorem \ref{res:upper} and is thus omitted.
\begin{coro}\label{res:coro:upper} Let  the assumptions of Theorem \ref{res:upper} be satisfied where we substitute condition \eqref{minimax upper c1} by
$(\mstarn)^3 \max\set{|\log \drwn|, (\log \mstarn)}=o(\sw_{\mstarn} )$ as $ n\to\infty$.
Then we have  
\begin{equation*}
 \sup_{\op\in\OpwdD} \sup_{P_{U|W}\in\cU_\sigma} \sup_{\sol \in\cF_\sw^{\sr},\,\rep \in\cF_\rw^{\rr}}  \Ex|\hell_{\mstarn}- \ell_\rep(\sol)|^2 \leq  \mathcal C \, \drwn
  \end{equation*}
for a constant $\mathcal C>0$ only depending on the classes $\Fswsr$, $\Frwrr$, $\cTdDw$ and the constants $\sigma$, $\eta$.
\end{coro}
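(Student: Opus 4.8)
The plan is to run the proof of Theorem~\ref{res:upper} once more, this time letting the representer $\rep$ range over the ellipsoid $\cF_\rw^\rr$ and replacing, at each occurrence, every $\rep$-dependent quantity by a bound that does not depend on $\rep$. The whole reduction rests on the two elementary estimates recorded just before the statement: for every $\rep\in\cF_\rw^\rr$ one has $\normV{\rep}_{1/\sw}^2\leq\rr$, and, directly from the definition \eqref{rate:known:operator} of the two rates, $\drepn\leq\rr\,\drwn$; in particular each of the two pieces $\astar\sum_{j=1}^{\mstarn}[\rep]_j^2\Opw_j^{-1}$ and $\sum_{j>\mstarn}[\rep]_j^2\sw_j^{-1}$ making up $\drepn$ is at most $\rr\,\drwn$. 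Since Theorem~\ref{res:upper} already yields $\Ex|\hell_{\mstarn}-\ell_\rep(\sol)|^2\leq\mathcal C\,\drepn\leq\mathcal C\,\rr\,\drwn$ for each fixed $\rep$, the corollary follows once we check two things: that the constant $\mathcal C$ produced by that proof can be chosen independently of $\rep\in\cF_\rw^\rr$, and that the rate condition \eqref{minimax upper c1}, which contains $\rep$ through $\log\drepn$, holds uniformly over $\cF_\rw^\rr$ under the modified condition $(\mstarn)^3\max\set{|\log\drwn|,\log\mstarn}=o(\sw_{\mstarn})$.

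For the first point I would go through the three contributions into which the proof of Theorem~\ref{res:upper} splits the mean squared error. On the event where the threshold in \eqref{gen:def:est} is inactive one has a deterministic approximation term $|\ell_\rep(\sol)-\ell_\rep(\sol_{\mstarn})|^2$, which is bounded there---using $\sol\in\cF_\sw^\sr$ and the extended link condition $\op\in\OpwdD$ of \eqref{bm:link:gen}---by a constant multiple of $\sr\sum_{j>\mstarn}[\rep]_j^2\sw_j^{-1}\leq\sr\,\rr\,\drwn$, together with a centered stochastic term whose variance is bounded, after replacing $[\hop]$ by $[\op]$ on the high-probability event controlled by Assumption~\ref{ass:A1} and invoking $\op\in\OpwdD$, by a constant depending only on $\sigma$, $\eta$ and $\cTdDw$ times $\astar\sum_{j=1}^{\mstarn}[\rep]_j^2\Opw_j^{-1}\leq\rr\,\drwn$; in this last estimate the modified rate condition plays exactly the role that \eqref{minimax upper c1} plays in Theorem~\ref{res:upper}. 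On the complementary event $\hell_{\mstarn}=0$, the contribution equals $|\ell_\rep(\sol)|^2$ times the probability of that event, and here $|\ell_\rep(\sol)|^2\leq\normV{\rep}_{1/\sw}^2\normV{\sol}_\sw^2\leq\rr\,\sr$ is bounded while the probability decays faster than any power of $n$ by the Cram\'er-type Assumption~\ref{ass:A1}(ii) together with the modified condition; since $\drwn\geq\astar\geq n^{-1}$ this term is $o(\drwn)$. Summing the three pieces gives $\Ex|\hell_{\mstarn}-\ell_\rep(\sol)|^2\leq\mathcal C\,\drwn$ with $\mathcal C$ depending on $\rep$ only through $\rr$ and $\rw$, i.e.\ through $\Frwrr$. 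For the second point, $n^{-1}\leq\drepn\leq\rr\,\drwn$ forces $|\log\drepn|\leq\max\set{|\log(\rr\,\drwn)|,\log n}$, so the modified condition implies \eqref{minimax upper c1} uniformly over $\cF_\rw^\rr$ up to harmless constants, and thus every hypothesis of Theorem~\ref{res:upper} is met.

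The only step that requires genuine care---and the reason the paper omits the proof---is this bookkeeping: one must verify that throughout the argument of Theorem~\ref{res:upper} the representer enters \emph{solely} through $\normV{\rep}_{1/\sw}^2$, $\sum_{j=1}^{\mstarn}[\rep]_j^2\Opw_j^{-1}$ and $\sum_{j>\mstarn}[\rep]_j^2\sw_j^{-1}$, so that the uniform bounds $\normV{\rep}_{1/\sw}^2\leq\rr$ and $\drepn\leq\rr\,\drwn$ indeed remove all $\rep$-dependence except the one through $\Frwrr$. I do not expect any new analytic difficulty beyond this verification; in particular no supremum over the infinite-dimensional ellipsoid of representers has to be estimated beyond what is already contained in $\normV{\rep}_{1/\sw}^2\leq\rr$ and in the inequality $\drepn\leq\rr\,\drwn$ stated before the corollary.
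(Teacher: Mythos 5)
Your proposal is correct and follows the paper's intended argument: the paper explicitly says the proof of the corollary "follows line by line the proof of Theorem \ref{res:upper}" once one uses $\normV{\rep}_{1/\sw}^2\leq\rr$ and $\drepn\leq\rr\,\drwn$, and your middle paragraph does exactly that bookkeeping, correctly locating the three places where $\rep$ enters. One small soft spot: the claim near the end that the modified rate condition implies \eqref{minimax upper c1} uniformly over $\cF_\rw^\rr$ is not needed and is not quite right in general (e.g.\ in the infinitely smoothing case $\absV{\log\drwn}\sim\log\log n$ while $\absV{\log\drepn}$ can be of order $\log n$, so the implication can fail); what the argument actually requires, and what you already observe in the preceding sentences, is that one replaces $\drepn$ by $\drwn$ wherever \eqref{minimax upper c1} is invoked in Lemma \ref{pr:minimax:l2}, so that the quantity to be controlled is $(\drwn)^{-1}P(\Omega^c_{\mstarn})$ rather than $(\drepn)^{-1}P(\Omega^c_{\mstarn})$, and for that the modified condition is exactly what is needed and yields a threshold $n_0$ independent of $\rep$.
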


 \subsection{Illustration by classical smoothness assumptions.}\label{sec:sob}
Let us illustrate our general results by considering classical smoothness assumptions.
To simplify the presentation we follow  \cite{HallHorowitz2005}, and suppose that the marginal distribution of the scalar regressor $Z$ and the scalar instrument $W$ are uniformly distributed on the interval $[0,1]$. All the results below can be easily extended  to the multivariate case. In the univariate case, however,  both  Hilbert spaces $L^2_Z$ and $L^2_W$ equal $L^2[0,1]$.
Moreover, as a basis $\{e_j\}_{j\geq 1}$ in $L^2[0,1]$ we choose the trigonometric basis given by 
\begin{equation*}
e_{1}:\equiv1, \;e_{2j}(t):=\sqrt{2}\cos(2\pi j t),\; e_{2j+1}(t):=\sqrt{2}\sin(2\pi j t),t\in[0,1],\; j\in\N.\end{equation*}
In this subsection also the  second basis $\{f_l\}_{l\geq1}$ is given by the trigonometric basis. In this situation,  the moment conditions formalized in Assumption \ref{ass:A1} are automatically fulfilled.

Recall the typical choices of the sequences $\sw$, $\rw$, and $\opw$ introduced in Remark \ref{rem:ill:cases}. If  $\sw_j\sim |j|^{2p}$, $p>0$, as in case (pp) and (pe), then  
 $\cF_\sw$ coincides with the Sobolev space  of $p$-times differential periodic functions
(c.f. \cite{Neubauer1988,Neubauer88}). 
In case of (ep) it is well known that $\cF_{\sw}$ contains only analytic functions if $p>1$(c.f. \cite{Kawata1972}).
Furthermore, we consider two special cases describing a \lq\lq regular decay\rq\rq\ of the unknown singular values of $\op$.
In case of (pp) and (ep)  we consider a polynomial decay of the sequence $\tw$. Easy calculus shows  that  any operator $T$ satisfying  the link condition \eqref{bm:link} acts like integrating   $(a)$-times and hence is called {\it finitely smoothing} (c.f. \cite{Natterer84}). In case of (pe) we consider an exponential decay of $\opw$ and  it can  easily be seen that  $\op\in\cTdw$ implies $\cR(\op)\subset C^\infty[0,1]$, therefore  the operator $\op$ is called {\it infinitely smoothing} (c.f. \cite{Mair94}). 
In the next assertion we present the order of sequences $\drep$ and  $\drw$ which were shown to be minimax-optimal.

\begin{prop}\label{res:lower:sob}
Assume an iid. $n$-sample of $(Y,Z,W)$ from the model (\ref{model:NP}--\ref{model:NP2}) with $T\in\cTdDw$ and  $P_{U|W}\in\cU_\sigma$. Then for the example configurations of Remark \ref{rem:ill:cases} we obtain
\\[-4ex]
 \begin{enumerate}
 \item[(pp)] $\mstarn \sim n^{1/(2p+2a)}$ and
\begin{enumerate}\item[(i)]
$\drepn\sim 
\left\{\begin{array}{lcl} 
 n^{-(2p+2s-1)/(2p+2a)}, && \mbox{if $s-a<1/2$},\\
 n^{-1}\log n, && \mbox{if $s-a=1/2$},\\
 n^{-1}, && \mbox{otherwise,}
\end{array}\right.$
\item[(ii)]  $\drwn\sim \max(n^{-(p+s)/(p+a)},n^{-1})$.
\end{enumerate}
\item[(pe)] $\mstarn\sim \log(n(\log n)^{-p/a})^{1/(2a)}$ and
\begin{enumerate}\item[(i)]
$\drepn\sim (\log n)^{-(2p+2s-1)/(2a)}$,
\item[(ii)] 
$\drwn\sim(\log n)^{-(p+s)/a}$.
\end{enumerate}
\item[(ep)]  $\mstarn\sim \log(n(\log n)^{-a/p})^{1/(2p)}$
 and
\begin{enumerate}\item[(i)] 
$\drepn\sim
\left\{\begin{array}{lcl} 
 n^{-1}(\log n)^{(2a-2s+1)/(2p)}, && \mbox{if $s-a<1/2$},\\
 n^{-1}\log(\log n), && \mbox{if $s-a=1/2$},\\
 n^{-1}, && \mbox{otherwise,}
\end{array}\right.$
\item[(ii)]  
$\drwn\sim \max(n^{-1}(\log n)^{(a-s)/p},n^{-1})$.
\end{enumerate}
\end{enumerate}
\end{prop}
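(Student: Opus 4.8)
The plan is to obtain Proposition \ref{res:lower:sob} as a direct specialization of the general rate formulas in \eqref{rate:known:operator}, evaluated at $x=n$, for each of the three configurations (pp), (pe), (ep) listed in Remark \ref{rem:ill:cases}, together with both sub-cases (i) and (ii). Since $T\in\cTdDw$ implies $T\in\Opwd$, and the minimax optimality has already been established by Theorem \ref{res:lower} and Theorem \ref{res:upper}, nothing probabilistic remains: the entire proof is a deterministic asymptotic computation of $\mstarn$, $\astarn$, $\drepn$ and $\drwn$ from the definitions \eqref{m:known:operator} and \eqref{rate:known:operator}. I would first verify that Assumptions \ref{ass:reg} and \ref{ass:reg:II} hold in each case (this is essentially the content of Remark \ref{rem:ill:cases} plus a short check of \eqref{eta:known:operator}), so that the general results apply.

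The core step is computing $\mstarn=\argmin_m \{\max(r_m,x^{-1})/\min(r_m,x^{-1})\}$ with $r_m:=\Opw_m/\sw_m$ and $x=n$. Since $\sw$ is nondecreasing and $\Opw$ nonincreasing, $r_m$ is nonincreasing, so the minimizing $m$ is the one where $r_m$ is closest to $n^{-1}$ on a logarithmic scale, i.e. essentially the largest $m$ with $r_m\geq n^{-1}$, giving $\astarn\sim r_{\mstarn}\sim n^{-1}$ up to the discreteness gap. In case (pp), $r_m\sim |m|^{-2a-2p}$, so $r_m\sim n^{-1}$ yields $\mstarn\sim n^{1/(2p+2a)}$ and $\astarn\sim n^{-1}$; in case (pe), $r_m\sim\exp(-|m|^{2a})|m|^{-2p}$, so solving $\exp(-m^{2a})m^{-2p}\sim n^{-1}$ gives $m^{2a}\sim\log n - (p/a)\log\log n$, i.e. $\mstarn\sim(\log(n(\log n)^{-p/a}))^{1/(2a)}$; in case (ep), $r_m\sim\exp(-|m|^{2p})|m|^{2a}$ wait --- $\sw_j\sim\exp(|j|^{2p})$ and $\tw_j\sim|j|^{-2a}$, so $r_m\sim|m|^{-2a}\exp(-|m|^{2p})$, and the analogous computation gives $\mstarn\sim(\log(n(\log n)^{-a/p}))^{1/(2p)}$. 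Here I must be a little careful: in case (ep) one should double-check that $\astarn$ may actually be $n^{-1}$ rather than $r_{\mstarn}$, since the logarithmic factors in the final rates come precisely from the residual gap between $r_{\mstarn}$ and $n^{-1}$.

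Next I plug these into \eqref{rate:known:operator}. For the fixed-representer rate $\drepn=\max\{\astarn\sum_{j=1}^{\mstarn}[\rep]_j^2/\Opw_j,\ \sum_{j>\mstarn}[\rep]_j^2/\sw_j\}$, using $[\rep]_j^2\sim|j|^{-2s}$ (sub-case (i)), the first term is $\sim n^{-1}\sum_{j\leq\mstarn}|j|^{2a-2s}$ and the second is $\sim\sum_{j>\mstarn}|j|^{-2s-2p}$; the behaviour of the first sum splits according to whether $2a-2s>-1$, $=-1$, or $<-1$, i.e. the trichotomy $s-a<1/2$, $=1/2$, $>1/2$, which produces the three-line formulas. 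For (pp) one gets $n^{-1}(\mstarn)^{2a-2s+1}\sim n^{-(2p+2s-1)/(2p+2a)}$ in the generic case, $n^{-1}\log n$ in the boundary case, and $n^{-1}$ otherwise; for (pe) the polynomial-in-$\mstarn$ factor dominates and $\mstarn\sim(\log n)^{1/(2a)}$ gives $(\log n)^{-(2p+2s-1)/(2a)}$; for (ep) one gets $n^{-1}(\log n)^{(2a-2s+1)/(2p)}$, $n^{-1}\log\log n$, $n^{-1}$. For the uniform-over-$\cF_\rw^\rr$ rate $\drwn=\astarn\max_{j\leq\mstarn}\{(\rw_j\Opw_j)^{-1}\}$ with $\rw_j\sim|j|^{2s}$, the maximand is $\sim|j|^{-2s+2a}$, maximized at $j=1$ (if $s\geq a$) or at $j=\mstarn$ (if $s<a$), giving $\drwn\sim\max\{\astarn,\astarn(\mstarn)^{2a-2s}\}$; substituting $\astarn\sim n^{-1}$ and the values of $\mstarn$ yields the stated $\drwn$ in each case (e.g. $\max(n^{-(p+s)/(p+a)},n^{-1})$ for (pp)). The main obstacle is purely bookkeeping: tracking the discreteness of $\argmin$ so that the $\log$ and $\log\log$ factors (which arise from $(\mstarn)^{c}$ for bounded-but-unbounded $\mstarn$, and from the gap $\astarn$ versus $n^{-1}$) come out with exactly the right exponents, and handling the three sub-case boundaries uniformly across the three configurations. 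There is no conceptual difficulty; it is a careful but routine asymptotic calculus exercise applying Theorems \ref{res:lower} and \ref{res:upper}.
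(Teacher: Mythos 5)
Your proof follows the paper's route exactly: both specialize the formulas \eqref{m:known:operator} and \eqref{rate:known:operator} at $x=n$ and evaluate the resulting sums and maxima asymptotically for each of the three configurations, splitting on the sign of $2a-2s+1$ for $\drepn$ and on the sign of $s-a$ for $\drwn$. One small correction to your aside: in case (ep) the logarithmic factors come from the polynomial growth $\sum_{j\leq\mstarn}|j|^{2a-2s}\sim(\mstarn)^{2a-2s+1}$ with $\mstarn\sim(\log n)^{1/(2p)}$, not from a gap between $\Opw_{\mstarn}/\sw_{\mstarn}$ and $n^{-1}$ --- Assumption \ref{ass:reg:II} forces $\astarn\sim n^{-1}$ in all three configurations --- but this misattribution does not affect any of your stated rates, which all come out correctly.
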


\begin{rem}\label{rem:upper:sob:1}
 As we see from Proposition \ref{res:lower:sob}, if the value of $a$ increases the obtainable optimal rate of convergence decreases. Therefore, the parameter $a$ is often called {\it degree of ill-posedness} (c.f. \cite{Natterer84}).  On the other hand, an increasing of the value $p$ or $s$ leads to a faster optimal rate. Moreover, in the cases (pp) and (ep) the parametric rate $n^{-1}$ is obtained independent of the smoothness assumption imposed on the structural function $\sol$ (however, $p\geq 3/2$ is needed) if the representer  is smoother
 than the degree of ill-posedness of $\op$, i.e., \textit{(i)} $s\geqslant a-1/2$ and \textit{(ii)} $s\geq a$. 
Moreover, it is easily seen that if $[\rep]_j  \sim \exp(-|j|^{s})$ or $\rw_j  \sim \exp(|j|^{2s})$, $s>0$, then the minimax convergence rates are always parametric for any polynomial sequences $\sw$ and $\opw$.
\hfill$\square$\end{rem}

\begin{example}\label{exp:evaluation}
Suppose we are interested in estimating the value $\sol(t_0)$ of the structural function $\sol$ evaluated at a point $t_0\in[0,1]$. Consider the representer given by $ \rep_{t_0}=\sum_{j=1}^\infty e_j(t_0)e_j$. Let $\sol\in\cF_\sw$. Since $\sum_{j\geq1}\bw_j^{-1}<\infty$ (cf. Assumption~\ref{ass:reg}) it holds $\rep\in\cF_{1/\sw}$
and hence the point evaluation functional in $t_0\in[0,1]$, i.e.,  $\ell_{\rep_{t_0}}(\sol)=\sol(t_0)$, is well defined. 
In this case, the estimator $ \hell_m$  introduced in \eqref{gen:def:est} writes for all $m\geq 1$ as
\begin{equation*}
\hsol_m(t_0):=
\left\{\begin{array}{lcl} 
[e(t_0)]_{\um}^t[\hop]_{\um}^{-1} [\widehat{g}]_{\um}, && \mbox{if $[\hop]_{\um}$ is nonsingular and }\normV{[\hop]^{-1}_{\um}}\leq \sqrt n,\\
0,&&\mbox{otherwise}
\end{array}\right.
\end{equation*}
where $\hsol_m$ is an estimator proposed by \cite{Johannes2009}.
Let $p\geq 3/2$ and $a>0$. Then the estimator $\hsol_{\mstar}(t_0)$ attains within a constant the minimax optimal rate of convergence $\mathcal R^{\rep_{t_0}}$. Applying Proposition~\ref{res:lower:sob} gives
\begin{enumerate}
 \item[(pp)] $\drepnt\sim n^{-(2p-1)/(2p+2a)}$,
 \item[(ep)] $\drepnt\sim (\log n)^{-(2p-1)/(2a)}$,
 \item[(ep)] $\drepnt\sim n^{-1}(\log n)^{(2a+1)/(2p)}$.\hfill$\square$
\end{enumerate}

\end{example}

\begin{example}\label{exp:average}
We want to estimate the average value of the structural function $\sol$ over a certain interval $[0,b]$ with $0<b<1$. The linear functional of interest is given by $\ell_{\rep}(\sol)=\int_0^b \sol(t)dt$ with  representer $\rep:=\1_{[0,b]}$. Its Fourier coefficients are given by
$[\rep]_1=b$, $ [\rep]_{2j}=(\sqrt{2}\pi j)^{-1}\sin(2\pi j b)$, $ [\rep]_{2j+1}=-(\sqrt{2}\pi j)^{-1}\cos(2\pi j b)$ for $j\geq1$ and, hence $[\rep]_{j}^2\sim j^{-2}$.
Again we assume that $p\geq 3/2$ and $a>0$. Then the mean squared error  of the estimator $ \hell_{\mstar}=\int_0^b \hsol_{\mstar}(t)dt$ is bounded up to a constant by the minimax rate of convergence  $\drep$. In the three cases  the order of $\drepn$ is given by
\begin{enumerate}
 \item[(pp)] $\drepn\sim
\left\{\begin{array}{lcl} 
 n^{-(2p+1)/(2p+2a)}, && \mbox{if $a>1/2$},\\
 n^{-1}\log n, && \mbox{if $a=1/2$},\\
 n^{-1}, && \mbox{otherwise,}
\end{array}\right.$
 \item[(ep)]  $\drepn\sim (\log n)^{-(2p+1)/(2a)}$,
 \item[(ep)] $\drepn\sim
\left\{\begin{array}{lcl} 
 n^{-1}(\log n)^{(2a-1)/(2p)}, && \mbox{if $a>1/2$},\\
 n^{-1}\log(\log n), && \mbox{if $a=1/2$},\\
 n^{-1}, && \mbox{otherwise.}
\end{array}\right.$
\end{enumerate}
 As in the  direct regression model where the average value of the regression function can be estimated with  rate $n^{-1}$ we obtain the parametric rate in the case of (pp) and (ep) if $a< 1/2$. 
 \hfill$\square$
\end{example}

\begin{example}
Consider estimation of the weighted average derivative of the structural function $\sol$ with weight function $H$, i.e.,  $\int_0^1\sol'(t)H(t)dt$. This functional is useful not only for estimating scaled coefficients of an index model, but also to quantify the average slope of structural functions. Assume that the weight function $H$ is continuously differentiable and vanishes at the boundary of the support of $Z$, i.e., $H(0)=H(1)=0$. Integration by parts gives
 $\int_0^1\sol'(t)H(t)dt=-\int_0^1\sol(t)h(t)dt=-\ell_{h}(\sol)$  with representer $h$ given by the  derivative of $H$.
The weighted average derivative estimator  $\hell_{\mstar}=-\int_0^1 \hsol_{\mstar}(t)\rep(t)dt$ is minimax optimal.
As an illustration  consider the specific weight function $H(t)=1-(2t-1)^2$ with derivative $h(t)=4(1-2t)$ for $0\leq t\leq 1$. It is easily seen that the Fourier coefficients of the representer $h$ are $[\rep]_1=0$, 
$ [\rep]_{2j}=0$, $ [\rep]_{2j+1}=4\sqrt{2}(\pi j)^{-1}$ for $j\geq1$ and, thus $[\rep]_{2j+1}^2\sim j^{-2}$.  Thus, for the particular choice of the weight function $H$ the estimator $\hell_{\mstar}$ attains up to a constant the optimal rate $\drep$, which  was already specified in Example \ref{exp:average}.
\hfill$\square$
\end{example}

 \section{Adaptive estimation}\label{sec:adaptive}
In this section we derive an adaptive estimation procedure for the value of the linear function $\ell_\rep(\sol)$. This procedure is  based on the estimator $\hell_{\hm}$ given in \eqref{gen:def:est} with dimension parameter $\hm$ selected as a minimizer of the data driven  penalized contrast criterion (\ref{full:def:model}--\ref{full:def:contrast}).
The selection criterion (\ref{full:def:model}--\ref{full:def:contrast}) involves the random upper bound $\hM$ and the random penalty sequence $\hpen$ which we introduce below. We show that the estimator  $\hell_{\hm}$ attains the minimax rate of convergence within a logarithmic term. Moreover, we illustrate the cost due to adaption by considering classical smoothness assumptions.

In an intermediate step we do not consider the estimation of unknown quantities in the penalty function. Let us therefore consider a deterministic upper bound $M_n$ and a deterministic penalty sequence $\pen:=(\pen_m)_{m\geq 1}$, which is nondecreasing. These quantities are constructed such that they can be easily estimated in a second step.
As an adaptive choice $\tm$ of the dimension parameter $m$ we propose the minimizer of  a penalized contrast criterion, that is,
\begin{subequations}
\begin{equation}
  \label{part:def:model}
\tm:=\argmin_{1\leq m\leq M_n}\set{\Psi_{m}+\pen_{m}}
\end{equation}
where  the random sequence of contrast $\Psi:=(\Psi_m)_{m\geq 1}$ is defined by
 \begin{equation}
  \label{part:def:contrast}
\Psi_m:=\max_{m\leq m'\leq M_n}\set{\absV{\hell_{m'}-\hell_{m}}^2 -\pen_{m'}}.
\end{equation}
\end{subequations}
The fundamental idea to establish an appropriate upper bound for the risk of $\hell_{\tm}$ is given by the following reduction scheme. Let us denote $m\wedge m':=\min(m,m')$. 
Due to the definition of $ \Psi$ and $\tm$ we deduce for all $1\leq m\leq M_n$
\begin{multline*}
  \absV{\hell_{\tm}-\ell_\rep(\sol)}^2\leq 3\set{ \absV{\hell_{\tm}-\hell_{\tm\wedge m}}^2+ \absV{\hell_{\tm\wedge m}-\hell_{m}}^2+ \absV{\hell_{m}-\ell_\rep(\sol)}^2}\\
\hfill\leq 3\set{ \Psi_m +\pen_{\tm} +\Psi_{\tm}+\pen_m +  \absV{\hell_{m}-\ell_\rep(\sol)}^2} \\
\hfill\leq 6\set{\Psi_m +\pen_m}+3  \absV{\hell_{m}-\ell_\rep(\sol)}^2
\end{multline*}
where the right hand side does not depend on the adaptive choice $\tm$.
Since the penalty sequence $\pen$ is nondecreasing we obtain
\begin{equation*}
  \Psi_{m}\leq 6\max_{m\leq m'\leq M} \vect{\absV{\hell_{m'}-\ell_\rep(\sol_{m'})}^2 -\frac{1}{6}\pen_{m'}}_+ + 3\max_{m\leq m'\leq M_n} \absV{\ell_\rep(\sol_{m}-\sol_{m'})}^2.
\end{equation*}
Combing the last estimate with the previous reduction scheme yields for all $1\leq m\leq M_n$ 
 \begin{equation} \label{adap:main:ineq}
  \absV{\hell_{\tm}-\ell_\rep(\sol)}^2
\leq7 \pen_m + 78 \bias_m
\hfill+ 42\max_{m\leq m'\leq M} \vect{\absV{\hell_{m'}-\ell_\rep(\sol_{m'})}^2 -\frac{1}{6}\pen_{m'}}_+
\end{equation}
where $\bias_m:=\sup_{m'\geq m} \absV{\ell_\rep(\sol_{m'}-\sol)}^2$.
We will prove below that $\pen_m + \bias_m$ is of the order $\drepnl$. Moreover, we will bound the right hand side term appropriately with the help of Bernstein's inequality.

Let us now introduce the upper bound $M_n$ and sequence of penalty $\pen_m$ used in the penalized contrast criterion (\ref{part:def:model}--\ref{part:def:contrast}).
In the following, assume without loss of generality that $\fou{\rep}_{1}\neq 0$.
\begin{defin}\label{part:def:1}For all $n\geq 1$ let $a_n:=n^{1-1/\log(2+\log n)}(1+\log n)^{-1}$ and $\Mh:=\max\{1\leq m\leq \lfloor n^{1/4}\rfloor:\max\limits_{1\leq j\leq m}\fou{\rep}_{j}^2\leq n\fou{\rep}_{1}^2\}$ then we define
  \begin{equation*}
   M_n:=\min\Big\{2\leq m\leq \Mh:\,m^3\normV{\fou{\op}_{\um}^{-1}}^2 \max\limits_{1\leq j\leq m}[h]_j^2>a_n \Big\}-1
  \end{equation*}
where we set  $M_n:=\Mh$ if the min runs over an empty set. Thus,  $M_n$ takes values between $1$ and $\Mh$.
 Let $\varsigma_m^2=74\big(\Ex[Y^2]+ \max_{1\leq m'\leq m}\|[\op]_\um^{-1}[g]_\um\|^2\big)$, then we define
\begin{equation}\label{part:def:pen}
\pen_m:= 24\,\varsigma_m^2 \,(1+\log n) \,n^{-1} \max_{1\leq m'\leq m}\normV{[\rep]_{\underline m'}^t[\op]_{\underline m'}^{-1}}^2.
\end{equation}
\end{defin}

To apply Bernstein's inequality we need another assumption regarding the error term $U$.
This is captured by the set $\cU^\infty_\sigma$ for some $\sigma>0$, which contains all conditional distributions $P_{U|W}$ such that $\Ex[U|W]=0$, $\Ex[U^2|W]\leq \sigma^2$, and Cramer's condition hold, i.e., 
 \begin{equation*}
  \Ex[|U|^k|W]\leq \sigma^{k}\, k!, \quad k=3,4,\dots.
 \end{equation*}
Moreover, besides Assumption \ref{ass:A1} we need the following Cramer condition which is in particular satisfied if the basis $\{f_l\}_{l\geq 1}$ are uniformly bounded.
\begin{assA}\label{ass:A2}
There exists $\eta\geqslant 1$ such that the distribution of $W$ satisfies  
\begin{itemize}
\item[] $\sup_{j,l\in\N} \Ex| f_j(W)f_l(W)- \Ex [f_j(W)f_l(W)]|^k\leqslant \eta^{k}\, k!$, $k=3,4,\dotsc$.
\end{itemize}
\end{assA}

We now present an upper bound for $\hell_{\tm}$.
 As usual in the context of adaptive estimation of functionals we face a logarithmic loss due to the adaptation.
\begin{theo}\label{partially adaptive unknown operator}
Assume an iid. $n$-sample of $(Y,Z,W)$ from the model (\ref{model:NP}--\ref{model:NP2}) with $\Ex[Y^2]>0$.
Let Assumptions  \ref{ass:reg}--\ref{ass:A2} be satisfied.  Suppose that $(\md)^3\max_{1\leq j\leq\md}[h]_j^2=o( a_n\opw_{\md})$ as $n\to\infty$.
Then we have for all $n\geq1$
 \begin{equation*} \sup_{\op\in\cTdDw}\sup_{P_{U|W}\in\cU_\sigma^\infty} \sup_{\sol \in \Fswsr}\Ex \absV{\hell_{\tm}-\ell_\rep(\sol)}^2\leq \mathcal C \,\drepnl
 \end{equation*}
 for a  constant $\mathcal C>0$ only depending on the classes $\Fswsr$, $\cTdDw$, the constants $\sigma$, $\eta$ and the representer $h$.
\end{theo}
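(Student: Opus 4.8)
The argument rests on the deterministic bound \eqref{adap:main:ineq}, which holds for every $1\le m\le M_n$; the plan is to evaluate it at $m=\md=\mstarnl$, take expectations, and bound separately the three resulting pieces. The first step is to check that $\md\le M_n$ for all large $n$. Since $\op\in\cTdDw$, the extended link condition \eqref{bm:link:gen} gives $\normV{[\op]^{-1}_\um}^2\le\tD\,\opw_m^{-1}$ (write $[\op]^{-1}_\um=[\Diag_{\opw}]^{-1/2}_\um\big([\Diag_{\opw}]^{1/2}_\um[\op]^{-1}_\um\big)$ and use that $\opw$ is nonincreasing), so the inequality defining $M_n$ in Definition \ref{part:def:1} is violated at $m=\md$ as soon as $(\md)^3\,\opw_{\md}^{-1}\max_{j\le\md}[\rep]_j^2\lesssim a_n$, which is exactly the hypothesis $(\md)^3\max_{j\le\md}[\rep]_j^2=o(a_n\opw_{\md})$. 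Together with $\md\le n^{1/4}$ and $\max_{j\le\md}[\rep]_j^2\le n[\rep]_1^2$ for large $n$ — consequences of Assumptions \ref{ass:reg}, \ref{ass:reg:II} and the sub-polynomial growth of $\md$ — this gives $\md\le M_n$. Taking expectations in \eqref{adap:main:ineq} at $m=\md$ leaves
\begin{equation*}
\Ex\absV{\hell_{\tm}-\ell_\rep(\sol)}^2\le 7\,\pen_{\md}+78\,\bias_{\md}+42\,\Ex\Big[\max_{\md\le m'\le M_n}\vect{\absV{\hell_{m'}-\ell_\rep(\sol_{m'})}^2-\tfrac16\pen_{m'}}_+\Big],
\end{equation*}
and it suffices to bound each summand by a multiple of $\drepnl$.

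The two deterministic terms are treated as in the proof of Theorem \ref{res:upper}, with $n$ replaced by $x=n/\log n$, so that $\md=\mstarnl$ and $\ad=\astarnl\ge n^{-1}\log n$. Under $\sol\in\Fswsr$ and $\op\in\cTdDw$ one has $\Ex[Y^2]\lesssim1$ and, by the link condition, $\normV{\sol_{m'}}^2_{L^2}\lesssim1$ uniformly in $m'$, hence $\varsigma_{m'}^2\lesssim1$; combining this with \eqref{part:def:pen}, the bound $\normV{[\rep]^t_{\underline{m'}}[\op]^{-1}_{\underline{m'}}}^2\le\tD\sum_{j\le m'}[\rep]_j^2\opw_j^{-1}$ from \eqref{bm:link:gen}, and $n^{-1}(1+\log n)\lesssim\ad$, yields $\pen_{\md}\lesssim\ad\sum_{j\le\md}[\rep]_j^2\opw_j^{-1}\le\drepnl$. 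For the bias, the estimate $\sup_{m'\ge\md}\absV{\ell_\rep(\sol_{m'}-\sol)}^2\lesssim\max\{\ad\sum_{j\le\md}[\rep]_j^2\opw_j^{-1},\sum_{j>\md}[\rep]_j^2\sw_j^{-1}\}$, obtained exactly as in Theorem \ref{res:upper} from $\normV{\sol}_\sw^2\le\sr$ and the link condition, gives $\bias_{\md}\lesssim\drepnl$; Assumption \ref{ass:reg:II} and the analogue of \eqref{minimax upper c1} with $n/\log n$ in place of $n$ ensure the two pieces have the common order $\drepnl$.

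The main work is the stochastic term. On the event $\Omega_n:=\{\normV{[\hop]_{\underline{m'}}-[\op]_{\underline{m'}}}\le\tfrac12\normV{[\op]^{-1}_{\underline{m'}}}^{-1}\ \text{for all}\ 1\le m'\le M_n\}$ the matrix $[\hop]_{\underline{m'}}$ is invertible with $\normV{[\hop]^{-1}_{\underline{m'}}}\le2\normV{[\op]^{-1}_{\underline{m'}}}\le\sqrt n$ (the last inequality since $m'\le M_n$ forces $(m')^3\normV{[\op]^{-1}_{\underline{m'}}}^2\lesssim a_n\ll n$), so $\hell_{m'}$ is in its non-trivial branch and, using $[g]_{\underline{m'}}=[\op]_{\underline{m'}}[\sol_{m'}]_{\underline{m'}}$,
\begin{equation*}
\hell_{m'}-\ell_\rep(\sol_{m'})=[\rep]^t_{\underline{m'}}[\op]^{-1}_{\underline{m'}}\big([\widehat g]_{\underline{m'}}-[g]_{\underline{m'}}\big)-[\rep]^t_{\underline{m'}}[\hop]^{-1}_{\underline{m'}}\big([\hop]_{\underline{m'}}-[\op]_{\underline{m'}}\big)[\sol_{m'}]_{\underline{m'}}+R_{m'},
\end{equation*}
where $R_{m'}:=[\rep]^t_{\underline{m'}}\big([\hop]^{-1}_{\underline{m'}}-[\op]^{-1}_{\underline{m'}}\big)\big([\widehat g]_{\underline{m'}}-[g]_{\underline{m'}}\big)$. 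The leading term is the scalar mean $n^{-1}\sum_i\zeta_{i,m'}$ with $\zeta_{i,m'}=Y_i\psi_{m'}(W_i)-\Ex[Y\psi_{m'}(W)]$, where $\psi_{m'}:=\sum_{l\le m'}v_lf_l$ and $v:=([\op]^{-1}_{\underline{m'}})^t[\rep]_{\underline{m'}}$, so that $\normV{\psi_{m'}}_W^2=\normV{v}^2=\normV{[\rep]^t_{\underline{m'}}[\op]^{-1}_{\underline{m'}}}^2$ by Parseval; its variance is $\le n^{-1}\Ex[Y^2\psi_{m'}^2(W)]\lesssim n^{-1}\varsigma_{m'}^2\normV{[\rep]^t_{\underline{m'}}[\op]^{-1}_{\underline{m'}}}^2$, the key point being that $\Ex[Y^2\psi^2(W)]\lesssim\Ex[\psi^2(W)]$ uniformly over $\psi\in L^2_W$, which follows from Assumption \ref{ass:A1}(i) and the summability $\sum_j\sw_j^{-1}<\infty$ (Assumption \ref{ass:reg}), making $\Ex[\sol^2(Z)|W]$ uniformly bounded over $\Fswsr$. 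Since $\pen_{m'}$ carries precisely the extra factor $(1+\log n)$ relative to this variance, Bernstein's inequality — whose higher-moment requirement is met by $P_{U|W}\in\cU_\sigma^\infty$ and Assumptions \ref{ass:A1} and \ref{ass:A2} — shows the squared leading term exceeds $c\,\pen_{m'}$ only with probability $\lesssim n^{-2}$ for a suitable absolute $c$, while the other two terms of the decomposition are dominated on $\Omega_n$, using $m'\le M_n$, by $o(1)\pen_{m'}$ up to a remainder of expectation $O(n^{-1})$. A union bound over the at most $n^{1/4}$ indices $m'$ and integration of the Bernstein tail then give $\Ex\big[\max_{\md\le m'\le M_n}\vect{\absV{\hell_{m'}-\ell_\rep(\sol_{m'})}^2-\tfrac16\pen_{m'}}_+\1_{\Omega_n}\big]=O(n^{-1})=o(\drepnl)$. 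On $\Omega_n^c$ we have $\hell_{m'}=0$ by the threshold in \eqref{gen:def:est}, so the remaining contribution is $\le\Ex[\max_{m'\le M_n}\ell_\rep(\sol_{m'})^2\,\1_{\Omega_n^c}]$, negligible because $\ell_\rep(\sol_{m'})^2$ is at most polynomial in $n$ on $\{m'\le M_n\}$ whereas $P(\Omega_n^c)$ decays faster than any power of $n$ by Bernstein applied to the entries of $[\hop]_{\underline{m'}}-[\op]_{\underline{m'}}$ together with $(m')^3\normV{[\op]^{-1}_{\underline{m'}}}^2\lesssim a_n$. Collecting the bounds gives the assertion.

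The main obstacle is the stochastic term, where three points need care: (i) choosing $\Omega_n$ and tracking absolute constants so that on $\Omega_n$ the operator-estimation term and the second-order remainder $R_{m'}$ are genuinely absorbed into a fixed fraction of $\pen_{m'}$, or contribute only $O(n^{-1})$ in expectation, rather than being merely $O(\pen_{m'})$; (ii) calibrating the penalty scale $\varsigma_{m'}^2$ — which depends on the unknown $\Ex[Y^2]$ and $\normV{\sol_{m'}}_{L^2}^2$ — to be simultaneously large enough for Bernstein's inequality to bite at the $(1+\log n)$ level and $O(1)$ so that $\pen_{\md}\lesssim\drepnl$; and (iii) the verification $\md\le M_n$, which is where the hypothesis $(\md)^3\max_{j\le\md}[\rep]_j^2=o(a_n\opw_{\md})$ is used. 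The logarithmic gap between $\drepnl$ and the non-adaptive rate $\drepn$ of Theorem \ref{res:upper} is the unavoidable cost of adapting to the unknown $\sw$, $\opw$ and $\rep$, produced exactly by the $(1+\log n)$ factor in $\pen_m$ required for the union bound over $m'$.
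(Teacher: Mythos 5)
Your reduction is the right one: evaluate \eqref{adap:main:ineq} at $m=\md$, show $\md\leq M_n$ using the hypothesis and the extended link condition, and bound $\pen_{\md}$, $\bias_{\md}$ and the stochastic maximum separately. The treatment of the two deterministic pieces is essentially the paper's (though the paper verifies $\md\leq\Mu\leq M_n$ via the deterministic sandwich Lemma \ref{app:upperboundmodel:l1}), and the small-$n$ regime that the paper handles separately can indeed be absorbed into $\mathcal C$. The genuine gap is in the stochastic term, where your decomposition and concentration argument do not go through as stated.

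First, you set the leading term to be $n^{-1}\sum_i\zeta_{i,m'}$ with $\zeta_{i,m'}=Y_i\psi_{m'}(W_i)-\Ex[Y\psi_{m'}(W)]$ and appeal to Bernstein, claiming the Cramer requirement follows from $P_{U|W}\in\cU_\sigma^\infty$ together with Assumptions \ref{ass:A1} and \ref{ass:A2}. It does not: writing $\xi_s:=\sum_{j\le m}s_jf_j$ for a unit vector $s$, one has no control on the unconditional moments $\Ex|U\xi_s(W)|^k$, because Assumption \ref{ass:A2} only provides Cramer moments for $f_jf_l$ and this makes $\xi_s$ merely sub-exponential, so the product $U\xi_s(W)$ of two sub-exponential factors fails Cramer's condition. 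The paper avoids this by conditioning on $W_1,\dots,W_n$ in Lemma \ref{app:part:l2}, applying Bernstein to $\sum_iU_i\xi_s(W_i)$ with the $W_i$ frozen, and then controlling the random quantities $n^{-1}\sum_i|\xi_s(W_i)|^2$ and $\max_i|\xi_s(W_i)|$ via the event $\cB$ and the fourth moments from Assumptions \ref{ass:A1} and \ref{ass:A2}; this conditional route is essential and your sketch misses it. Relatedly, the paper's leading noise is built from $V_m=n^{-1}\sum_i(U_i+\sol(Z_i)-\sol_m(Z_i))[f(W_i)]_\um$ rather than $[\widehat g]_\um-[g]_\um$. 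Keeping $\sol-\sol_m$ rather than $\sol$ is what makes Lemma \ref{app:part:l3} work: since $[T(\sol-\sol_m)]_\um=0$ one can rewrite $(\sol(Z)-\sol_m(Z))\xi_s(W)$ as $\sum_l[\sol-\sol_m]_l\sum_js_j(e_l(Z)f_j(W)-[T]_{jl})$ and invoke Assumption \ref{ass:A1}(ii), and the resulting Cramer scale $\mu_m\asymp\sqrt{m^3\sw_m^{-1}}\to 0$ is exactly the quantity the second summand of $\varsigma_m^2$ is designed to absorb (this is the content of Corollary \ref{app:part:c1}, which checks $\tpen_m\le\pen_m$ for $m\ge\md$ and $n$ large — a calibration step your proof does not address). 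Your variance bound $\Ex[Y^2\psi^2(W)]\lesssim\normV{\psi}_W^2$ is correct but insufficient for the $(1+\log n)$-level deviation.

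Second, the claim that on $\Omega_n^c$ the estimator vanishes, $\hell_{m'}=0$, is false. Your $\Omega_n$ is the event that $\normV{\Xi_{m'}}\le\tfrac12\normV{[\op]^{-1}_{\underline{m'}}}^{-1}$ for all $m'\le M_n$; on its complement $[\hop]_{\underline{m'}}$ may still be nonsingular with $\normV{[\hop]^{-1}_{\underline{m'}}}\le\sqrt n$, so the threshold in \eqref{gen:def:est} need not fire and $\hell_{m'}$ can be large. Dropping the $|\hell_{m'}|^2$ contribution on this event is a real gap; the paper retains a term $2n\normV{[\rep]^t_\um[\op]^{-1}_\um\Xi_m}^2\normV{V_m}^2\1_{\mho_m^c}$ precisely to control it, and only on the genuine trigger event $\Omega_m^c=\{\normV{[\hop]^{-1}_\um}>\sqrt n\}$ does $\hell_m=0$. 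The asserted superpolynomial decay of $P(\Omega_n^c)$ is likewise optimistic: Lemma \ref{pr:minimax:l1} combined with $m^3\normV{[\op]^{-1}_\um}^2\lesssim a_n$ yields only a polynomial-in-$n$ rate, which the paper compensates by carefully pairing indicators with the quantities they multiply.

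So the high-level architecture is right, but the core probabilistic estimate — the analogue of Lemma \ref{app:part:l1} via Lemmata \ref{app:part:l2}--\ref{app:part:l3} and Corollary \ref{app:part:c1} — is not established by the argument you sketch, and the handling of the threshold event needs to be fixed along the paper's lines.
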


\begin{rem}\label{rem partially adaptive unknown operator}
 In all examples studied below the condition $(\md)^3\max_{1\leq j\leq\md}[h]_j^2=o(a_n\opw_{\md})$ as $n$ tends to infinity is satisfied if the structural function $\sol$ is sufficiently smooth. More precisely, in case of (pp) it suffices to assume $3<2p+2\min(0,s)$. On the other hand, in case of (pe) or (ep) this condition is automatically fulfilled.
\hfill$\square$\end{rem}

In the following definition we introduce empirical versions of the integer $M_n$ and the penalty sequence $\pen$. 
Thereby, we complete the data driven  penalized contrast criterion (\ref{full:def:model}--\ref{full:def:contrast}). 
This allows for a completely data driven selection method.
For this purpose, we construct an estimator for $\varsigma_m^2$ by replacing the unknown quantities by their empirical analogon, that is,
\begin{equation*}
 \widehat\varsigma_m^2:=74\Big(n^{-1}\sum_{i=1}^n Y_i^2+\max_{1\leq m'\leq m}\|[\hop]_\um^{-1}[\widehat g]_\um\|^2\Big).	
\end{equation*}
With the nondecreasing sequence $(\widehat\varsigma_m^2)_{m\geq 1}$ at hand we only need to replace the matrix $[\op]_\um$ by its empirical counterpart (cf. Subsection 3.1).
\begin{defin}\label{full:def:1}
 Let  $a_n$ and $\Mh$ be as in Definition \ref{part:def:1} then for all $n\geq 1$ define
  \begin{equation*}
  \hM:=\min\Big\{2\leq m\leq \Mh:\, m^3\normV{\fou{\hop}_{\um}^{-1}}^2 \max\limits_{1\leq j\leq m}[h]_j^2> a_n\Big\}-1
\end{equation*}
where we set  $\hM:=\Mh$ if the min runs over an empty set. Thus,  $\hM$ takes values between $1$ and $\Mh$.
Then we introduce for all $m\geq 1$ an empirical analogon of $\pen_m$ by
\begin{equation}\label{full:def:pen}
\hpen_m:=204\,\widehat\varsigma_m^2(1+\log n)n^{-1} \max_{1\leq m'\leq m}\normV{[\rep]_{\um'}^t[\rop]_{\um'}^{-1}}^2.
\end{equation}
\end{defin}

Before we establish the next upper bound we introduce
\begin{equation}\label{part:def:bounds:o}
 \Mo:=\min\set{2\leq m\leq \Mh:\,\upsilon_m^{-1}m^3 \max\limits_{1\leq j\leq m}[h]_j^2> 4Da_n}-1
\end{equation}
where  $\Mo:=\Mh$ if the min runs over an empty set. Thus, $\Mo$ takes values between $1$ and $ \Mh$.
As in the partial adaptive case we do not attain up to a constant the minimax rate of convergence $\drep$. A logarithmic term must be paid for adaption as we see in the next assertion.
\begin{theo}\label{fully adaptive unknown operator}Let the assumptions of Theorem \ref{partially adaptive unknown operator} be satisfied.
Additionally suppose that $(\Mo+1)^2\log n=o\big(n\opw_{\Mo+1}\big)$  as $n\to\infty$ and $\sup_{j\geq 1}\Ex|e_j(Z)|^{20}\leq \eta^{20}$.
Then for all $n\geq 1$ we have
\begin{equation*}
\sup_{\op \in \cTdDw}\sup_{P_{U|W}\in\cU_\sigma^\infty} \sup_{\sol \in \Fswsr} \Ex|\hell_{\whm}-\ell_\rep(\sol)|^2   \leq  \mathcal C\,  \drepnl
\end{equation*}
 for a  constant $\mathcal C>0$ only depending on the classes $\Fswsr$, $\cTdDw$, the constants $\sigma$, $\eta$ and the representer $h$.
\end{theo}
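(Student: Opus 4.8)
The plan is to bootstrap from the partially adaptive result (Theorem~\ref{partially adaptive unknown operator}) by showing that replacing the deterministic quantities $M_n$ and $\pen_m$ by their empirical counterparts $\hM$ and $\hpen_m$ costs only an extra constant factor. First I would establish that, on an event of probability $1-O(n^{-c})$ for a suitable power $c$, the empirical matrices $[\hop]_{\um}^{-1}$ are uniformly close to $[\op]_{\um}^{-1}$ over the relevant range of $m$; concretely, I want a relative-error bound of the form $\normV{[\hop]_{\um}^{-1}-[\op]_{\um}^{-1}}\leq \tfrac12\normV{[\op]_{\um}^{-1}}$ for all $m\leq \Mh$, which follows from a matrix Bernstein / Baraud-type concentration argument applied to $[\hop]_{\um}-[\op]_{\um}=\tfrac1n\sum_i[\basW(W_i)]_{\um}[\basZ(Z_i)]_{\um}^t-[\op]_{\um}$, using Assumptions~\ref{ass:A1}, \ref{ass:A2}, the extra moment bound $\sup_j\Ex|e_j(Z)|^{20}\le\eta^{20}$, and the condition $(\Mo+1)^2\log n=o(n\opw_{\Mo+1})$, which guarantees the perturbation is negligible up to $\Mo$. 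On this event I get $\tfrac13\normV{[\op]_{\um}^{-1}}\le\normV{[\hop]_{\um}^{-1}}\le 3\normV{[\op]_{\um}^{-1}}$ and similarly for the vector norms $\normV{[\rep]_{\um'}^t[\hop]_{\um'}^{-1}}$ appearing in the penalty.

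Second I would use these two-sided bounds to sandwich $\hM$ between deterministic indices. The defining inequality for $\hM$ involves $m^3\normV{[\hop]_{\um}^{-1}}^2\max_j[h]_j^2$; comparing with $a_n$ and using the relative-error bound shows that $\Mo\le \hM\le M_n$ on the good event (this is exactly why $\Mo$ is introduced in \eqref{part:def:bounds:o} with the factor $4D$, the extended link constant from \eqref{bm:link:gen}). In particular $\hM\ge\Mo$ and $\md\le\Mo\le\hM$, so the oracle dimension $\md$ lies in the admissible range of the selection criterion. Likewise, I would show $\hvarsigma_m^2$ is, on the good event, comparable to $\varsigma_m^2$ — here one controls $n^{-1}\sum Y_i^2$ around $\Ex[Y^2]$ by Bernstein (using $P_{U|W}\in\cU_\sigma^\infty$ and $g\in L^2_W$) and $\|[\hop]_{\um}^{-1}[\hat g]_{\um}\|^2$ around $\|[\op]_{\um}^{-1}[g]_{\um}\|^2$ via the operator bound above plus concentration of $[\hat g]_{\um}$. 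Combining, $\tfrac{1}{\mathcal C}\pen_m\le\hpen_m\le\mathcal C\,\pen_m$ uniformly over $m\le\Mh$ on the good event — the constant $204$ versus $24$ in the two penalty definitions is chosen to absorb exactly these fluctuations.

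Third, with $\hM$ and $\hpen$ comparable to $M_n$ and $\pen$, I would rerun the reduction scheme \eqref{adap:main:ineq} verbatim but with the random penalty: the inequality $\absV{\hell_{\whm}-\ell_\rep(\sol)}^2\le 7\hpen_m+78\bias_m+42\max_{m\le m'\le\hM}(\absV{\hell_{m'}-\ell_\rep(\sol_{m'})}^2-\tfrac16\hpen_{m'})_+$ holds for any $m\le\hM$ because $\hpen$ is nondecreasing and $\whm$ minimizes $\hcontr_m+\hpen_m$. Evaluating at $m=\md$ (legitimate since $\md\le\hM$ on the good event), on the good event we bound $7\hpen_{\md}\le\mathcal C\pen_{\md}\lesssim\drepnl$ and $78\bias_{\md}\lesssim\drepnl$ exactly as in the proof of Theorem~\ref{partially adaptive unknown operator}, while the maximal deviation term is controlled by the same Bernstein argument as there (the threshold $\tfrac16\hpen_{m'}$ is, up to the constant, $\tfrac16\cdot\tfrac{1}{\mathcal C}\pen_{m'}$, still large enough). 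Off the good event, the crude bound $\absV{\hell_{\whm}}\le\sqrt n\,\normV{[\rep]_{\whm}}\|[\hat g]_{\whm}\|$ (from the threshold $\normV{[\hop]^{-1}_{\um}}\le\sqrt n$ in \eqref{gen:def:est}) together with $\whm\le\Mh\le n^{1/4}$ gives a polynomial-in-$n$ bound on $\Ex\absV{\hell_{\whm}-\ell_\rep(\sol)}^2\mathbf 1_{\text{bad}}$ that is killed by the $O(n^{-c})$ probability of the bad event, contributing a negligible $o(n^{-1})=o(\drepnl)$ term.

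The main obstacle is the first step: obtaining a uniform (over $m$ up to $\Mh\sim n^{1/4}$) relative-error bound for $[\hop]_{\um}^{-1}$ that is tight enough to pin $\hM$ between $\Mo$ and $M_n$ and to make the substitution $\pen\leftrightarrow\hpen$ lossless. This requires a careful concentration inequality for the spectral norm of $[\hop]_{\um}-[\op]_{\um}$ — ideally via a truncation argument using the $20$th-moment hypothesis on $e_j(Z)$ to handle the heavy-tailed product entries, combined with Assumption~\ref{ass:A2} — and then a deterministic perturbation lemma (Neumann series) to pass to the inverse, valid precisely on the range where $(\Mo+1)^2\log n=o(n\opw_{\Mo+1})$ forces the perturbation below $1/2$. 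Everything downstream is a re-run of the already-established partially adaptive proof with constants enlarged to swallow the now-random multiplicative factors, plus the standard bad-event bookkeeping.
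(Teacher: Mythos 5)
Your proposal follows the same architecture as the paper's proof: introduce a high-probability ``good'' event on which the random quantities $\hM$ and $\hpen_m$ are comparable to deterministic analogues, rerun the deterministic (Theorem~\ref{partially adaptive unknown operator}) argument on that event, and handle the complement with a crude polynomial bound killed by the small probability of the bad event. The paper packages this via the event $\cA:=\set{\pen_m\le\hpen_m\le 8\pen_m,\ 1\le m\le\Mo}\cap\set{\Mu\le\hM\le\Mo}$, shows $\cH\cap\cG\cap\cJ\subset\cA$ (Lemma~\ref{adapt:full:lem:0}), and bounds $n^4(\Mh)^4 P(\cA^c)$ by a constant (Lemma~\ref{adapt:full:lem:1}) using Bernstein, the Cramer conditions, and the new moment hypothesis on $e_j(Z)^{20}$ — exactly the three ingredients you identify.

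There is, however, one concrete slip in your Step~2. You write that on the good event $\Mo\le\hM\le M_n$ and hence $\md\le\Mo\le\hM$. This is backwards and internally inconsistent: the paper establishes $\Mu\le M_n\le\Mo$ (Lemma~\ref{app:upperboundmodel:l1}), so $\Mo\le\hM\le M_n$ would force $M_n=\Mo$. What the perturbation bound actually gives on $\cH$ is $\Mu\le\hM\le\Mo$, and the crucial fact you need for the oracle dimension to be admissible is the \emph{lower} bound $\md\le\Mu\le\hM$, which holds for $n$ large by the hypothesis $(\md)^3\max_j[h]_j^2=o(a_n\opw_{\md})$ from Theorem~\ref{partially adaptive unknown operator}. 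The factor $4D$ you correctly flag is present precisely so that the two deterministic indices $\Mu$ and $\Mo$ are defined by thresholds $a_n$ and $4Da_n$ (respectively $4D\opw_m^{-1}$ and $\opw_m^{-1}$) wide enough to swallow the relative perturbation of $\|[\hop]_\um^{-1}\|$ and the constant $D$ from the link condition; but the resulting sandwich must be read in the direction $\Mu\le\hM\le\Mo$, with $\md\le\Mu$ doing the work. Once that is corrected, the remainder of your plan matches the paper's proof.
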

\begin{rem}\label{rem fully adaptive unknown operator}
Note that below in all examples illustrating Theorem  \ref{fully adaptive unknown operator} the condition  $(\Mo+1)^2\log n=o(n\opw_{\Mo+1})$ as $n$ tends to infinity is  automatically satisfied.
\hfill$\square$\end{rem}

As in the case of minimax optimal estimation we now present an upper bound uniformly over the class $\Frwrr$ of representer.
For this purpose define $M^\rw_n:=\max\{1\leq m\leq \lfloor n^{1/4}\rfloor:\max_{1\leq j\leq m}(\rw_{j}^{-1})\leq n\}$. In the definition of the bounds $\hM$, $\Mo$, and $\Mu$ (cf. Appendix \ref{sec:adaptive}) we replace $\Mh$ and  $\max_{1\leq j\leq m}[\rep]_j^2$ by $M^\rw_n$ and $\max_{1\leq j\leq m} \omega_j^{-1}$, respectively. Consequently, by employing $\|\rep\|_{1/\sw}^2\leq\rr$ and $\drepn \leq\rr \,\drwn$ for all $\rep \in \cF_\rw^\rr$ the next result follows line by line the proof of Theorem \ref{fully adaptive unknown operator} and hence its proof is omitted.
\begin{coro}\label{fully adaptive unknown operator:coro}
Under the conditions of Theorem \ref{fully adaptive unknown operator} we have for all $n\geq 1$
\begin{equation*} 
 \sup_{\op \in \cTdDw}\sup_{P_{U|W}\in\cU_\sigma^\infty} \sup_{\sol \in \Fswsr,\, \rep\in\Frwrr} \Ex|\hell_{\whm}-\ell_\rep(\sol)|^2   \leq  \mathcal C\, \drwnl
\end{equation*}
where the constant $\mathcal C>0$ depends on the parameter spaces $\Fswsr$, $\Frwrr$, $\cTdDw$, and the constants $\sigma$, $\eta$.
\end{coro}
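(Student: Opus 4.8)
The plan is to rerun the proof of Theorem~\ref{fully adaptive unknown operator} line by line, keeping track of the dependence on the representer $\rep$, and to replace, at each point where the fixed--representer argument produces the rate $\drepnl$, an estimate that is uniform over $\rep\in\Frwrr$ and produces $\rr\,\drwnl$. Two elementary bounds make this possible. Since $\inf_{j\geq1}\{\rw_j\sw_j\}>0$ we have $\Frwrr\subset\cF_{1/\sw}$ and $\normV{\rep}_{1/\sw}^2\lesssim\rr$ for every $\rep\in\Frwrr$, which controls the bias $\bias_{m}=\sup_{m'\geq m}\absV{\ell_\rep(\sol_{m'}-\sol)}^2$ uniformly (via the bias estimate established in the proof of Theorem~\ref{res:upper}); and, under the extended link condition~\eqref{bm:link:gen}, for all $m'$ and all $\rep\in\Frwrr$,
\begin{equation*}
  \normV{[\rep]_{\um'}^t[\op]_{\um'}^{-1}}^2\;\leq\;\normV{[\Diag_\opw]_{\um'}^{1/2}[\op]_{\um'}^{-1}}^2\,\sum_{j=1}^{m'}\frac{[\rep]_j^2}{\opw_j}\;\leq\;\tD\,\rr\,\max_{1\leq j\leq m'}\Bigl\{\frac{1}{\rw_j\opw_j}\Bigr\},
\end{equation*}
because $\sum_{j\geq1}\rw_j[\rep]_j^2\leq\rr$. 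In combination these give $\sup_{\rep\in\Frwrr}\drepn\leq\rr\,\drwn$ (the inequality already used in Corollary~\ref{res:coro:upper}) and show that both the deterministic penalty $\pen_m$ of~\eqref{part:def:pen} and its empirical version $\hpen_m$ of~\eqref{full:def:pen} are of order $\rr\,\drwnl$ at the dimension $\md$.

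With this in hand I would proceed as follows. First, replace $\Mh$ by $M^\rw_n$ and $\max_{1\leq j\leq m}[\rep]_j^2$ by $\max_{1\leq j\leq m}\rw_j^{-1}$ throughout the definitions of $M_n$, $\hM$, $\Mo$ and $\Mu$, and check that the growth requirements of Theorems~\ref{partially adaptive unknown operator}--\ref{fully adaptive unknown operator}, in particular $(\md)^3\max_{1\leq j\leq\md}\rw_j^{-1}=o(a_n\opw_{\md})$ and $(\Mo+1)^2\log n=o(n\opw_{\Mo+1})$, survive in the example configurations; this is where Assumptions~\ref{ass:reg} and~\ref{ass:reg:II} re-enter. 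Second, the reduction scheme leading to~\eqref{adap:main:ineq} is purely algebraic and does not involve $\rep$, so it transfers unchanged, and its three right--hand side terms are bounded as in the proof of Theorem~\ref{partially adaptive unknown operator}: the penalty $\pen_m$ and the bias $\bias_m$ are $\lesssim\rr\,\drwnl$ by the above, evaluated at $m=\md$; and the residual $\max_{m\leq m'\leq M_n}\vect{\absV{\hell_{m'}-\ell_\rep(\sol_{m'})}^2-\frac16\pen_{m'}}_+$ is handled by Bernstein's inequality applied to the linear stochastic part of $\hell_{m'}-\ell_\rep(\sol_{m'})$, whose variance proxy is of order $n^{-1}\normV{[\rep]_{\um'}^t[\op]_{\um'}^{-1}}^2\lesssim n^{-1}\rr\max_{1\leq j\leq m'}\{(\rw_j\opw_j)^{-1}\}$, the remaining piece being governed by the deviation of $[\hop]_{\um'}$ from $[\op]_{\um'}$. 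Third, transfer from $(M_n,\pen)$ to $(\hM,\hpen)$ as in Theorem~\ref{fully adaptive unknown operator}: on the event on which $[\hop]_{\um}$ stays close to $[\op]_{\um}$ for all $m\leq M^\rw_n$, whose complement has probability $O(n^{-c})$ for every $c>0$ by $\sup_{j\geq1}\Ex\absV{e_j(Z)}^{20}\leq\eta^{20}$ together with $(\Mo+1)^2\log n=o(n\opw_{\Mo+1})$, one has $\Mo\leq\hM\leq\Mu$ and $\pen_m\asymp\hpen_m$, while on the complement the crude bound $\absV{\hell_{\whm}-\ell_\rep(\sol)}^2\lesssim n\,\rr$ (from the threshold $\normV{[\hop]^{-1}_{\um}}\leq\sqrt n$ and $\max_{1\leq j\leq M^\rw_n}\rw_j^{-1}\leq n$) makes the contribution negligible. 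Collecting these bounds gives $\Ex\absV{\hell_{\whm}-\ell_\rep(\sol)}^2\lesssim\rr\,\drwnl$ uniformly over $\op\in\cTdDw$, $P_{U|W}\in\cU_\sigma^\infty$, $\sol\in\Fswsr$ and $\rep\in\Frwrr$, with the constant absorbing $\sr$, $\rr$, $\sigma$, $\eta$ and the link constants, which is the claim.

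The one genuine difficulty I anticipate is the bookkeeping in the second step: one must verify that passing to the supremum over $\rep\in\Frwrr$ costs exactly one power of $\rr$ and interacts harmlessly with the inner maximum over $m'$ in the contrast $\Psi_m$, i.e., that $\sup_{\rep\in\Frwrr}\max_{m\leq m'\leq M_n}\absV{\ell_\rep(\sol_{m'}-\sol)}^2$ and $\sup_{\rep\in\Frwrr}\max_{m\leq m'\leq M_n}\normV{[\rep]_{\um'}^t[\op]_{\um'}^{-1}}^2$ remain of the orders $\rr\,\drwnl$ and $n\,\rr\,\drwnl$ at the dimension $\md$. Because the weighted--norm estimates above are monotone in $m'$, this ultimately reduces to the inequality $\sup_{\rep\in\Frwrr}\drepn\leq\rr\,\drwn$ recorded in Corollary~\ref{res:coro:upper}; granted that, the rest is the routine machinery already developed for Theorem~\ref{fully adaptive unknown operator}.
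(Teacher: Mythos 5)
Your proposal follows essentially the same route the paper takes: replace $\Mh$ by $M^\rw_n$ and $\max_{1\leq j\leq m}[\rep]_j^2$ by $\max_{1\leq j\leq m}\rw_j^{-1}$ in the definitions of $\hM$, $\Mo$, $\Mu$, invoke $\normV{\rep}_{1/\sw}^2\lesssim\rr$ and $\drepn\leq\rr\,\drwn$ uniformly over $\Frwrr$, and then retrace the proof of Theorem~\ref{fully adaptive unknown operator} line by line. The paper only sketches this; your elaboration of the penalty/bias/residual bounds and the crude bound on the complement event correctly fills in what "line by line" entails, so the proposal is a faithful and somewhat more explicit version of the paper's argument.
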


 \paragraph{Illustration by classical smoothness assumptions.}
Let us illustrate the cost due to adaption by considering classical smoothness assumptions as discussed in Subsection \ref{sec:sob}.
In Theorem~\ref{fully adaptive unknown operator} and Corollary~\ref{fully adaptive unknown operator:coro}, respectively, we have seen that the adaptive estimator $\hell_{\hm}$ attains within a constant the rates $\drepad$ and  $\drwad$.
Let us now present the orders of  these rates by considering the example configurations of Remark \ref{rem:ill:cases}. The proof of the following result is omitted because of the analogy with the proof of Proposition \ref{res:lower:sob}.

\begin{prop}\label{prop:smooth:adapt} Assume an iid. $n$-sample of $(Y,Z,W)$ from the model (\ref{model:NP}--\ref{model:NP2}) with conditional expectation operator $T\in\cTdDw$, error term $U$ such that $P_{U|W}\in\cU_\sigma^\infty$, and $\Ex[Y^2]>0$. Then for the example configurations of Remark \ref{rem:ill:cases} we obtain 
\\[-4ex]
\begin{enumerate}
 \item[(pp)]if in addition $3<2p+2\min(s,0)$ that  $\md\sim \big(n(1+\log n)^{-1}\big)^{1/(2p+2a)}$ and 
\begin{enumerate}
 \item[(i)] 
 $\drepnl\sim
\begin{cases}
(n^{-1}(1+\log n))^{(2p+2s-1)/(2p+2a)}, &\text{if $s-a<1/2$}\\
 n^{-1}(1+\log n)^2, &\text{if $s-a=1/2$}\\
 n^{-1}(1+\log n), &\text{if $s-a>1/2$},
\end{cases}
$ 
\item[(ii)] $\drwnl\sim
\max\big((n^{-1}(1+\log n))^{(p+s)/(p+a)} ,n^{-1}(1+\log n)\big).$
\end{enumerate}
\item[(pe)] $\md\sim \log\big(n(1+\log n)^{-(a+p)/a}\big)^{1/2a}$ 
 and 
\begin{enumerate}
 \item[(i)] 
$\drepnl\sim
(1+\log n)^{-(2p+2s-1)/(2a)}$,
\item[(ii)]
 $\drwnl\sim(1+\log n)^{-(p+s)/a}.
$
\end{enumerate}
\item[(ep)]
 $\md\sim \log\big(n(1+\log n)^{-(a+p)/p}\big)^{1/2p}$ 
 and 
\begin{enumerate}
\item[(i)] 
$ \drepnl\sim
\begin{cases}
 n^{-1}(1+\log n)^{(2a+2p-2s+1)/(2p)}, &\text{if $s-a<1/2$}\\
 n^{-1}(1+\log n)(\log \log n), &\text{if $s-a=1/2$}\\
 n^{-1}(1+\log n), &\text{if $s-a>1/2$},
\end{cases}
$
\item[(ii)] 
$\drwnl\sim\max\big(n^{-1}(\log n)^{(a+p-s)/p} ,n^{-1}(1+\log n)\big).$
\end{enumerate}
\end{enumerate}
\end{prop}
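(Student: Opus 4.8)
The plan is to derive Proposition~\ref{prop:smooth:adapt} as a purely computational corollary of Theorem~\ref{fully adaptive unknown operator} and Corollary~\ref{fully adaptive unknown operator:coro}, exactly as the authors indicate by their remark on "the analogy with the proof of Proposition~\ref{res:lower:sob}". The content that remains to be shown is: (a) the stated order of $\md = \mstarnl$ in each configuration, and (b) the resulting orders of $\drepnl$ and $\drwnl$ obtained by plugging $x=n(1+\log n)^{-1}$ into the defining formulas \eqref{m:known:operator} and \eqref{rate:known:operator}. Thus, first I would check that the side conditions of the two theorems (Assumptions~\ref{ass:reg}--\ref{ass:A2}, the requirement $(\md)^3\max_{1\le j\le\md}[h]_j^2=o(a_n\opw_{\md})$, the condition $(\Mo+1)^2\log n=o(n\opw_{\Mo+1})$, and the moment bound $\sup_j\Ex|e_j(Z)|^{20}\le\eta^{20}$) hold under the smoothness regimes (pp), (pe), (ep); for (pp) this is precisely where the extra hypothesis $3<2p+2\min(s,0)$ enters, via Remark~\ref{rem partially adaptive unknown operator} and Remark~\ref{rem fully adaptive unknown operator}, while for (pe) and (ep) those conditions are automatic.

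\textbf{Computing $\md$.}
Next I would compute $\mstarx$ for $x=a_n=n(1+\log n)^{-1}$ (up to the harmless difference between $a_n$ and $n(1+\log n)^{-1}$, which only affects lower-order log factors). By \eqref{m:known:operator}, $\mstarx$ balances $\opw_m/\sw_m$ against $x^{-1}$. In case (pp), $\opw_m/\sw_m\sim |m|^{-2a-2p}$, so the balance $|m|^{-2a-2p}\sim x^{-1}$ gives $\md\sim x^{1/(2p+2a)}=(n(1+\log n)^{-1})^{1/(2p+2a)}$, and $\ad=\astarnl\sim n^{-1}(1+\log n)$. In case (pe), $\opw_m/\sw_m\sim |m|^{-2p}\exp(-|m|^{2a})$, so one inverts $\exp(-|m|^{2a})|m|^{-2p}\sim x^{-1}$, yielding $\md\sim\log(n(1+\log n)^{-(a+p)/a})^{1/2a}$ after isolating the $|m|^{-2p}$ correction through one logarithmic iteration. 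In case (ep), $\sw_m\sim\exp(|m|^{2p})$, $\opw_m\sim|m|^{-2a}$, so $\opw_m/\sw_m\sim |m|^{-2a}\exp(-|m|^{2p})\sim x^{-1}$ gives $\md\sim\log(n(1+\log n)^{-(a+p)/p})^{1/2p}$ by the same iteration. These are exactly the claimed expressions.

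\textbf{Computing the rates.}
With $\md$ and $\ad$ in hand, I would evaluate $\drepnl=\max\{\ad\sum_{j=1}^{\md}[h]_j^2/\opw_j,\ \sum_{j>\md}[h]_j^2/\sw_j\}$. Using $[h]_j^2\sim|j|^{-2s}$ (part (i)) and the regime-specific $\opw_j,\sw_j$, the first sum $\ad\sum_{j\le\md}|j|^{2a-2s}$ behaves like $\ad\,\md^{2a-2s+1}$ when $s-a<1/2$, like $\ad\log\md$ when $s-a=1/2$, and like $\ad$ (a convergent series) when $s-a>1/2$; the tail sum $\sum_{j>\md}|j|^{-2s}/\sw_j$ is of smaller or equal order in each case. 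Substituting $\ad\sim n^{-1}(1+\log n)$ and $\md$ from above produces the three-case formulas for (pp)(i) and (ep)(i), and the single formula for (pe)(i) where $\md$ is logarithmic so the first sum dominates throughout. For part (ii), $\drwnl=\ad\max_{1\le j\le\md}(\rw_j\opw_j)^{-1}$; with $\rw_j\sim|j|^{2s}$ (or $\exp(|j|^{2s})$) and $\opw_j\sim|j|^{-2a}$ or $\exp(-|j|^{2a})$, the maximum over $j\le\md$ is attained either at $j=1$ or at $j=\md$ depending on the sign of $s-a$, giving the stated $\max(\cdot,n^{-1}(1+\log n))$ expressions. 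The only genuinely delicate point — and the step I expect to be the main obstacle — is tracking the logarithmic corrections precisely: one must be careful that the replacement of $x=n$ by $x=n(1+\log n)^{-1}$ in $\mstarx$ feeds exactly one extra factor of $(1+\log n)$ into $\ad$ and hence into $\drepnl$, while in the purely logarithmic regimes (pe), (ep) the effect on $\md$ is only through the argument of an outer logarithm and therefore negligible to leading order; getting the boundary cases $s-a=1/2$ right (the $(1+\log n)^2$ and $(1+\log n)(\log\log n)$ terms) requires combining the $\log\md$ from the truncated series with the $\log n$ already present in $\ad$. Everything else is routine asymptotic bookkeeping parallel to Proposition~\ref{res:lower:sob}, so I would present it by analogy and spell out only the boundary-case arithmetic.
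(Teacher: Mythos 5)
Your plan is exactly what the authors intend by their remark that the proof is ``omitted because of the analogy with the proof of Proposition~\ref{res:lower:sob}'': one checks that Theorem~\ref{fully adaptive unknown operator} and Corollary~\ref{fully adaptive unknown operator:coro} apply (with the extra hypothesis $3<2p+2\min(s,0)$ exactly discharging the condition $(\md)^3\max_{1\le j\le\md}[h]_j^2=o(a_n\opw_{\md})$ in case (pp), cf.\ Remark~\ref{rem partially adaptive unknown operator}), then recomputes $\mstarx$, $\astarx$ and the two competing sums in \eqref{rate:known:operator} with the shifted argument $x=n(1+\log n)^{-1}$. Your bookkeeping of $\md$, $\ad$ and the boundary cases $s-a=1/2$ is right, and your observation that in (pe) and (ep) the shift only enters $\md$ through the argument of an outer logarithm (hence is lower-order) is the correct way to see why those $\md$'s are as stated.

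One concrete misstatement: you claim that in case (pe)(i) ``the first sum dominates throughout.'' It is in fact the \emph{tail} (bias) term $\sum_{j>\md}[h]_j^2/\sw_j$ that dominates there. This is already visible in the paper's own proof of Proposition~\ref{res:lower:sob}, which for (pe) gives the tail $\sim(\log n)^{(-2p-2s+1)/(2a)}$ and shows the variance term is only $\lesssim$ that. In the adaptive version the gap is even clearer: a Laplace estimate gives $\sum_{j\le\md}j^{-2s}\exp(j^{2a})\sim\md^{\,1-2a-2s}\exp(\md^{2a})$, and plugging in $\exp(\md^{2a})\sim n(1+\log n)^{-(a+p)/a}$ and $\ad\sim n^{-1}(1+\log n)$ yields a variance term of order $(\log n)^{-(2p+2s+2a-1)/(2a)}$, a full factor $(\log n)^{-1}$ below the tail $(\log n)^{-(2p+2s-1)/(2a)}$. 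Since you are taking a maximum of the two terms you still land on the stated rate, but the dominance is reversed from what you wrote; getting it right also explains why (pe)(i) has no case split in $s-a$ (the tail, unlike the truncated series, never collapses to a parametric order). Your dominance claims for (pp) (comparable orders) and (ep) (variance dominates) are correct, again in agreement with the paper's non-adaptive computations.

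Beyond that, the proposal is sound and would compile into a correct proof once the (pe) dominance is fixed and the boundary arithmetic you defer is spelled out.
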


Let us revisit Examples \ref{exp:evaluation} and \ref{exp:average}. In the following, we apply the general theory to adaptive pointwise estimation and adaptive estimation of averages of the structural function $\sol$.

\begin{example}
Consider the point evaluation functional  $\ell_{\rep_{t_0}}(\sol)=\sol(t_0)$, $t_0\in[0,1]$, introduced in Example \ref{exp:evaluation}. 
In this case, the estimator $ \hell_{\hm}$  with dimension parameter $\hm$ selected as a minimizer of criterion  (\ref{full:def:model}--\ref{full:def:contrast}) writes as
\begin{equation*}
\hsol_{\hm}(t_0):=
\left\{\begin{array}{lcl} 
[e(t_0)]_{\underline{\hm}}^t[\hop]_{\underline{\hm}}^{-1} [\widehat{g}]_{\underline{\hm}}, && \mbox{if $[\hop]_{\underline{\hm}}$ is nonsingular and }\normV{[\hop]^{-1}_{\underline{\hm}}}\leq \sqrt n,\\
0,&&\mbox{otherwise}
\end{array}\right.
\end{equation*}
where $\hsol_m$ is an estimator proposed by \cite{Johannes2009}.
 Then $\hsol_{\hm}(t_0)$ attains within a constant the rate of convergence $\mathcal R^{\rep_{t_0}}_{\text{adapt}}$. Applying Proposition~\ref{prop:smooth:adapt} gives
\begin{enumerate}
 \item[(pp)] $\mathcal R^{\rep_{t_0}}_{n(1+\log n)^{-1}}\sim \big(n^{-1}(1+\log n)\big)^{(2p-1)/(2p+2a)}$,
 \item[(ep)] $\mathcal R^{\rep_{t_0}}_{n(1+\log n)^{-1}}\sim (1+\log n)^{-(2p-1)/(2a)}$,
 \item[(ep)] $\mathcal R^{\rep_{t_0}}_{n(1+\log n)^{-1}}\sim n^{-1}(1+\log n)^{(2a+2p+1)/(2p)}$.\hfill$\square$
\end{enumerate}

\end{example}

\begin{example}
Consider the linear functional $\ell_{\rep}(\sol)=\int_0^b \sol(t)dt$ with  representer $\rep:=\1_{[0,b]}$ introduced in Example \ref{exp:average}. 
 The mean squared error  of the estimator $ \hell_{\hm}=\int_0^b \hsol_{\hm}(t)dt$ is bounded up to a constant by  $\drepad$. Applying Proposition~\ref{prop:smooth:adapt} gives
\begin{enumerate}
 \item[(pp)] $\drepnl\sim
\left\{\begin{array}{lcl} 
 (n^{-1}(1+\log n))^{(2p+1)/(2p+2a)}, && \mbox{if $a>1/2$},\\
 n^{-1}(1+\log n)^2, && \mbox{if $a=1/2$},\\
 n^{-1}(1+\log n), && \mbox{otherwise,}
\end{array}\right.$
 \item[(ep)]  $\drepnl\sim (1+\log n)^{-(2p+1)/(2a)}$,
 \item[(ep)] $\drepnl\sim
\left\{\begin{array}{lcl} 
 n^{-1}(1+\log n)^{(2a+2p-1)/(2p)}, && \mbox{if $a>1/2$},\\
 n^{-1}(1+\log n)(\log\log n), && \mbox{if $a=1/2$},\\
 n^{-1}(1+\log n), && \mbox{otherwise.}
\end{array}\right.$
\end{enumerate}

 \hfill$\square$
\end{example}

 \appendix
\section{Appendix}\label{app:proofs}
 \subsection{Proof of the lower bound given in  Section \ref{sec:lower}.}
\begin{proof}[\textcolor{darkred}{\sc Proof of Theorem \ref{res:lower}.}]
 Define  the function $\sol_*:=
\bigg(\frac{\zeta\,\kappa\,\astarn}{\sum_{l=1}^{\mstarn}[\rep]_l^2\Opw_l^{-1}}\bigg)^{1/2}\sum\limits_{j=1}^{\mstarn}[\rep]_j\Opw_j^{-1}\basZ_{j}$ with
$\zeta:=\min(1/(2\Opd),\sr)$. Since $(\sw_j^{-1}\Opw_j)_{j\geq1}$ is nonincreasing and  by using the definition of $\kappa$ given in \eqref{eta:known:operator} it follows
that $\sol_*$ and in particular $\sol_\theta:= \theta \sol_*$ for $\theta\in\{-1,1\}$ belong to $\cF_\sw^\sr$.
 Let $V$ be a Gaussian random variable with mean
zero and variance one ($V \sim \cN(0,1)$) which is independent of
$(Z,W)$. Consider $U_\theta:= [T\sol_{\theta}](W)-\sol_{\theta}(Z) +V $,
then $P_{U_\theta|W}$ belongs to $\cU_\sigma$ for all $ \sigma^4\geq
(\sqrt 3+4\sr \sum_{j\geq 1}\sw_j^{-1}\eta^2)^2$, which can be realized as follows. Obviously, we have $\Ex[
U_\theta|W]=0$. Moreover, we have $\sup_j\Ex[\basZ_j^4(Z)|W]\leq \eta^4$ implies $\Ex[\sol_\theta^4(Z)|W]\leq 
\sr^2 \big(\sum_{j\geq 1}\sw_j^{-1}\big)^2\Ex[\basZ^4_j(Z)|W]\leq
\sr^2\eta^4(\sum_{j\geq 1}\sw_j^{-1})^2$ and thus,
$|[\Op\sol_\theta](W)|^4\leq \Ex
[\sol_\theta^4(Z)|W]\leq \sr^2\eta^4(\sum_{j\geq 1}\sw_j^{-1})^2$. From the
last two bounds we deduce $\Ex [U_\theta^4|W]\leq 16\Ex[\sol_\theta^4(Z)|W]+6\Var(\sol_\theta(Z)|W)+3\leq(\sqrt 3+4\sr\,\eta^2\sum_{j\geq 1}\sw_j^{-1})^2$. Consequently, for each $\theta$
iid. copies $(Y_i,Z_i,W_i)$, $1\leq i\leq n$, of $(Y,Z,W)$ with
$Y:=\sol_{\theta}(Z)+U_\theta$ form an $n$-sample of the model
(\ref{model:NP}--\ref{model:NP2}) and we denote their joint
distribution by $P_{\theta}$ and by $\Ex_\theta$ the expectation with respect to $P_\theta$.  In case of $P_\theta$ the conditional
distribution of $Y$ given $W$ is  Gaussian with mean $
[T\sol_\theta](W)$ and variance $1$.  The log-likelihood of ${P}_{1}$ with respect to ${P}_{-1}$ is given by
\begin{equation*}
\log\Bigl(\frac{d{P}_{1}}{d{P}_{-1}}\Bigr)=\sum_{i=1}^n 2(Y_i - [\Op\sol_*](W_i)) [\Op\sol_*](W_i) + \sum_{i=1}^n  2|[\Op\sol_*](W_i)|^2.
\end{equation*}
Since $\Op\in\Opwd$ the Kullback-Leibler divergence satisfies $KL(P_{1},P_{-1})\leq \Ex_{1}[\log(d{P}_{1}/d{P}_{-1})] =  2n \normV{\Op\sol_*}^2_W\leq2nd \normV{\sol_*}^2_\opw$.
It is well known that the
 Hellinger distance $H(P_{1},P_{-1})$ satisfies
 $H^2(P_{1},P_{-1}) \leqslant KL(P_{1},P_{-1})$  and thus, employing again the definition of $\kappa$  we have
\begin{equation}\label{pr:lower:e3}
H^2(P_{1},P_{-1}) \leqslant  2n \Opd\sum_{j=1}^{\mstarn}[\sol_*]_{j}^2 \Opw_{j} = 2n\Opd\frac{\zeta\,\kappa\,\astarn}{\sum_{l=1}^{\mstarn}[\rep]_l^2\Opw_l^{-1}}\sum\limits_{j=1}^{\mstarn}\frac{[\rep]_j^2}{\Opw_j}
=2\Opd\zeta \frac{\kappa \;\astarn}{n^{-1}}\leq 2\,\Opd\,\zeta \leq 1.
\end{equation} 
Consider the  Hellinger affinity $\rho(P_{1},P_{-1})= \int \sqrt{dP_{1}dP_{-1}}$ then  for any estimator $\breve{\ell}$ it holds 
\begin{align}\nonumber
\rho(P_{1},P_{-1})&\leqslant 
\int \frac{|\breve{\ell}-\ell_{h}(\sol_1)|}{2|\ell_{h}(\sol_*)|} 
\sqrt{dP_1dP_{-1}} 
+
\int \frac{|\breve{\ell}-\ell_{h}(\sol_{-1})|}{2|\ell_{h}(\sol_*)|} 
\sqrt{dP_{1}dP_{-1}} 
\\\label{pr:lower:e4}
&\leq
\Bigl( \int  \frac{|\breve{\ell}-\ell_{h}(\sol_{1})|^2}
{4|\ell_{h}(\sol_*)|^2}dP_{1}\Bigr)^{1/2}
+
\Bigl( 
\int  \frac{|\breve{\ell}-\ell_{h}(\sol_{-1})|^2}
{4|\ell_{h}(\sol_*)|^2}dP_{-1}\Bigr)^{1/2}.
\end{align}
Due to the identity $\rho(P_{1},P_{-1})=1-\frac{1}{2}H^2(P_{1},P_{-1})$   combining  \eqref{pr:lower:e3} with 
 \eqref{pr:lower:e4} yields
\begin{equation}\label{pr:lower:e5}
\Ex_{1}|
\breve{\ell}
-
\ell_{h}(\sol_{1})|^2
+ 
\Ex_{-1}
|\breve{\ell}
-
\ell_{h}(\sol_{-1})|^2
\geqslant
\frac{1}{2}
|\ell_{h}(\sol_*)|^2.
 \end{equation}
Obviously,  $|\ell_{\rep}(\sol_*)|^2
=\zeta \kappa \astarn \sum\limits_{j=1}^{\mstarn}[\rep]_j^2\Opw_j^{-1}$. From \eqref{pr:lower:e5} together with the last identity    we conclude  for any possible estimator $\breve\ell$
\begin{align}\nonumber
 \sup_{\op\in\OpwdD}\sup_{P_{U|W}\in \cU_\sigma} \sup_{\sol\in \cF_\sw^\sr} &\Ex|\breve{\ell}-\ell_{\rep}(\sol)|^2 \geqslant \sup_{\theta\in \{-1,1\}} \Ex_{\theta}|\breve{\ell} -\ell_{\rep}(\sol_*^{(\theta)})|^2\\\nonumber
&\geqslant \frac{1}{2}
\Bigl\{\Ex_{1}|
\breve{\ell}-\ell_{\rep}(\sol_{1})|^2+ \Ex_{-1}|\breve{\ell}-\ell_{\rep}(\sol_{-1})|^2
\Bigr\}\\\label{eq:2}
&\geqslant \frac{ \kappa}{4}\,\min \bigg( \frac{1}{2 \Opd},  \sr\bigg)\,\astarn\sum\limits_{j=1}^{\mstarn}[\rep]_j^2\Opw_j^{-1}.
\end{align}
Consider now   $\widetilde\sol_*:=
\bigg(\frac{\zeta\,\kappa}{\sum_{l>\mstarn}[\rep]_l^2\sw_l^{-1}}\bigg)^{1/2}\sum\limits_{j>\mstarn}[\rep]_j\sw_j^{-1}\basZ_{j}$, which belongs to $\cF_\sw^\sr$ since $\kappa\leq 1$ and $\zeta\leq \sr$.
Moreover,  since $(\sw_j^{-1}\Opw_j)_{j\geq1}$ is nonincreasing and  by using the definition of $\kappa$ given in \eqref{eta:known:operator} we have 
\begin{equation*}\label{eq:4}
2n \Opd\sum_{j>\mstarn}[\widetilde\sol_*]_{j}^2 \Opw_{j}=  2n\Opd\frac{\zeta\,\kappa}{\sum_{l>\mstarn}[\rep]_l^2\sw_l^{-1}}\sum\limits_{j>\mstarn}\frac{[\rep]_j^2\Opw_j}{\sw_j^2}
\leq 2\Opd\zeta \frac{\kappa}{\sw_{\mstarn}\Opw_{\mstarn}^{-1}}\leq 2\,\Opd\,\zeta \leq 1.
\end{equation*} 
Thereby, following line by line the proof of \eqref{eq:2} we obtain for any possible estimator $\breve\ell$
\begin{align}\nonumber
\sup_{\op\in\OpwdD}\sup_{P_{U|W}\in \cU_\sigma} \sup_{\sol\in \cF_\sw^\sr} &\Ex|\breve{\ell}-\ell_{\rep}(\sol)|^2 \geqslant \frac{1}{4}|\ell_{h}(\widetilde\sol_*)|^2
= \frac{ \kappa}{4}\,\min \bigg( \frac{1}{2 \Opd},  \sr\bigg)\,\sum\limits_{j>\mstarn}[\rep]_j^2\sw_j^{-1}.
\end{align}
Combining, the last estimate and (\ref{eq:2}) implies the result of the theorem,  which completes the proof. \end{proof}

\subsection{Proofs of Section \ref{sec:gen}.}\label{app:proofs:gen}
 We begin by defining and recalling notations to be used in the proofs of this section. For $m\geq 1$ recall $\sol_m=\sum_{j=1}^m[\sol_m]_{j}\basZ_j$ with
 $[\sol_m]_{\um}=[\Op]_{\um}^{-1}[g]_{\um}$  keeping in mind that $[T]_{\um}$ is  nonsingular. Then the identities  $[T(\sol-\sol_m)]_{\um}=0$ and $[\sol_m-E_m \sol]_{\um} = [T]_{\um}^{-1}[TE_m^\perp \sol]_{\um}$ hold true.  
We denote $\Xi_m:= [\widehat T]_{\um}- [T]_{\um}$ and $V_m:=[\widehat{g}]_{\um}- [\widehat T]_{\um} [\sol_m]_{\um}=n^{-1}\sum_{i=1}^n(U_i+\sol(Z_i)-\sol_m(Z_i))[f(W_i)]_{\um}$, where obviously $\Ex V_m= 0$.
Moreover, let us introduce the events 
 \begin{multline*}
\Omega_m:=\{ \normV{[\widehat{T}]^{-1}_{\um}}\leq \sqrt{n}\},\quad 
\mho_m:= \{\sqrt m \normV{\Xi_m}\normV{[T]_{\um}^{-1}}\leq 1/2\}\\
\Omega_m^c:=\{ \normV{[\widehat{T}]^{-1}_{\um}}> \sqrt{n}\}\quad\mbox{ and }\quad  \mho_m^c=\{\sqrt m\normV{\Xi_m}\normV{[T]_{\um}^{-1}}> 1/2\}.
\end{multline*}
 Observe that if $\sqrt m\normV{\Xi_m}\normV{[T]_{\um}^{-1}}\leqslant 1/2$  then the identity $[\hop]_{\um}= [\Op]_{\um}\{I+[\Op]^{-1}_{\um}\Xi_m\}$ implies by the usual Neumann series argument that $\normV{[\hop]^{-1}_{\um}} \leqslant 2\normV{[\Op]^{-1}_{\um}}$. Thereby, if $\sqrt{n} \geqslant 2\normV{[\Op]^{-1}_{\um}}$ we have $\mho_m \subset\Omega_m$. These results will be used below without further reference.
We shall prove at the end of this section four technical Lemmata (\ref{app:gen:upper:l2} -- \ref{pr:minimax:l2}) which are used in the following proofs. 
Furthermore, we will denote by $C$ universal numerical constants and by $C(\cdot)$ constants depending only on the arguments. In both cases, the values of the constants may change from line to line.
\paragraph{Proof of the consistency.}
\begin{proof}[\textcolor{darkred}{\sc Proof of Proposition \ref{res:gen:prop:cons}.}] 
Consider for all $m\geq 1$ the decomposition
  \begin{multline}\label{pr:cons:e1}
\Ex |\hell_m  -\ell_\rep(\sol)|^2= \Ex |\hell_m  -\ell_\rep(\sol)|^2\1_{\Omega_m} + |\ell_\rep(\sol)|^2P(\Omega_m^c)\\\hfill\leq 2 \Ex|\hell_m  -\ell_\rep(\galsol)|^2\1_{\Omega_m}
+ 2 |\ell_\rep(\galsol-\sol)|^2 +|\ell_\rep(\sol)|^2P(\Omega_m^c)
  \end{multline}
where we bound each term separately. Let 
$\overline\mho_m:= \{ \normV{\Xi_m}\normV{[T]_{\um}^{-1}}\leq 1/2\}$
and let $\overline\mho_m^c$ denote its complement.
By employing   $\normV{[\hop]_{\um}^{-1}}\1_{\overline\mho_m}\leq 2\normV{[\op]_{\um}^{-1}}$ and
$\normV{[\hop]_{\um}^{-1}}^2\1_{\Omega_m}\leq n$ it follows that
\begin{multline*}
  |\hell_m-\ell_\rep(\galsol)|^2\1_{\Omega_m}\leq 2\abs{[\rep]_{\um}^t[\op]_{\um}^{-1}V_m}^2+
  2\big|[\rep]_{\um}^t[\op]_{\um}^{-1}\Xi_m[\hop]_{\um}^{-1}V_m\big|^2\1_{\Omega_m}(\1_{\overline\mho_m}+\1_{\overline\mho_m^c})\\
\leq 2|[\rep]_{\um}^t[\op]_{\um}^{-1}V_m|^2 +  2\normV{[\rep]_{\um}^t[\op]_{\um}^{-1}}^2 \set{4\normV{[\op]_{\um}^{-1}}^2 \normV{\Xi_m}^2\normV{V_m}^2 + n \normV{\Xi_m}^2\normV{V_m}^2\1_{\overline\mho_m^c}}.
\end{multline*}
Thus, from estimate \eqref{app:gen:upper:l2:e1:1}, \eqref{app:gen:upper:l2:e2:1}, and \eqref{app:gen:upper:l2:e3} in Lemma \ref{app:gen:upper:l2} we infer
\begin{multline}\label{pr:cons:e3}
  \Ex|\hell_m-\ell_\rep(\galsol)|^2\1_{\Omega_m}\leq C(\sw) n^{-1} \normV{[\rep]_{\um}^t[\op]_{\um}^{-1}}^2 \eta^4\big(\sigma^2 +\normV{\sol-\galsol}_\sw^2\big)\\
\times \Big\{1+  \frac{m^3}{n}\normV{[\op]_{\um}^{-1}}^2+ m^3P^{1/4}(\overline\mho_m^c)\Big\}.
\end{multline}
Let $m=m_n$ satisfying $m_n^{-1}=o(1)$, $m_n=o(n)$, and condition \eqref{eq:5}. 
We have $ \sqrt{n}\geq 2  \normV{[\Op]_{\umn}^{-1}}$ and thus,
$\Omega_{m_n}^c\subset \overline\mho_{m_n}^c$ for $n$ sufficiently large. From Lemma \ref{pr:minimax:l1} it follows that 
$m_n^{12}P(\overline\mho_{m_n}^c)\leq  2\exp\big\{- m_n\,(32\eta^2n^{-1}{m_n}^3\normV{[\Op]_{\umn}^{-1}}^2)^{-1} +14\log m_n \big\}=O(1)$ as $n\to\infty$ since $m_n(4n^{-1}m_n^3\normV{[\Op]_{\umn}^{-1}}^2)^{-1}\leq 4\eta^2 n$ for $n$ sufficiently large. Thus, in particular
$P(\Omega_{m_n}^c)=o(1)$. Consequently, as $n\to\infty$ we obtain $\Ex|{\hell_{m_n}}-\ell_\rep(\sol_{m_n})|^2\1_{\Omega_{m_n}}=o(1)$ since $\normV{[\rep]_{\umn}^t [T]_{\umn}^{-1}}^2=o(n)$. Moreover, as $n\to\infty$ it holds $|\ell_\rep(\sol_{m_n})-{\ell_\rep(\sol)}|^2\leq \|\rep\|_{1/\sw}\normV{\sol-\sol_{m_n}}_\sw=o(1)$  due to condition \eqref{eq:cons:solm}, and $|\ell_\rep(\sol)|^2P(\Omega_{m_n}^c)\leq \|\rep\|_{1/\sw}\|\sol\|_{\sw}P(\Omega_{m_n}^c)=o(1)$. This together with decomposition \eqref{pr:cons:e1} proves the result.
\end{proof}

\begin{proof}[\textcolor{darkred}{\sc Proof of Corollary \ref{res:gen:coro:cons}.}] The assertion follows directly from Proposition \ref{res:gen:prop:cons}, it only remains to
  check  conditions \eqref{eq:cons:solm} and \eqref{eq:5}.
 We make use of decomposition $\normV{\sol-\sol_m}_\sw\leq \normV{E_m^\perp
  \sol}_\sw +\normV{E_m \sol -\sol_m}_\sw $. 
As in the proof of  Lemma \ref{app:gen:upper:l3} we conclude   $\|E_m\sol-\sol_m\|_\sw^2\leqslant\normV{E_m^\perp \sol   }_\sw \sup_m \sup_{\normV{\phi}_\sw=1}\normV{T^{-1}_{m} F_m T E_m^\perp \phi}_\sw\leq Dd\normV{E_m^\perp \sol}_\sw$.
By using Lebesgue's dominated convergence theorem we observe $\normV{E_m^\perp  \sol}_\sw=o(1)$ as $m\to\infty$ and hence \eqref{eq:cons:solm} holds.
Condition $\Op\in\OpwdD$ implies $\normV{[\rep]_{\um}^t [T]_{\um}^{-1}}^2\leq \OpD \sum_{j=1}^m{[\rep]_{j}^2}{\Opw_j}^{-1}$ and  $\normV{[\Op]^{-1}_{\um}}^2\leqslant \OpD\Opw_m^{-1}$ for all $m\geq 1$ since  $\Opw$ is nonincreasing.
 Thereby, condition \eqref{eq:6} implies  condition \eqref{eq:5}, which completes the proof.
\end{proof}

 \paragraph{Proof of the upper bound.}
\begin{proof}[\textcolor{darkred}{\sc Proof of Theorem \ref{res:upper}.}] The proof is based on  inequality \eqref{pr:cons:e1}.
Applying estimate  \eqref{app:gen:upper:l3:e2} in Lemma \ref{app:gen:upper:l3}  gives  
$\absV{\ell_\rep(\galsol-\sol)}^2\leq 2\sr\,\{\sum_{j>m}[\rep]_j^2\sw_j^{-1}+
\opD\opd\,\opw_m\sw_m^{-1}\sum_{j=1}^m[\rep]_j^2\opw_j^{-1}\}$ for all $\sol\in\Fswsr$   and $\rep\in\cF_{1/\sw}$. Since $|\ell_\rep(\sol)|^2\leq \normV{\sol}_\sw^2\normV{\rep}_{1/\sw}^2$ and $\normV{\sol}_\sw^2\leq \sr$  we conclude
 \begin{multline}\label{pr:theo:upper:e2}
\Ex |\hell_m  -\ell_\rep(\sol)|^2\leq 2 \Ex|\hell_m  -\ell_\rep(\galsol)|^2\1_{\Omega_m}\\
+4\sr\,\Big\{\sum_{j>m}[\rep]_j^2\sw_j^{-1}+
\opd\opD\frac{\opw_m}{\sw_m}\sum_{j=1}^m[\rep]_j^2\opw_j^{-1}\Big\}+
\sr\normV{\rep}_{1/\sw}^2P(\Omega_m^c).
  \end{multline}

By employing   $\normV{\Xi_m[\rop]_{\um}^{-1}}^2\1_{\mho_m}\leq m^{-1}$ and
$\normV{[\hop]_{\um}^{-1}}^2\1_{\Omega_m}\leq n$ it follows that
\begin{multline*}
  |\hell_m-\ell_\rep(\galsol)|^2\1_{\Omega_m}
\leq 2|[\rep]_{\um}^t[\op]_{\um}^{-1}V_m|^2 + 2m^{-1}  \normV{[\rep]_{\um}^t[\op]_{\um}^{-1}}^2 \normV{V_m}^2\\
  \hfill+ 2n \normV{[\rep]_{\um}^t[\op]_{\um}^{-1}}^2 \normV{\Xi_m}^2\normV{V_m}^2\1_{\mho_m^c}.
\end{multline*}

Due to $\op\in\opwdD$  and $\sol\in\Fswsr$ we have $\normV{[\rep]_{\um}^t[\op]_{\um}^{-1}}^2\leq
\opD\sum_{j=1}^m\fou{\rep}_j^2/\opw_j$ and $\normV{\sol-\galsol}_\sw^2\leq 2\,\sr\,(1+ \,\opD \, \opd)$ (cf. \eqref{app:gen:upper:l3:e1} in Lemma \ref{app:gen:upper:l3}), respectively. Thereby, similarly to the proof of Proposition \ref{res:gen:prop:cons} we get 
\begin{equation*}
  \Ex|\hell_m-\ell_\rep(\galsol)|^2\1_{\Omega_m}\leq C(\sw) \opD (\sigma^2+\eta^2\opd\opD\sr)  n^{-1}\sum_{j=1}^m\fou{\rep}_j^2\opw_j^{-1}\set{1+ m^3 P(\mho_m^c)^{1/4}}.
\end{equation*}
Combining the last estimate with \eqref{pr:theo:upper:e2} yields
 \begin{multline}\label{pr:theo:upper:e3}
\Ex |\hell_m  -\ell_\rep(\sol)|^2\leq  C(\sw) \opD (\sigma^2+\eta^2\opd\opD\sr) \max\Big\{\sum_{j>m}[\rep]_j^2\sw_j^{-1}, \max\Big(\frac{\opw_m}{\sw_m}, n^{-1}\Big)\sum_{j=1}^m[\rep]_j^2\opw_j^{-1}\Big\}\\
\times\set{1 +  m^3 P(\mho_m^c)^{1/4}} +\sr\normV{\rep}_{1/\sw}^2P(\Omega_m^c).
  \end{multline}
Consider now the optimal choice $m=\mstarn$ defined in \eqref{m:known:operator}, then we have 
 \begin{multline*}
\Ex |\hell_{\mstarn}  -\ell_\rep(\sol)|^2\leq C(\sw) \opD \set{\sigma^2+\sr\big(\eta^2\opd\opD+\normV{\rep}_{1/\sw}^2\big)}  \drepn\\
\times \set{1+ (\mstarn)^3 P(\mho_{\mstarn}^c)^{1/4}  + (\drepn)^{-1}P(\Omega_{\mstarn}^c)}
  \end{multline*}
and hence, the assertion follows by making use of Lemma \ref{pr:minimax:l2}.
\end{proof}

 \paragraph{Technical assertions.}\hfill\\[1ex]
The following paragraph gathers technical results used in the proofs of Section  \ref{sec:gen}. Below we consider the set $\mmS^m:=\{s\in\R^m:\|s\|=1\}$.
\begin{lem}\label{app:gen:upper:l2} Suppose that $P_{U|W}\in \cU_\sigma$ and that the joint distribution of  $(Z,W)$ satisfies Assumption \ref{ass:A1}.  If in addition $\sol\in \cF_\bw^\br$ with $\sw$ satisfying Assumption \ref{ass:reg}, then  for all $m\geq 1$ we have
\begin{gather}\label{app:gen:upper:l2:e1:1}
 \sup_{s\in \mmS^m} \Ex|s^t \,V_m|^{2} \leqslant 2n^{-1}\big( \sigma^2+C(\sw)\,\eta^2\normV{ \sol -   \sol_m}_\bw^2\big ),\\
\label{app:gen:upper:l2:e2:1}
\Ex\normV{V_m}^{4}\leqslant C(\sw)\,\big(n^{-1}m\,\eta^2(\sigma^{2}+\normV{\sol-\sol_m}_\bw^2)\big)^2,
\\\label{app:gen:upper:l2:e3}
 \Ex\normV{\Xi_m}^8\leqslant C \, \big(n^{-1}m^2\,  \eta^{2}\bigr)^4.
\end{gather}
\end{lem}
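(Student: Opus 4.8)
The plan is to treat the three moment bounds separately, exploiting the common structure that $V_m$ and $\Xi_m$ are empirical averages of i.i.d.\ centered terms, so that classical moment inequalities (Rosenthal's inequality, or the Marcinkiewicz--Zygmund inequality) reduce everything to controlling second and higher moments of the individual summands. Write $V_m=n^{-1}\sum_{i=1}^n\xi_i$ with $\xi_i:=(U_i+\sol(Z_i)-\sol_m(Z_i))[f(W_i)]_{\um}$ and $\Ex\xi_i=0$, and similarly $\Xi_m=n^{-1}\sum_{i=1}^n(\zeta_i-\Ex\zeta_i)$ with $\zeta_i:=[f(W_i)]_{\um}[e(Z_i)]_{\um}^t$. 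The key elementary observations are: (a) $\|\sol-\sol_m\|_\sw$ bounds, via $\sw$ nondecreasing with $\sw_1=1$, the ordinary $L^2_Z$-norm and hence pointwise-in-expectation the remainder $\sol(Z)-\sol_m(Z)$ only after using the basis moment bound $\sup_j\Ex[e_j^2(Z)|W]\le\eta^2$ together with $|j|^3\sw_j^{-1}=o(1)$ (this is where the constant $C(\sw)=\sum_{j\ge1}\sw_j^{-1}<\infty$, or a similar series, enters); and (b) Assumption \ref{ass:A1}(i) gives $\sup_l\Ex[f_l^4(W)]\le\eta^4$, while (ii) gives the higher-moment (Cramer) control needed for the eighth-moment bound on $\Xi_m$.

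For \eqref{app:gen:upper:l2:e1:1}, fix $s\in\mmS^m$ and compute $\Ex|s^tV_m|^2=n^{-1}\Ex|s^t\xi_1|^2$ by independence and centering. Then $s^t\xi_1=(U_1+\sol(Z_1)-\sol_m(Z_1))\,s^t[f(W_1)]_{\um}$; conditioning on $(Z_1,W_1)$ and using $\Ex[U_1^2|W_1]\le\sigma^2$ handles the $U$-part (giving $\sigma^2\Ex|s^t[f(W_1)]_{\um}|^2\le\sigma^2$ since $\{f_l\}$ is orthonormal, so $\Ex|s^t[f(W_1)]_{\um}|^2=\sum_l\Ex f_l^2(W_1)(s)_l\cdots$ — actually one uses $\Ex[\,[f(W)]_{\um}[f(W)]_{\um}^t\,]=I_m$), and the remainder term becomes $\Ex[(\sol(Z_1)-\sol_m(Z_1))^2\,|s^t[f(W_1)]_{\um}|^2]$, which by conditioning on $W_1$, expanding $\sol-\sol_m$ in the basis $\{e_j\}$, and applying $\sup_j\Ex[e_j^2(Z)|W]\le\eta^2$ and Cauchy--Schwarz is bounded by $C(\sw)\eta^2\|\sol-\sol_m\|_\sw^2$. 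The factor $2$ and the $n^{-1}$ come out directly. For \eqref{app:gen:upper:l2:e2:1}, apply a Rosenthal/Marcinkiewicz--Zygmund bound to the fourth moment of the $m$-dimensional mean $V_m$: $\Ex\|V_m\|^4\lesssim n^{-2}\big(\Ex\|\xi_1\|^2\big)^2+n^{-3}\Ex\|\xi_1\|^4$, and since $\|\xi_1\|^2=(U_1+\sol(Z_1)-\sol_m(Z_1))^2\sum_{l=1}^m f_l^2(W_1)$ we get $\Ex\|\xi_1\|^2\le m\,\eta^2(\sigma^2+\|\sol-\sol_m\|_\sw^2)$ up to $C(\sw)$ by the same conditioning argument, and the $n^{-3}$ term is lower order for $m\le n$; collecting gives the stated square. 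For \eqref{app:gen:upper:l2:e3}, apply the analogous bound for the eighth moment of the matrix-valued mean $\Xi_m$ (viewing it entrywise or via $\|\cdot\|\le\|\cdot\|_{HS}$): $\Ex\|\Xi_m\|^8\lesssim n^{-4}\big(\Ex\|\zeta_1-\Ex\zeta_1\|_{HS}^2\big)^4+\text{lower order}$, with $\Ex\|\zeta_1\|_{HS}^2=\sum_{j,l\le m}\Ex[e_j^2(Z)f_l^2(W)]\le m^2\eta^2$ using Assumption \ref{ass:A1}, and the higher-order Rosenthal terms controlled for $m^2\le n$ by Assumption \ref{ass:A1}(ii); this yields $(n^{-1}m^2\eta^2)^4$.

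The main obstacle, and the step requiring the most care, is making the constant $C(\sw)$ explicit and uniform in $m$ in the two places where the remainder $\sol(Z)-\sol_m(Z)$ or the basis functions $e_j(Z)$ are squared and integrated against a weight $f_l^2(W)$: one must pass from a weighted norm $\|\sol-\sol_m\|_\sw$ controlling $\sum_j\sw_j|[\sol-\sol_m]_j|^2$ to a bound on $\Ex[(\sol(Z)-\sol_m(Z))^2|W]=\sum_{j,k}[\sol-\sol_m]_j[\sol-\sol_m]_k\Ex[e_j(Z)e_k(Z)|W]$, and the off-diagonal terms are controlled only after a Cauchy--Schwarz step that produces exactly the series $\sum_j\sw_j^{-1}$, which converges precisely because of the condition $|j|^3\sw_j^{-1}=o(1)$ (indeed $\sw_j^{-1}=o(|j|^{-3})$ is summable) in Assumption \ref{ass:reg}. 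One also has to be slightly careful that $\sol_m\in\cE_m$ need not lie in $\cF_\sw^\sr$, but $\|\sol-\sol_m\|_\sw$ is still finite and is exactly the quantity appearing in the statement, so no extra argument about $\sol_m$ is needed here — that is deferred to Lemma \ref{app:gen:upper:l3}. The higher-moment (Rosenthal) constants are universal and pose no real difficulty; it is only the bookkeeping of $\eta$-powers and the $\sw$-dependent series that needs attention.
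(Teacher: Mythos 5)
Your overall plan mirrors the paper's: bound \eqref{app:gen:upper:l2:e1:1} by exploiting iid, centering, conditioning on $W$, a Cauchy--Schwarz expansion of $\sol-\sol_m$ in the basis $\{e_j\}$, and the summability $\sum_j\sw_j^{-1}<\infty$ coming from $|j|^3\sw_j^{-1}=o(1)$; bound \eqref{app:gen:upper:l2:e2:1} and \eqref{app:gen:upper:l2:e3} with a Rosenthal-type moment inequality (the paper applies Petrov's Theorem~2.10 componentwise, you apply a vector/matrix Rosenthal --- these are equivalent after the elementary step $\|V_m\|^4\le m^2\sup_j[V_m]_j^4$ and $\|\Xi_m\|^8\le m^8\sup_{j,l}[\Xi_m]_{j,l}^8$). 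Your treatment of \eqref{app:gen:upper:l2:e1:1} and \eqref{app:gen:upper:l2:e3} is essentially correct.

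There is, however, a genuine gap in your treatment of \eqref{app:gen:upper:l2:e2:1}. You dismiss the $n^{-3}\Ex\|\xi_1\|^4$ Rosenthal remainder as ``lower order for $m\le n$'', but this presupposes that $\Ex\|\xi_1\|^4$ is already under control, and that is precisely the delicate point. Expanding $\|\xi_1\|^4=(U+\sol(Z)-\sol_m(Z))^4\bigl(\sum_{l\le m}f_l^2(W)\bigr)^2$ and conditioning, you would need a bound on $\Ex[(\sol(Z)-\sol_m(Z))^4\mid W]$, which after the Cauchy--Schwarz expansion in $\{e_l\}$ requires control of $\Ex[e_l^4(Z)\mid W]$. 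Assumption~\ref{ass:A1}(i) gives only conditional \emph{second} moments of $e_l(Z)$, so no such bound is available. The paper circumvents this entirely by a key identity you do not use: since $[T(\sol-\sol_m)]_{\um}=0$, one can write, for each $j\le m$,
\begin{equation*}
(\sol(Z)-\sol_m(Z))\,f_j(W)=\sum_{l\ge1}[\sol-\sol_m]_l\bigl(e_l(Z)f_j(W)-[T]_{j,l}\bigr),
\end{equation*}
a linear combination of \emph{centered} products, whose fourth moments are controlled by the Cramer condition in Assumption~\ref{ass:A1}(ii) (with $k=4$). A single Cauchy--Schwarz then yields $\Ex|(\sol(Z)-\sol_m(Z))f_j(W)|^4\le C(\sw)\eta^4\|\sol-\sol_m\|_\sw^4$ uniformly in $j$, from which \eqref{app:gen:upper:l2:e2:1} follows (and, if you insist on the vector Rosenthal, this also yields $\Ex\|\xi_1\|^4\lesssim m^2\eta^8(\sigma^2+\|\sol-\sol_m\|_\sw^2)^2$, after which your $n^{-3}$ term is indeed dominated). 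You mention that $\Ex\xi_i=0$, but that single scalar fact is not enough; the functional identity displayed above is what lets Assumption~\ref{ass:A1}(ii) apply term by term, and it is the ingredient your plan is missing.
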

 \begin{proof}[\textcolor{darkred}{\sc Proof.}] 
Proof of \eqref{app:gen:upper:l2:e1:1}.
Since $(\{ U_i+\sol(Z_i) -   \sol_m(Z_i)\} \sum_{j=1}^m s_j f_{j}(W_i))$,  $1\leqslant i\leqslant n,$ are iid. with mean zero we have $\Ex|s^t \,V_m |^{2}=n^{-1}\Ex |\{U+ \sol(Z) -   \sol_m(Z)\} \sum_{j=1}^m s_j f_{j}(W)|^2$. Then  \eqref{app:gen:upper:l2:e1:1} follows from $\Ex [U^2|W]\leqslant (\Ex[U^4|W])^{1/2}\leqslant \sigma^2$ and  from Assumption \ref{ass:A1} (i), i.e., $\sup_{j\in\N}\Ex [e_j^2(Z)|W] \leqslant \eta^2$. Indeed, applying condition $|j|^3\bw_j^{-1}=o(1)$ (cf. Assumption \ref{ass:reg}) gives $\sum_{j\geq 1}\sw_j^{-1}\leq C(\sw)$ and thus,
\begin{multline*}\Ex |\{ \sol(Z) -   \sol_m(Z)\} \sum_{j=1}^m s_j f_{j}(W)|^2\leqslant \normV{\sol-\sol_m}_\bw^2 \sum_{l=1}^\infty\bw_l^{-1} \Ex |e_l(Z)\sum_{j=1}^m s_j f_{j}(W)|^2\\\leq C(\sw)\, \eta^2 \normV{\sol-\sol_m}_\bw^2 \,\sum_{j=1}^m s_j^2 = C(\sw)\,\eta^2\normV{\sol-\sol_m}_\bw^2.
\end{multline*}

Proof of  \eqref{app:gen:upper:l2:e2:1}. Observe  that  for each $1\leqslant j \leqslant m$,  $(\{U_i+\sol(Z_i) -   \sol_m(Z_i)\}f_j(W_i))$,  $1\leqslant i\leqslant n,$ are iid. with mean zero. It follows from  Theorem 2.10 in \cite{Petrov1995} that $\Ex\normV{V_m}^{4}\leq C n^{-2}m^2 \sup_{j\in\N}\Ex | \{U+\sol(Z) -   \sol_m(Z)\} f_j(W)|^4$. Thereby, \eqref{app:gen:upper:l2:e2:1} follows from $\Ex [U^4|W]\leqslant \sigma^4$ and $\sup_{j\in\N}  \Ex[f_j^4(W)]\leq \eta^4$ together with $\Ex |\{ \sol(Z) -  \sol_m(Z)\}f_{j}(W)|^4\leqslant  C(\sw)\,\eta^4  \normV{\sol-\sol_m}_\bw^4$, which can be realized as follows. Since $[T(\sol-\sol_m)]_j=0$ we have
$\{\sol(Z) - \sol_m(Z)\}f_j(W)= \sum_{l\geq 1} [\sol-\sol_m]_l \{e_l(Z)f_j(W)-[T]_{j,l}\}$. Furthermore, Assumption \ref{ass:A1} (ii), i.e., $\sup_{j,l\in\N}  \Ex |e_l(Z)f_j(W)-[T]_{j,l}|^4\leq 4!\eta^4$,  implies%
\begin{multline*}\Ex |\{ \sol(Z) - \sol_m(Z)\}f_{j}(W)|^4\leqslant 
\normV{\sol-\sol_m}_\bw^4 \Ex\Bigl|\sum_{l\geq 1}\bw_l^{-1} |e_l(Z)f_{j}(W)-[T]_{j,l}|^2\Bigr|^2\\
\leq C(\sw)\,\eta^4 \normV{\sol-\sol_m}_\bw^4.\end{multline*}

Proof of \eqref{app:gen:upper:l2:e3}.  The random variables $(e_{l}(Z_i)f_j(W_i)-[T]_{j,l})$,  $1\leqslant i\leqslant n,$ are iid. with mean zero for each $1\leqslant j,l \leqslant m$. Hence, Theorem 2.10 in \cite{Petrov1995} implies $\Ex\normV{ \Xi_m}^{8}\leq C n^{-4}m^{8} \sup_{j,l\in\N}  \Ex |e_l(Z)f_j(W)-[T]_{j,l}|^8$ and thus, the assertion follows from  Assumption \ref{ass:A1} (ii), which completes the proof.\end{proof}

\begin{lem}\label{app:gen:upper:l3}
 If $\Op\in\OpwdD$ and  $\sol\in \cF_\sw^\sr$, then for all  $m\geq1$  we have
\begin{align} \label{app:gen:upper:l3:e1:1}
\normV{E_m \sol-\sol_m}^2_{\sw}&\leqslant\OpD\,\Opd\,\sr,\\\label{app:gen:upper:l3:e1}
\normV{\sol-\sol_m}_{\sw}^2&\leqslant 2\,(1+ \,\OpD \, \Opd)\, \sr,\\
  \label{app:gen:upper:l3:e2}
|\skalarV{\rep,\sol-\sol_m}_Z|^2&\leqslant 2\,\sr\, \sum_{j>m} \frac{[\rep]_j^2}{\sw_j}  + 2\,\OpD\,\Opd\, \sr\,\frac{\Opw_m}{\sw_m} \sum_{j=1}^m\frac{[\rep]^2_j}{\Opw_j}.
\end{align}
\end{lem}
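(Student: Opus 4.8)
The three bounds all descend from the identity $[\sol_m-E_m\sol]_{\um}=[\op]_{\um}^{-1}[TE_m^\perp\sol]_{\um}$ recorded at the start of this subsection, so the plan is to first obtain one clean intermediate estimate controlling $E_m\sol-\sol_m$ and then assemble the pieces. The intermediate estimate I would aim for is
\begin{equation*}
\normV{E_m\sol-\sol_m}_{\Opw}^2=\sum_{j=1}^m\Opw_j\,[E_m\sol-\sol_m]_j^2\;\leq\;\OpD\,\Opd\,\frac{\Opw_m}{\sw_m}\,\sr .
\end{equation*}
To prove it I would rewrite the left-hand side as $\normV{[\Diag_{\Opw}]_{\um}^{1/2}[\op]_{\um}^{-1}[TE_m^\perp\sol]_{\um}}^2$, bound the matrix factor by the extended link condition \eqref{bm:link:gen}, namely $\normV{[\Diag_{\Opw}]_{\um}^{1/2}[\op]_{\um}^{-1}}^2\leq\OpD$, bound the vector factor by Bessel's inequality together with the right half of \eqref{bm:link}, giving $\normV{[TE_m^\perp\sol]_{\um}}^2\leq\normV{TE_m^\perp\sol}_W^2\leq\Opd\,\normV{E_m^\perp\sol}_{\Opw}^2$, and finally note that, since $\Opw$ is nonincreasing and $\sw$ nondecreasing by Assumption \ref{ass:reg}, one has $\Opw_j/\sw_j\leq\Opw_m/\sw_m$ for every $j>m$, whence $\normV{E_m^\perp\sol}_{\Opw}^2=\sum_{j>m}\Opw_j[\sol]_j^2\leq(\Opw_m/\sw_m)\sum_{j>m}\sw_j[\sol]_j^2\leq(\Opw_m/\sw_m)\,\sr$.

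Given this, \eqref{app:gen:upper:l3:e1:1} is immediate: since $\sw_j/\Opw_j$ is nondecreasing it is maximised at $j=m$, so $\normV{E_m\sol-\sol_m}_{\sw}^2\leq\max_{1\leq j\leq m}(\sw_j/\Opw_j)\,\normV{E_m\sol-\sol_m}_{\Opw}^2=(\sw_m/\Opw_m)\,\normV{E_m\sol-\sol_m}_{\Opw}^2$, and the factor $\sw_m/\Opw_m$ exactly cancels the one in the intermediate estimate, leaving $\OpD\,\Opd\,\sr$. For \eqref{app:gen:upper:l3:e1} I would split $\sol-\sol_m=E_m^\perp\sol+(E_m\sol-\sol_m)$, apply the elementary inequality $\normV{u+v}_{\sw}^2\leq2\normV{u}_{\sw}^2+2\normV{v}_{\sw}^2$, bound $\normV{E_m^\perp\sol}_{\sw}^2\leq\normV{\sol}_{\sw}^2\leq\sr$, and insert \eqref{app:gen:upper:l3:e1:1}. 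For \eqref{app:gen:upper:l3:e2} I would use the same splitting inside $\skalarV{\rep,\cdot}_Z$ together with $|a+b|^2\leq2|a|^2+2|b|^2$: on the tail term, Cauchy--Schwarz pairing $[\rep]_j\sw_j^{-1/2}$ with $[\sol]_j\sw_j^{1/2}$ gives $|\skalarV{\rep,E_m^\perp\sol}_Z|^2\leq\big(\sum_{j>m}[\rep]_j^2\sw_j^{-1}\big)\,\sr$; on the finite-dimensional term, since $E_m\sol-\sol_m\in\cE_m$, Cauchy--Schwarz pairing $[\rep]_j\Opw_j^{-1/2}$ with $[E_m\sol-\sol_m]_j\Opw_j^{1/2}$ gives $|\skalarV{\rep,E_m\sol-\sol_m}_Z|^2\leq\big(\sum_{j=1}^m[\rep]_j^2\Opw_j^{-1}\big)\,\normV{E_m\sol-\sol_m}_{\Opw}^2$, and the intermediate estimate closes it.

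The computations are routine and amount essentially to bookkeeping; the two points that need genuine care are (i) that one must invoke the \emph{extended} link condition \eqref{bm:link:gen} in its matrix form $\sup_m\normV{[\Diag_{\Opw}]_{\um}^{1/2}[\op]_{\um}^{-1}}^2\leq\OpD$, rather than merely \eqref{bm:link}, because it is precisely this that supplies a usable control of $[\op]_{\um}^{-1}$ once weighted by $\Opw$; and (ii) that the monotonicity directions of $\sw$ (nondecreasing) and $\Opw$ (nonincreasing) must be tracked carefully so that each of the ratios $\sw_j/\Opw_j$ and $\Opw_j/\sw_j$ is estimated at the correct endpoint of $\{1,\dots,m\}$. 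I do not anticipate any obstacle beyond this.
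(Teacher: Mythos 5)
Your proof is correct and follows essentially the same route as the paper: the key intermediate bound $\normV{E_m\sol-\sol_m}_{\Opw}^2\leq\OpD\,\Opd\,(\Opw_m/\sw_m)\,\sr$ (via the identity $[E_m\sol-\sol_m]_{\um}=-[\op]_{\um}^{-1}[\op E_m^\perp\sol]_{\um}$, the extended link condition \eqref{bm:link:gen}, the upper half of \eqref{bm:link}, and the monotonicity of $\Opw_j/\sw_j$) is precisely the paper's displayed estimate, and the three assertions are then read off from it by the same norm-comparison, $2(a^2+b^2)$ splitting, and Cauchy--Schwarz pairings that the paper uses.
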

\begin{proof}[\textcolor{darkred}{\sc Proof.}]  Consider \eqref{app:gen:upper:l3:e1:1}. 
Since  $\Op\in\OpwdD$ the identity $[E_m \sol-\sol_m]_{\um} = -[\Op]_{\um}^{-1}[\Op E_m^\perp \sol]_{\um}$ implies
$\normV{E_m\sol-\sol_m}^2_\Opw \leqslant\OpD \normV{ \Op E_m^\perp \sol}^2_W\leq \OpD\Opd\normV{E_m^\perp \sol}_\Opw^2$. Consequently, 
\begin{equation}\label{app:gen:upper:l3:e1:2}
\normV{E_m\sol-\sol_m}^2_\Opw \leqslant\OpD\, \Opd\,   \sw_m^{-1}\Opw_m \normV{ \sol}^2_\sw
\end{equation}
because $(\sw_j^{-1}\Opw_j)_{j\geq1}$ is nonincreasing and thus, $ \normV{E_m\sol-\sol_m}^2_{\sw}
\leqslant \sw_m\Opw_m^{-1}\,\normV{E_m\sol-\sol_m}^2_\Opw$. By combination of the last estimate and
\eqref{app:gen:upper:l3:e1:2} we obtain the assertion  \eqref{app:gen:upper:l3:e1:1}.  By employing the decomposition $\normV{\sol-\sol_m}^2_{\sw}\leqslant 2  \normV{\sol-E_m \sol}^2_{\sw} +2\normV{E_m \sol-\sol_m}^2_{\sw}$
the bound \eqref{app:gen:upper:l3:e1} follows from \eqref{app:gen:upper:l3:e1:1} and $ \normV{\sol-E_m\sol}^2_{\sw} \leqslant \normV{\sol}^2_\sw$. It remains to show \eqref{app:gen:upper:l3:e2}. Applying the Cauchy-Schwarz inequality gives $|\skalarV{\rep,\sol-E_m\sol}_Z|^2\leqslant \normV{\sol}_\sw^2  \sum_{j>m} [\rep]_j^2\sw_j^{-1}$
and $|\skalarV{\rep,E_m\sol-\sol_m}_Z|^2\leq\OpD\,\Opd\, \normV{\sol}_\sw^2\,\Opw_m\sw_m^{-1} \sum_{j=1}^m[\rep]^2_j\Opw_j^{-1}$ by \eqref{app:gen:upper:l3:e1:2}.
Thereby \eqref{app:gen:upper:l3:e2} follows from the inequality $|\skalarV{\rep,\sol-\sol_m}_Z|^2\leqslant 2|\skalarV{\rep,\sol-E_m\sol}_Z|^2 + 2|\skalarV{\rep,E_m\sol-\sol_m}_Z|^2 $, which completes the proof.\end{proof}

\begin{lem}\label{pr:minimax:l1} Suppose that the joint distribution of  $(Z,W)$ satisfies Assumption \ref{ass:A1}. Then for all $n\geq 1$ and $m\geq 1$ we have
\begin{equation}\label{pr:minimax:l1:e1}
P\big(m^{-2}n\normV{\Xi_m}^2\geq t \big)\leqslant 2\exp\big(-\frac{t}{8\eta^2} +2\log m \big)\quad \text{for all } 0<t\leq 4\,\eta^2 n. 
\end{equation}
\end{lem}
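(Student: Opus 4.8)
The plan is to bound the spectral norm of $\Xi_m$ by its Frobenius norm, reduce to the $m^2$ scalar entries of $\Xi_m$, and apply Bernstein's inequality entrywise. Since $\normV{\Xi_m}^2\leq\sum_{j,l=1}^m(\Xi_m)_{j,l}^2$ and on the event $\{\sum_{j,l}(\Xi_m)_{j,l}^2\geq m^2s^2\}$ at least one entry obeys $|(\Xi_m)_{j,l}|\geq s$, a union bound over the $m^2$ entries gives
\begin{equation*}
 P\big(\normV{\Xi_m}^2\geq m^2s^2\big)\leq\sum_{j,l=1}^m P\big(|(\Xi_m)_{j,l}|\geq s\big).
\end{equation*}
Taking $s^2=t/n$ turns the left-hand side into the probability appearing in \eqref{pr:minimax:l1:e1}, while the factor $m^2=\exp(2\log m)$ will produce the additive $2\log m$ in the exponent, provided each entrywise probability is at most $2\exp(-t/(8\eta^2))$ on the range $0<t\leq4\eta^2n$.

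For fixed $1\leq j,l\leq m$, the entry $(\Xi_m)_{j,l}=n^{-1}\sum_{i=1}^n X_i^{(j,l)}$ with $X_i^{(j,l)}:=e_j(Z_i)f_l(W_i)-\Ex[e_j(Z)f_l(W)]$ is an empirical mean of i.i.d. centered random variables. By Cramer's condition in Assumption \ref{ass:A1}(ii) (which also covers $k=2$, if necessary after enlarging $\eta$), the $X_i^{(j,l)}$ satisfy a Bernstein moment bound $\Ex|X_i^{(j,l)}|^k\leq\tfrac{k!}{2}\,v\,b^{k-2}$ for all $k\geq2$, with $v$ and $b$ proportional to $\eta^2$ and $\eta$ respectively. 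The classical Bernstein inequality for i.i.d. sums under Cramer's condition (see \cite{Bosq1998}) then yields, for every $s>0$,
\begin{equation*}
 P\big(|(\Xi_m)_{j,l}|\geq s\big)\leq2\exp\Big(-\frac{ns^2}{2(v+bs)}\Big).
\end{equation*}
With $s=\sqrt{t/n}$ the exponent equals $t/\big(2(v+b\sqrt{t/n})\big)$, and on $0<t\leq4\eta^2n$ the term $b\sqrt{t/n}$ is bounded by a multiple of $\eta^2$, so that $v+b\sqrt{t/n}\leq4\eta^2$ and the exponent is at least $t/(8\eta^2)$. Plugging this into the union bound above gives \eqref{pr:minimax:l1:e1}.

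The only real work lies in the bookkeeping of constants: one must fix the variance proxy $v$ and the scale $b$ in the Bernstein bound so that the sub-Gaussian regime reaches exactly $t=4\eta^2n$ and the exponent there is exactly $t/(8\eta^2)$, which is where the precise normalization ($k!$ rather than $k!/2$) in Assumption \ref{ass:A1}(ii) is used. No idea beyond the Frobenius-norm reduction, the union bound, and Bernstein's inequality is required.
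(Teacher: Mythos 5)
Your proof is correct and follows essentially the same route as the paper: entrywise Bernstein under Cramer's condition, followed by a union bound over the $m^2$ entries, with the elementary $k=2$ moment bound obtained from Assumption~3(i). The only cosmetic difference is the reduction to entries — the paper uses $\normV{[A]_{\um}}\leq m\max_{j,l}|[A]_{j,l}|$ whereas you use $\normV{[A]_{\um}}\leq\normV{[A]_{\um}}_F$ together with a pigeonhole argument — but both yield precisely the same union bound, so the arguments are equivalent.
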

\begin{proof}
 Our proof starts with the observation that for all $j,l\in\N$ the condition (ii) in Assumption \ref{ass:A1} implies for all $t>0$
\begin{gather*}
P\big(\big|\sum_{i=1}^n\{e_j(Z_i)f_l(W_i)- \Ex [e_j(Z)f_l(W)]\}\big| \geqslant t \big)\leqslant 2 \exp\bigg( \frac{-t^2}{4 n \eta^2 + 2\eta t }\bigg),
\end{gather*} which
is just Bernstein's inequality (cf. \cite{Bosq1998}). This implies for all $0<t\leq 2\eta n$ 
\begin{gather}\label{app:gen:upper:l4:e1:1}
\sup_{j,l\in\N}P\big(\big|\sum_{i=1}^n\{e_j(Z_i)f_l(W_i)- \Ex [e_j(Z)f_l(W)]\}\big| \geqslant t \big)\leqslant 2 \exp\big(-\frac{t^2}{8\eta^2 n}\big).
\end{gather}
It is well-known that $m^{-1}\normV{[A]_{\um}}\leqslant \max_{1\leqslant j,l\leqslant m }|[A]_{j,l}|$ for any $m\times m$ matrix $[A]_{\um}$.
Combining the last estimate and \eqref{app:gen:upper:l4:e1:1} we obtain for all $0<t\leq 2\eta\, n^{1/2}$
\begin{multline*}
P\big(m^{-1}n^{1/2}\normV{\Xi_m}\geqslant t\big)\leqslant \sum_{j,l=1}^m P\Big(\big|\sum_{i=1}^n\big(e_j(Z_i)f_l(W_i)- \Ex [e_j(Z)f_l(W)]\big)\big| \geqslant n^{1/2} t \Big)\\ \leqslant 2 \exp\big(-\frac{t^2}{8\eta^2} + 2 \log m\big).
\end{multline*}
\end{proof}

\begin{lem}\label{pr:minimax:l2} Under the conditions of Theorem \ref{res:upper} we have for all $n\geq 1$ 
\begin{gather}
(\mstarn)^{12} P(\mho_{\mstarn}^c) \leq C(\sw,\opw,\eta,D) \label{pr:minimax:l2:eq1}\\
(\drepn)^{-1}P(\Omega_{\mstarn}^c)\leq C(\sw,\opw,\eta,\rep,D)\label{pr:minimax:l2:eq2}.
\end{gather}
\end{lem}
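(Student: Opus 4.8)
The plan is to derive both estimates from the large-deviation bound of Lemma~\ref{pr:minimax:l1} evaluated at the dimension $\mstarn$, after controlling $\normV{[\op]_{\underline{\mstarn}}^{-1}}$ through the extended link condition and invoking Assumption~\ref{ass:reg:II} together with condition~\eqref{minimax upper c1} to show that the exponential tail beats the relevant prefactor. Since $\op\in\OpwdD$ and $\opw$ is nonincreasing, $\normV{[\op]_{\underline{\mstarn}}^{-1}}^2\le D\,\opw_{\mstarn}^{-1}$, so on $\mho_{\mstarn}^c$ one has $\normV{\Xi_{\mstarn}}^2>\bigl(4\mstarn\normV{[\op]_{\underline{\mstarn}}^{-1}}^2\bigr)^{-1}\ge\opw_{\mstarn}/(4D\mstarn)$; equivalently $\mho_{\mstarn}^c\subset\{(\mstarn)^{-2}n\normV{\Xi_{\mstarn}}^2>t_n\}$ with $t_n:=n\opw_{\mstarn}/(4D(\mstarn)^3)$. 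Because $\opw_{\mstarn}\le\opw_1=1\le16\eta^2D(\mstarn)^3$ we have $t_n\le4\eta^2n$, so Lemma~\ref{pr:minimax:l1} yields
\begin{equation*}
 P(\mho_{\mstarn}^c)\le2\exp\Bigl(-\frac{n\opw_{\mstarn}}{32\eta^2D(\mstarn)^3}+2\log\mstarn\Bigr).
\end{equation*}
I will also use that, by~\eqref{eta:known:operator} and $\astarn\ge n^{-1}$, $n\opw_{\mstarn}\ge\kappa\,\sw_{\mstarn}$, that by~\eqref{minimax upper c1} both $(\mstarn)^3\log\mstarn=o(\sw_{\mstarn})$ and $(\mstarn)^3|\log\drepn|=o(\sw_{\mstarn})$, and that $\mstarn\to\infty$.

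For~\eqref{pr:minimax:l2:eq1} I would multiply the last display by $(\mstarn)^{12}$ and insert $n\opw_{\mstarn}\ge\kappa\sw_{\mstarn}$, reaching an upper bound of the form $2\exp\bigl(-\kappa\sw_{\mstarn}/(32\eta^2D(\mstarn)^3)+14\log\mstarn\bigr)$; since $14\log\mstarn=o\bigl(\sw_{\mstarn}/(\mstarn)^3\bigr)$ the exponent tends to $-\infty$, so the sequence $(\mstarn)^{12}P(\mho_{\mstarn}^c)$ is null and hence bounded by a constant depending only on $\sw,\opw,\eta,D$. For~\eqref{pr:minimax:l2:eq2} I would first observe that as soon as $n\opw_{\mstarn}\ge4D$ --- which holds for all $n$ beyond some threshold $n_0$ because $n\opw_{\mstarn}\to\infty$ --- one has $4\normV{[\op]_{\underline{\mstarn}}^{-1}}^2\le4D\opw_{\mstarn}^{-1}\le n$, hence $\mho_{\mstarn}\subset\Omega_{\mstarn}$, i.e.\ $\Omega_{\mstarn}^c\subset\mho_{\mstarn}^c$. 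Using the elementary inequality $(\drepn)^{-1}\le\exp(|\log\drepn|)$, for $n\ge n_0$ this gives
\begin{equation*}
 (\drepn)^{-1}P(\Omega_{\mstarn}^c)\le2\exp\Bigl(|\log\drepn|+2\log\mstarn-\frac{n\opw_{\mstarn}}{32\eta^2D(\mstarn)^3}\Bigr),
\end{equation*}
and since $(\mstarn)^3\bigl(|\log\drepn|+2\log\mstarn\bigr)\le3(\mstarn)^3\max\{|\log\drepn|,\log\mstarn\}=o(\sw_{\mstarn})=o(n\opw_{\mstarn})$, the exponent again tends to $-\infty$. For the finitely many $n<n_0$ I would just use $P(\Omega_{\mstarn}^c)\le1$ and $(\drepn)^{-1}\le n<n_0$ (recall $\drepn\ge n^{-1}$). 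Combining the two ranges gives the asserted uniform bound, the dependence on the representer $\rep$ entering only through $\drepn$.

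The only mildly delicate point is~\eqref{pr:minimax:l2:eq2}: the cruder bound $(\drepn)^{-1}\le n$ is too weak in the infinitely smoothing and the exponentially smooth cases, where $n\opw_{\mstarn}/(\mstarn)^3$ may grow only like a power of $\log n$, so it is essential to exploit the $|\log\drepn|$ term carried by condition~\eqref{minimax upper c1}, i.e.\ that $|\log\drepn|$ is negligible against $n\opw_{\mstarn}/(\mstarn)^3$. Everything else is a routine combination of Lemma~\ref{pr:minimax:l1}, the link condition $\op\in\OpwdD$, and Assumptions~\ref{ass:reg} and~\ref{ass:reg:II}.
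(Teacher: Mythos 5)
Your proposal is correct and takes essentially the same route as the paper: both invoke Lemma~\ref{pr:minimax:l1} with $t_n=n\opw_{\mstarn}/(4D(\mstarn)^3)$ (after checking $t_n\le4\eta^2n$), use the link condition $\op\in\OpwdD$ to control $\normV{[\op]_{\underline{\mstarn}}^{-1}}$, combine Assumption~\ref{ass:reg:II} (via $n\opw_{\mstarn}\ge\kappa\,\sw_{\mstarn}$) with condition~\eqref{minimax upper c1} to make the exponential tail swallow the prefactor, observe $\mho_{\mstarn}\subset\Omega_{\mstarn}$ once $n\ge4D\opw_{\mstarn}^{-1}$, and handle the finitely many small $n$ by a trivial bound. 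The only cosmetic difference is that for $n<n_0$ you use $(\drepn)^{-1}\le n$ whereas the paper uses $(\drepn)^{-1}\le(\drepno)^{-1}$; both give a constant depending only on the allowed quantities.
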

 \begin{proof}[\textcolor{darkred}{\sc Proof.}]
Proof of \eqref{pr:minimax:l2:eq1}.  Since $\normV{[\op]^{-1}_{\um}}^2\leq \opD\opw_m^{-1}$ due to $\op\in\opwdD$ it follows from Lemma   \ref{pr:minimax:l1} for all $m,n\geq 1$ that 
\begin{equation*}
 P(\mho_{m}^c)\leq
P\Big(m^{-2}n\normV{\Xi_m}^2>\frac{n\opw_m}{4\opD m^{3}}\Big)\leq 2\exp\Big(-  \frac{n\opw_m}{32 \opD  \eta^2 m^3} +2\log m\Big)
\end{equation*}
since $(4\opD m^{3}\opw_m^{-1})^{-1}\leq 1 \leq 4\eta^2$ for all  $m\geq 1$. Due to condition \eqref{minimax upper c1}  there exists $n_0\geq 1$
  such that     $n\opw_{\mstarn}\geq 448\opD\eta^2(\mstarn)^3 \log\mstarn$
  for all $n\geq n_0$. Consequently, $  (\mstarn)^{12} P(\mho_{\mstarn}^c) \leq 2$ for all
$n\geq n_0$, while trivially $ (\mstarn)^{12} P(\mho_{\mstarn}^c) \leq (m^*_{n_0})^{12}$ for all $n\leq n_0$, which gives \eqref{pr:minimax:l2:eq1} since $n_0$ and $m^*_{n_0}$ depend on $\sw$, $\opw$, $\eta$ and $D$ only.

Consider \eqref{pr:minimax:l2:eq2}. Let $n_0\in\N$ such that $\max\{|\log \drepn |, (\log \mstarn)\} (\mstarn)^3 \leq n\opw_{\mstarn}(96\opD\eta^2)^{-1}$ for all $n\geq n_0$. Observe that $\mho_{m} \subset\Omega_m$  if $n \geqslant 4\opD \opw_m^{-1}$.
Since $(\mstarn)^{-3}n \opw_{\mstarn}\geq 96\opD\eta^2$ for all $n\geq n_0$  it follows $n\opw_{\mstarn}\geq 4\opD$ for all
$n\geq n_0$ and hence $( \drepn)^{-1}P(\Omega_{\mstarn}^c)\leq ( \drepn)^{-1} P(\mho_{\mstarn}^c) \leq2$ for all $n\geq n_0$ as in the proof of \eqref{pr:minimax:l2:eq1}.
Combining the last estimate and the elementary inequality $( \drepn)^{-1}P(\Omega_{\mstarn}^c)\leq (\drepno)^{-1}$ for all  $n\leq n_0$  shows  \eqref{pr:minimax:l2:eq2} since $n_0$ depends on $\sw$, $\opw$, $\eta$, $\rep$ and $D$ only, which completes the proof.\end{proof}

\subsection{Proofs of Section \ref{sec:sob}}\label{app:proofs:sob}
 
\begin{proof}[\textcolor{darkred}{\sc Proof of Proposition \ref{res:lower:sob}.}]

Proof of (pp). From the definition of $\mstarn$ in \eqref{m:known:operator} it follows  $\mstarn\sim n^{1/(2p+2a)}$.
 Consider case (i).
The condition $s-a<1/2$ implies $n^{-1}\sum_{j=1}^{\mstarn} |j|^{2a-2s}\sim n^{-1}(\mstarn)^{2a-2s+1}\sim n^{-(2p+2s-1)/(2p+2a)}$ and moreover,  $\sum_{j>\mstarn}|j|^{-2p-2s}\sim n^{-(2p+2s-1)/(2p+2a)}$ since $p+s>1/2$. If  $s-a=1/2$ then
$n^{-1}\sum_{j=1}^{\mstarn} |j|^{2a-2s}\sim n^{-1}\log( n^{1/(2p+2a)})$ and
$\sum_{j>\mstarn}|j|^{-2p-2s}\sim n^{-1}$. In the case of $s-a>1/2$ it follows that  $\sum_{j=1}^{\mstarn} |j|^{2a-2s}$ is bounded whereas  $\sum_{j>\mstarn}|j|^{-2p-2s}\lesssim n^{-1}$ and hence, $\drepn\sim n^{-1}$. To prove (ii) we make use of Corollary \ref{res:coro:lower}.  We observe that if $s-a\geq 0$ the sequence $\rw\Opw$ is bounded from below, and hence $\drwn\sim n^{-1}$. Otherwise, the condition $s-a<0$ implies $\drwn\sim n^{-(p+s)/(p+a)}$.

Proof of (pe). Note that $\mstarn$ satisfies $\mstarn \sim \log(n(\log n)^{-p/a})^{1/(2a)}$. In order to prove (i), we calculate that $\sum_{j>\mstarn}|j|^{-2p-2s}\sim(\log n)^{(-2p-2s+1)/(2a)}$ and  $n^{-1}\sum_{j=1}^{\mstarn}\exp(|j|^{2a}) |j|^{-2s}\lesssim(\log n)^{(-2p-2s+1)/(2a)}$.
In case (ii) we immediately obtain  $\drwn\sim (\log n)^{-(p+s)/a}$.

Proof of (ep). It holds true $\mstarn\sim\log(n(\log n)^{-a/p})^{1/(2p)}$.  Consider case (i).
If $s-a<1/2$ then $n^{-1}\sum_{j=1}^{\mstarn} |j|^{2a-2s}\sim  n^{-1}(\log n)^{(2a-2s+1)/(2p)}$.
If $s-a=1/2$ we conclude $n^{-1}\sum_{j=1}^{\mstarn} |j|^{2a-2s}\sim  n^{-1}\log(\log(n))$. On the other hand, the condition $s-a>1/2$ implies  that $\sum_{j=1}^{\mstarn} |j|^{2a-2s}$ is bounded and thus, we obtain the parametric rate $n^{-1}$.
Moreover, it is easily seen that $\sum_{j>\mstarn}|j|^{-2s}\exp(-|j|^{2p})\lesssim n^{-1}\sum_{j=1}^{\mstarn}|j|^{2a-2s}$.
In case (ii) if $s-a\geq 0$ then the sequence $\rw\Opw$ is bounded from below as mentioned above and thus, $\drwn\sim n^{-1}$. If $s-a< 0$ then $\drwn\sim  n^{-1}(\log n)^{(a-s)/p}$, which completes the proof.
\end{proof}
\subsection{Proofs of Section \ref{sec:adaptive}}\label{app:proofs:adaptive}
 At the end of this section we shall prove six technical Lemmata (\ref{app:part:l2} -- \ref{adapt:full:lem:1}) which are used in the following proofs. Let us introduce a nondecreasing sequence $\Delta:=(\Delta_m)_{m\geq 1}$  and its empirical analogon $\hDelta:=(\hDelta_m)_{m\geq 1}$ by $\Delta_{m}:=\max_{1\leq m'\leq m}\normV{[\rep]_{\um'}^t[\op]_{\um'}^{-1}}^2$ and  $\hDelta_{m}:=\max_{1\leq m'\leq m}\normV{[\rep]_{\um'}^t[\hop]_{\um'}^{-1}}^2$, respectively. Similarly to $\Mo$ introduced in \eqref{part:def:bounds:o} we define
\begin{equation}\label{part:def:bounds:u}
 \Mu:=\min\set{2\leq m\leq \Mh:\, 4D\upsilon_m^{-1}m^3 \max\limits_{1\leq j\leq m}[h]_j^2> a_n}-1
\end{equation}
where we set  $\Mu:=\Mh$ if the set is empty. Thus, $\Mu$ takes values between $1$ and $ \Mh$.
In the following $\mathcal C>0$ denotes a constant only depending on the classes $\Fswsr$, $\cTdDw$, the constants $\sigma$, $\eta$ and the representer $h$. For ease of notation, the value of $\mathcal C>0$ may change from line to line.

\begin{proof}[\textcolor{darkred}{\sc Proof of Theorem \ref{partially adaptive unknown operator}}]
The proof of the theorem is based on inequality \eqref{adap:main:ineq}. 
Observe that by Lemma \ref{app:upperboundmodel:l1} we have  $\Mu\leq M_n\leq \Mo$.
Due to condition $(\md)^3\max_{1\leq j\leq\md}[h]_j^2=o( a_n\opw_{\md})$ as $n\to\infty$ there exists $n_0 \geq 1$ only depending on $h$, $\sw$, and $\opw$ such that for all $n\geq n_0$ it holds
 $\md\leq\Mu$. We distinguish in the following the cases $n\geq n_0$ and $n<n_0$. First, consider $n\geq n_0$.
Applying Corollary \ref{app:part:c1}  together with estimate \eqref{adap:main:ineq} implies
\begin{equation*}
  \Ex \absV{\hell_{\tm}-\ell_\rep(\sol)}^2\leq \mathcal C\Big\{ \pen_{\md} +\bias_{\md} +   n^{-1}\Big\}.
\end{equation*}
From the definition of $\pen_m$ we infer $\pen_m\leq 24(3\sr+2\sigma^2)(1+\log n)n^{-1}\opD\sum_{j=1}^m\fou{\rep}_j^2\opw_j^{-1}$ since $\op\in\opwdD$, $U\in\cU_\sigma^\infty$, and $\sol\in\Fswsr$ . Moreover, since
$\sol\in\Fswsr$   and $\rep\in\cF_{1/\sw}$ estimate \eqref{app:gen:upper:l3:e2}   in Lemma \ref{app:gen:upper:l3} implies for all $1\leq m\leq\Mu$ that 
$\bias_m\leq \min_{1\leq m'\leq\Mu}2\,\sr\,\big\{\sum_{j>m'}[\rep]_j^2\sw_j^{-1}+
\opd\opD\opw_{m'}\sw_{m'}^{-1}\sum_{j=1}^{m'}[\rep]_j^2\opw_j^{-1}\big\}$.

Consequently,
\begin{equation*} 
 \Ex \absV{\hell_{\tm}-\ell_\rep(\sol)}^2\leq \mathcal C \Big\{ \max\Big(\sum_{j>\md}[\rep]_j^2\sw_j^{-1}, \ad\sum_{j=1}^\md\fou{\rep}_j^2\opw_j^{-1}\Big)+n^{-1}\Big\}.
\end{equation*}
Consider now $n<n_0$. Observe that for all $1\leq m\leq \Mh$ it holds
\begin{multline}\label{app:full:eq:6}
 |\hell_{m}-\ell_\rep(\sol)|^2\leq 2|[h]_{\um}^t[\hop]_{\um}^{-1}V_m|^2\1_{\Cset_m}+2 (|\ell_{\rep}(\sol_{m}-\sol)|^2+|\ell_\rep(\sol)|^2\1_{\Cset_m^c})\\
 \leq 2n\|[h]_{\uMh}\|^2\|V_{\Mh}\|^2+ 2 (|\ell_{\rep}(\sol_{m}-\sol)|^2+|\ell_\rep(\sol)|^2\1_{\Cset_m^c}).
\end{multline}

From the definition of $\Mh$ we infer $\|[h]_{\uMh}\|^2\leq [\rep]_1^2\,n^{5/4}$. Hence  inequality \eqref{app:gen:upper:l2:e2:1} in Lemma \ref{app:gen:upper:l2}, inequality \eqref{app:gen:upper:l3:e1} in Lemma \ref{app:gen:upper:l3} and Lemma \ref{adapt:full:lem:1} yield for all $\sol\in\Fswsr$   and $\rep\in\cF_{1/\sw}$
\begin{equation*}
 n\Ex|\hell_{\tm}-\ell_\rep(\sol)|^2
 \leq 2\,[\rep]_1^2\,n^{9/5} \|V_{\Mh}\|^2+ 6\sr\|\rep\|_{1/\sw}^2(1+Dd)n\leq  \mathcal C,
\end{equation*}
which proves the result.
\end{proof}

\begin{lem}\label{app:part:l1}
Consider $(\tpen_m)_{m\geq 1 }$ with $\tpen_m:=24\big(24\Ex[U^2]+96\eta^2\sr\, m^3\sw_m^{-1}\big)(1+\log n)n^{-1}$. Then under the conditions of Theorem \ref{partially adaptive unknown operator} we have  for all $n\geq 1$
\begin{equation*}\label{app:part:l1:eq:1}
\sup_{\op\in\cTdDw}\sup_{P_{U|W}\in\cU_\sigma^\infty}\Ex\max_{\md\leq m\leq\Mo} \vect{\absV{\hell_{m}-\ell_\rep(\sol_{m})}^2 -\frac{1}{6}\tpen_m }_+\leq \mathcal C\,n^{-1}.
  \end{equation*}
\end{lem}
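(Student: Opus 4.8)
The plan is to bound the expected positive part of $|\hell_m-\ell_\rep(\sol_m)|^2-\tfrac16\tpen_m$ term by term, exploiting that only $\mathcal O(n^{1/4})$ values of $m$ are admissible: a union bound over $m$ of Bernstein-type tail estimates whose exponents carry the factor $(1+\log n)$ built into $\tpen_m$ will then leave a residual of order $n^{-1}$. First I would dispose of the finitely many small $n$ (for which the window $\md\le m\le\Mo$ may be empty) by the crude argument used at the end of the proof of Theorem \ref{partially adaptive unknown operator}: on $\Omega_m=\{\normV{[\hop]_\um^{-1}}\le\sqrt n\}$ one has $\normV{[\hop]_\um^{-1}}^2\le n$ and $\normV{[\rep]_\um}^2\le[\rep]_1^2 n^{5/4}$ for $m\le\Mh$, while $\hell_m=0$ off $\Omega_m$, so Lemma \ref{app:gen:upper:l2} and Lemma \ref{app:gen:upper:l3} give a bound of order $1$. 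For $n$ large, the hypothesis $(\md)^3\max_{1\le j\le\md}[h]_j^2=o(a_n\opw_{\md})$ together with the definition \eqref{part:def:bounds:o} of $\Mo$ guarantees $\md\le\Mo$, so the window is nonempty.

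For $n$ large I would split, for each $m$ in the window, $|\hell_m-\ell_\rep(\sol_m)|^2=|\hell_m-\ell_\rep(\sol_m)|^2\1_{\Omega_m}+|\ell_\rep(\sol_m)|^2\1_{\Omega_m^c}$. On $\Omega_m$ one has $\hell_m-\ell_\rep(\sol_m)=[\rep]_\um^t[\hop]_\um^{-1}V_m$, and from $[\hop]_\um-[\op]_\um=\Xi_m$ the Neumann identity $[\hop]_\um^{-1}=[\op]_\um^{-1}-[\op]_\um^{-1}\Xi_m[\hop]_\um^{-1}$; on $\mho_m=\{\sqrt m\normV{\Xi_m}\normV{[\op]_\um^{-1}}\le\tfrac12\}$ this gives $\normV{[\hop]_\um^{-1}}\le2\normV{[\op]_\um^{-1}}$ and $\normV{\Xi_m}\normV{[\hop]_\um^{-1}}\le m^{-1/2}$. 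Writing $b_m:=([\op]_\um^{-1})^t[\rep]_\um$, and noting that for $m\le\Mo$ one has $\mho_m\subset\Omega_m$ when $n$ is large (because $\op\in\cTdDw$ gives $\normV{[\op]_\um^{-1}}^2\le D\opw_m^{-1}$ and $\Mo$ forces $m^3\opw_m^{-1}\le 4Da_n/[h]_1^2\le 4Dn/[h]_1^2$), the claim reduces to four bounds, each of order $n^{-1}$ after taking $\max_{\md\le m\le\Mo}$ and expectation: (a) $\sum_m\Ex(2|b_m^tV_m|^2-\tfrac1{12}\tpen_m)_+$; (b) $\sum_m\Ex\big(\tfrac2m\normV{b_m}^2\normV{V_m}^2-\tfrac1{12}\tpen_m\big)_+$, which dominates the bilinear contribution on $\mho_m$; (c) $\sum_m\Ex\big[|\hell_m-\ell_\rep(\sol_m)|^2\1_{\Omega_m\setminus\mho_m}\big]$; (d) $\Ex\max_m|\ell_\rep(\sol_m)|^2\1_{\Omega_m^c}$.

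The crux is (a) and (b). For (a) one has $b_m^tV_m=n^{-1}\sum_{i=1}^n\zeta_{m,i}$ with $\zeta_{m,i}=(U_i+(\sol-\sol_m)(Z_i))\,b_m^t[f(W_i)]_\um$ i.i.d., centred (since $[T(\sol-\sol_m)]_\um=0$), of variance $\le 2\normV{b_m}^2(\sigma^2+C(\sw)\eta^2\normV{\sol-\sol_m}_\sw^2)$ by the computation behind Lemma \ref{app:gen:upper:l2}, and — by the Cramér conditions of Assumptions \ref{ass:A1}(ii) and \ref{ass:A2} for the products $e_j(Z)f_l(W)$, $f_j(W)f_l(W)$ together with $P_{U|W}\in\cU_\sigma^\infty$ for $U$ — satisfying a Bernstein moment bound whose scale is controlled (after squaring) by the quantities $\Ex[U^2]$ and $\eta^2\sr m^3\sw_m^{-1}$ entering $\tpen_m$. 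Bernstein's inequality gives $P(|b_m^tV_m|>t)\le2\exp\!\big(-nt^2/2(\mathrm{var}+\mathrm{scale}\cdot t)\big)$, and $\tpen_m$ is calibrated — this is exactly the role of its large numerical constants — so that the exponent evaluated at the threshold $t=\sqrt{\tpen_m/24}$ is $\ge c_0(1+\log n)$ with $c_0>\tfrac54$, in both the Gaussian and the exponential regime of the bound; integrating, $\Ex(2|b_m^tV_m|^2-\tfrac1{12}\tpen_m)_+=4\int_{\sqrt{\tpen_m/24}}^\infty t\,P(|b_m^tV_m|>t)\,dt\le\mathcal C\,n^{-5/4}$, and summing over the $\le\lfloor n^{1/4}\rfloor$ admissible $m$ yields $\mathcal C n^{-1}$. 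Bound (b) is analogous: $\normV{V_m}^2=\sum_{l\le m}|V_{m,l}|^2$ with each $V_{m,l}$ an empirical mean to which the same Bernstein inequality applies, so $P(\normV{V_m}^2>u)\le 2m\exp(-c_0'(1+\log n))$ once $u$ is a suitable multiple of $(1+\log n)\,m\,n^{-1}(\sigma^2+\eta^2\sr)$; since in the window $\tfrac1{12}\tpen_m\ge\tfrac2m\normV{b_m}^2u$ (here one uses that $\tpen_m$ is, up to the large constants, of order $(1+\log n)n^{-1}$ times $\max_{m'\le m}\normV{[\rep]_{\um'}^t[\op]_{\um'}^{-1}}^2$, and that $m^3\sw_m^{-1}=o(1)$), a Cauchy–Schwarz on $\{\tfrac2m\normV{b_m}^2\normV{V_m}^2>\tfrac1{12}\tpen_m\}$ against $\Ex\normV{V_m}^4\le C(\sw)(n^{-1}m\eta^2(\sigma^2+\sr))^2$ and $\normV{b_m}^2\le 4D^2a_n$ delivers $\mathcal C n^{-1}$ after summation.

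For (c) and (d) I would use that $P(\mho_m^c)$ is super-polynomially small on the window. By Lemma \ref{pr:minimax:l1} with $\normV{[\op]_\um^{-1}}^2\le D\opw_m^{-1}$ and $m^3\opw_m^{-1}\le 4Da_n/[h]_1^2$, one gets $P(\mho_m^c)\le2\exp(-c_1\,n/a_n+2\log m)$, and $n/a_n=n^{1/\log(2+\log n)}(1+\log n)$ tends to infinity faster than any power of $\log n$; in particular $\Omega_m^c\subset\mho_m^c$ for $n$ large. For (d), Lemma \ref{app:gen:upper:l3} gives $|\ell_\rep(\sol_m)|^2\le\normV{\rep}_{1/\sw}^2\normV{\sol_m}_\sw^2\le 2(1+Dd)\sr\normV{\rep}_{1/\sw}^2$ uniformly in $m$, so (d) $\le\mathcal C\,P\big(\bigcup_{m\le\Mo}\mho_m^c\big)\le\mathcal C n^{-1}$; for (c), $|\hell_m-\ell_\rep(\sol_m)|^2\1_{\Omega_m}\le n\normV{[\rep]_\um}^2\normV{V_m}^2$ combined with $\normV{[\rep]_\um}^2\le[\rep]_1^2 n^{5/4}$, $\Ex\normV{V_m}^4$ from Lemma \ref{app:gen:upper:l2}, and Cauchy–Schwarz yields a bound $\mathcal O\big(n^{7/4}P(\mho_m^c)^{1/2}\big)$ per $m$, absorbed by the decay of $P(\mho_m^c)$. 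The genuine obstacle throughout is the calibration in (a)–(b): one must check that $\tpen_m$ dominates the Bernstein variance-and-scale proxy of $b_m^tV_m$ (and of $\tfrac2m\normV{b_m}^2\normV{V_m}^2$) \emph{uniformly} over $\op\in\cTdDw$, $P_{U|W}\in\cU_\sigma^\infty$, $\sol\in\cF_\sw^\sr$ and \emph{throughout} $\md\le m\le\Mo$ — which is where $\normV{\sol-\sol_m}_\sw^2\le 2(1+Dd)\sr$ (Lemma \ref{app:gen:upper:l3}), the control of $\normV{[\op]_\um^{-1}}^2m^3$ by $a_n$, and $m^3\sw_m^{-1}=o(1)$ from Assumption \ref{ass:reg} enter — and that its numerical constants are large enough for the per-$m$ deviation bounds to beat the $\lfloor n^{1/4}\rfloor$-fold union bound with $n^{-1}$ to spare.
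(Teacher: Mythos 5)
Your four-term decomposition (a)--(d) mirrors the paper's $I_m,II_m,III_m,IV_m$, and your treatment of (c), (d) via the super-polynomially small $P(\mho_m^c)$ and the $\|\Xi_m\|^8$/$\|V_m\|^4$ moment bounds is exactly the paper's route. The genuine gap is in (a)--(b), and it is precisely the claim you mark as the ``crux'': you assert that $\zeta_{m,i}=(U_i+(\sol-\sol_m)(Z_i))\,\xi_s(W_i)$ satisfies an unconditional Bernstein moment bound. For the $(\sol-\sol_m)\xi_s$ piece this is true (and is the content of Lemma~\ref{app:part:l3}, via the Cram\'er bound for $e_l(Z)f_j(W)-[T]_{j,l}$). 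For the $U\,\xi_s(W)$ piece it is \emph{false}: the model assumes $\Ex[|U|^k|W]\leq\sigma^k k!$ and, via Assumption~\ref{ass:A2}, only Cram\'er control of products $f_jf_l$. Combining these gives, e.g., $\Ex|Uf_l(W)|^{2k}\leq\sigma^{2k}(2k)!\,\Ex|f_l(W)|^{2k}\lesssim C^{k}(k!)^2$, i.e.\ $(k!)^2$ growth rather than $k!$; so $U\xi_s(W)$ does not satisfy Cram\'er's condition, and a direct application of Bernstein's inequality to the sum $\sum_i U_i\xi_s(W_i)$ is not available.

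This is exactly the obstacle that Lemma~\ref{app:part:l2} in the paper is built to overcome: the bound is obtained by first applying Bernstein \emph{conditionally on} $W_1,\dots,W_n$ (using only $\Ex[|U|^k|W]\leq\sigma^k k!$), obtaining a tail exponent whose variance proxy is the random quantity $\frac1n\sum_i|\xi_s(W_i)|^2$ and whose scale proxy is $\max_i|\xi_s(W_i)|$; and then integrating out the $W$'s by splitting on the event $\cB$ where the empirical Gram matrix of $(f_j)_{j\leq m}$ is close to the identity (its complement has super-polynomially small probability by Assumption~\ref{ass:A2} and Bernstein for $f_jf_l$), and handling the random scale $\max_i|\xi_s(W_i)|$ through $\Ex|\xi_s(W)|^4\leq m^2\eta^4$ and concavity. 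Your sketch names the right assumptions but treats $U\xi_s(W)$ as if it were a Cram\'er variable; without the conditioning/unconditioning step the per-$m$ bound of order $n^{-5/4}$ (or, in the paper's normalization, $\sup_{s\in\mathbb S^m}n\Ex(|s^tV_m|^2-\tpen_m/(24\Delta_m))_+\leq Cn^{-1}$) does not follow. A secondary, cosmetic difference: the paper does not take a union bound over the $\lesssim n^{1/4}$ models but instead pulls out $\sup_{s\in\mathbb S^m}\Ex(|s^tV_m|^2-\cdot)_+$ and uses $n^{-1}\sum_{m\leq\Mo}\Delta_m\leq 4D^2$ (a consequence of the definition \eqref{part:def:bounds:o} of $\Mo$ and $T\in\cTdDw$); this avoids wasting the $n^{1/4}$ factor and is why the penalty $\pen_m$ carries the factor $\Delta_m$ whereas $\tpen_m$ does not.
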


\begin{proof}[\noindent\textcolor{darkred}{\sc Proof.}] 

 Similarly to the proof of Theorem \ref{res:upper} we obtain the decomposition
  \begin{multline*}
  |\hell_{m}-\ell_{\rep}(\sol_m)|^2\leq 2|[\rep]_{\um}^t[\op]_{\um}^{-1}V_m|^2
  +2m^{-1}\|[\rep]_{\um}^t[\op]_{\um}^{-1}\|^2\|V_m\|^2+\\
  \hfill 2n\|[\rep]_{\um}^t[\op]_{\um}^{-1}\Xi_m\|^2\|V_m\|^2
  \1_{\mho_m^c}+|\ell_\rep(\sol_m)|^2\1_{\Omega_m^c}.
 \end{multline*}

 Observe that 
 $\|[\rep]_{\um}^t[\op]_{\um}^{-1}\|^2\leq \Delta_m$ for all $m\geq 1$ and hence,  we have for all $\md\leq m\leq\Mo$ 
 \begin{multline*}
 \vect{\absV{\hell_{m}-\ell_\rep(\sol_{m})}^2 - \frac{1}{6}\tpen_m }_+
  \leq 2\Delta_m \Big(\frac{|[\rep]_{\um}^t[\op]_{\um}^{-1}V_m|^2}{\|[\rep]_{\um}^t[\op]_{\um}^{-1}\|^2}
    -\frac{\tpen_m}{24\Delta_m}\Big)_+\\\hfill
    +2\Delta_m\Big(\frac{\|V_m\|^2}{m}
    -\frac{\tpen_m}{24\Delta_m}\Big)_+
    +2n \Delta_m\|\Xi_m\|^2\|V_m\|^2\1_{\mho_m^c}
    +|\ell_\rep(\sol_m)|^2\1_{\Omega_m^c}\\
    =:I_m+II_m+III_m+IV_m.
 \end{multline*}
Consider the first two right hand side terms. We calculate
\begin{equation*}
 \Ex \max_{\md\leq m\leq\Mo} \vect{I_m+II_m}\leq 4\max_{\md\leq m\leq\Mo}\sup_{s\in\mathbb S^m}\Ex\Big(|s^tV_m|^2-\frac{\tpen_m}{24\Delta_m}\Big)_+\sum_{m=1}^{\Mo}\Delta_m.\label{app:part:l1:eq1}
 \end{equation*}
From the definition of $\tpen$ we infer for all $s\in\mathbb S^m$ and $\md\leq m\leq \Mo$ 
\begin{multline*}
 n\Ex\Big(|s^tV_m|^2-\frac{\tpen_m}{24\Delta_m}\Big)_+\leq 2 \Ex\Big((n^{-1/2}\sum_{i=1}^nU_is^t[f(W_i)]_{\um})^2-12\Ex[U^2](1+\log n)\Big)_+\\\hfill
    +2\Ex\Big((n^{-1/2}\sum_{i=1}^n(\sol(Z_i)-\sol_m(Z_i))s^t[f(W_i)]_{\um})^2-48\eta^2\sr\, m^3 \sw_m^{-1}(1+\log n)\Big)_+\\\hfill
  \leq C(\sigma, \eta, \sw,\sr, D)\, n^{-1}
\end{multline*}
where the last inequality follows from Lemma  \ref{app:part:l2} and \ref{app:part:l3}. Due to the definition of $\Mo$ and since $\Delta$ is nondecreasing we have $n^{-1}\sum_{m=1}^{\Mo}\Delta_m\leq D(n\opw_{\Mo})^{-1}(\Mo)^2 \max_{1\leq j\leq \Mo}[h]_j^2\leq 4D^2$. 
Consequently, $\Ex \max_{\md\leq m\leq\Mo}\vect{I_m+II_m}\leq \mathcal Cn^{-1}$.
Further, we obtain for $\sol\in\Fswsr$   and $\rep\in\cF_{1/\sw}$
 \begin{multline*}
 \Ex \max_{\md\leq m\leq\Mo}\vect{III_m}\leq n\Delta_{\Mo}\,(\Ex\|\Xi_{\Mo}\|^8)^{1/4}(\Ex\|V_{\Mo}\|^4)^{1/2}
    P^{1/4}\Big(\bigcup_{m=1}^{\Mo}\mho_m^c\Big)\\
\leq C(\sw)\,\eta^4(\sigma^2+(1+Dd)\sr)n^{-1}\Delta_{\Mo}(\Mo)^3P^{1/4}\Big(\bigcup_{m=1}^{\Mo}\mho_m^c\Big)
 \end{multline*}
where the last inequality is due to Lemma \ref{app:gen:upper:l2} and 
\begin{equation*}
 \Ex \max_{\md\leq m\leq\Mo} \vect{IV_m}\leq \sr\|\rep\|_{1/\sw}^2P\Big(\bigcup_{m=1}^{\Mo}\Omega_m^c\Big).\label{app:part:l1:eq2}
 \end{equation*}
Now applying $n^{-1}\Delta_{\Mo}(\Mo)^3\leq 4D^2$ and Lemma  \ref{app:part:l6} gives $\Ex \max_{\md\leq m\leq\Mo}\vect{III_m+IV_m}\leq \mathcal Cn^{-1}$, which completes the proof.
 \end{proof}

\begin{coro}\label{app:part:c1}
Under the conditions of Theorem \ref{partially adaptive unknown operator}  we have  for all $n\geq 1$
\begin{equation*}\label{app:part:l1:eq:1}
\sup_{\op\in\cTdDw}\sup_{P_{U|W}\in\cU_\sigma^\infty}\Ex\max_{\md\leq m\leq\Mo} \vect{\absV{\hell_{m}-\ell_\rep(\sol_{m})}^2 -\frac{1}{6}\pen_m }_+\leq \mathcal C \,n^{-1}.
  \end{equation*}
\end{coro}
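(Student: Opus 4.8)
The plan is to deduce the statement from Lemma \ref{app:part:l1}, which is the same bound with the deterministic sequence $\tpen_m$ of that lemma in place of the data-driven penalty $\pen_m$ of Definition \ref{part:def:1}. Since $x\mapsto\vect{x}_+$ is nondecreasing, the cleanest route is to establish
\begin{equation}\label{pr:coro:pen}
 \pen_m\geq\tpen_m\qquad\text{for all }1\leq m\leq\Mo,
\end{equation}
uniformly over $\op\in\cTdDw$, $P_{U|W}\in\cU_\sigma^\infty$ and $\sol\in\Fswsr$, and then pass to $\max_{\md\leq m\leq\Mo}$, to the expectation, and to the two suprema, which reduces everything to Lemma \ref{app:part:l1}. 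Writing out the definitions of $\pen_m$ and $\tpen_m$ and cancelling the common factor $24(1+\log n)n^{-1}$, inequality \eqref{pr:coro:pen} is equivalent to
\begin{equation}\label{pr:coro:sigma}
 \varsigma_m^2\,\Delta_m\geq 24\,\Ex[U^2]+96\,\eta^2\sr\,m^3\sw_m^{-1},\qquad\Delta_m:=\max_{1\leq m'\leq m}\normV{[\rep]_{\um'}^t[\op]_{\um'}^{-1}}^2.
\end{equation}
(Equivalently one may simply rerun the proof of Lemma \ref{app:part:l1} with $\pen_m$ in place of $\tpen_m$: the factor $\Delta_m$ that is pulled out of the decomposition of $\absV{\hell_m-\ell_\rep(\sol_m)}^2$ cancels the $\Delta_m$ inside $\pen_m$, the ``normalised'' threshold becomes $24\,\varsigma_m^2(1+\log n)n^{-1}$, and the maximal inequalities of Lemmas \ref{app:part:l2} and \ref{app:part:l3} apply as soon as $\varsigma_m^2$ dominates the two deterministic thresholds used there, which is again \eqref{pr:coro:sigma}.)

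To obtain \eqref{pr:coro:sigma} I would bound each side crudely. On the right, $m^3\sw_m^{-1}\leq C(\sw):=\sup_{j\geq1}|j|^3\sw_j^{-1}<\infty$ by Assumption \ref{ass:reg}, while $\Ex[U^2]\leq\sigma^2$ because $P_{U|W}\in\cU_\sigma^\infty$ and, more usefully, $\Ex[U^2]=\Ex[(Y-\sol(Z))^2]\leq2\Ex[Y^2]+2\Ex[\sol^2(Z)]$ with $\Ex[\sol^2(Z)]=\normV{\sol}_{L^2_Z}^2\leq\normV{\sol}_\sw^2\leq\sr$. On the left, Definition \ref{part:def:1} gives $\varsigma_m^2\geq74\bigl(\Ex[Y^2]+\max_{1\leq m'\leq m}\normV{\sol_{m'}}_{L^2_Z}^2\bigr)$ --- note that this already yields $\varsigma_m^2\geq37\,\Ex[(Y-\sol_m(Z))^2]$, i.e.\ $\varsigma_m^2$ dominates the \emph{sharp} fluctuation scale of $\hell_m-\ell_\rep(\sol_m)$ --- and $\Delta_m\geq\normV{[\rep]_{\underline{1}}^t[\op]_{\underline{1}}^{-1}}^2\geq\Opd^{-1}\abs{[\rep]_1}^2>0$, using the normalisation $[\rep]_1\neq0$ together with $\normV{[\op]_{\um'}}\leq\sqrt{\Opd}$ on $\cE_{m'}$ (from the link condition \eqref{bm:link}, $\opw$ nonincreasing and $\opw_1=1$). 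Finally, $\normV{E_{m'}\sol-\sol_{m'}}_{L^2_Z}^2\leq\OpD\Opd\,\sw_{m'}^{-1}\normV{\sol}_\sw^2\to0$ by \eqref{app:gen:upper:l3:e1:2}, so $\sol_{m'}\to\sol$ in $L^2_Z$ and $\max_{1\leq m'\leq m}\normV{\sol_{m'}}_{L^2_Z}^2\to\Ex[\sol^2(Z)]$; combining this with $\Ex[Y^2]=\Ex[\sol^2(Z)]+2\Ex[\sol(Z)U]+\Ex[U^2]$ and $\abs{\Ex[\sol(Z)U]}\leq\Ex[\sol^2(Z)]^{1/2}\Ex[U^2]^{1/2}$ shows $\varsigma_m^2\gtrsim\Ex[U^2]+\Ex[\sol^2(Z)]$ once $m$ exceeds the resolution of $\sol$, which with the numerical constants appearing in $\pen_m$, $\tpen_m$ chosen accordingly gives \eqref{pr:coro:sigma} on that range of $m$.

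The step I expect to be the main obstacle is exactly this penalty comparison. Because of the endogeneity, $\Ex[Y^2]$ can be far smaller than $\Ex[U^2]$ (near-perfect cancellation of $\sol(Z)$ against $U$), so \eqref{pr:coro:pen} is not self-evident and may fail for small $m$, where $\max_{m'\leq m}\normV{\sol_{m'}}_{L^2_Z}^2$ has not yet ``seen'' $\sol$. What rescues the argument is that such cancellation forces $\Ex[\sol^2(Z)]$ to be comparable with $\Ex[U^2]$ (since $\Ex[U^2]\leq(\Ex[Y^2]^{1/2}+\Ex[\sol^2(Z)]^{1/2})^2$), hence $\max_{m'\leq m}\normV{\sol_{m'}}_{L^2_Z}^2$, and with it $\varsigma_m^2$, is comparable as well once $m$ is large enough; for the remaining $m$ one should not route through the penalty but instead use $\varsigma_m^2\geq37\,\Ex[(Y-\sol_m(Z))^2]$ directly inside the Bernstein estimates, together with $m^3\sw_m^{-1}\leq C(\sw)$ and $m\leq\Mo\leq n^{1/4}$, to show that the positive parts $\Ex\vect{\absV{\hell_m-\ell_\rep(\sol_m)}^2-\tfrac{1}{6}\pen_m}_+$ still contribute only $\mathcal C n^{-1}$ --- this is the analytical heart of the matter and mirrors the way Lemma \ref{app:part:l1} itself is proved.
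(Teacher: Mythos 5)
Your strategy---deduce the corollary from Lemma~\ref{app:part:l1} via the penalty comparison $\tpen_m\leq\pen_m$---is exactly the paper's route, and you correctly flag the main obstacle: because of endogeneity, $\Ex[Y^2]$ may be small while $\Ex[U^2]$ is not, so $\varsigma_m^2$ need not dominate the corresponding portion of $\tpen_m$ when $m$ is small. Where you and the paper diverge is in resolving this. You aim to establish the comparison for all $1\leq m\leq\Mo$ and then patch the failing $m$ by rerunning the Bernstein estimates against a threshold based on $\varsigma_m^2\geq37\,\Ex[(Y-\sol_m(Z))^2]$. The paper instead uses that the maximum is only over $\md\leq m\leq\Mo$ and that $\md\to\infty$ as $n\to\infty$, and splits on $n$ rather than on $m$: because $m^3\sw_m^{-1}\to0$ (Assumption~\ref{ass:reg}) and $\|\sol-\sol_m\|_Z\to0$ as $m\to\infty$ (a consequence of $T\in\cTdDw$, cf.\ the proof of Corollary~\ref{res:gen:coro:cons}), there is a finite $n_0$, depending only on $\sw$, $\sr$ and $\eta$, such that for $n\geq n_0$ and every $m\geq\md$,
\begin{equation*}
24\Ex[U^2]+96\eta^2\sr\,m^3\sw_m^{-1}\leq 72\big(\Ex[Y^2]+\|\sol_m\|_Z^2+\|\sol-\sol_m\|_Z^2\big)+96\eta^2\sr\,m^3\sw_m^{-1}\leq\varsigma_m^2,
\end{equation*}
the first inequality using $U=Y-\sol_m(Z)-(\sol-\sol_m)(Z)$ and hence $\Ex[U^2]\leq3(\Ex[Y^2]+\|\sol_m\|_Z^2+\|\sol-\sol_m\|_Z^2)$, and the second using $\varsigma_m^2=74\big(\Ex[Y^2]+\max_{m'\leq m}\|\sol_{m'}\|_Z^2\big)$. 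This yields $\tpen_m\leq\pen_m$ on the entire relevant range, so Lemma~\ref{app:part:l1} gives the result for $n\geq n_0$. For the finitely many $n<n_0$ the paper makes no attempt at a refined argument: it simply drops $\pen_m$ from the positive part, bounds $\sum_{m\leq\Mo}\Delta_m\Ex|s^tV_m|^2$ via \eqref{app:gen:upper:l2:e1:1} and $n^{-1}\sum_{m\leq\Mo}\Delta_m\leq4D^2$, and absorbs $n_0$ into $\mathcal C$. Your ``for the remaining $m$'' patch would require reworking the maximal inequalities of Lemmas~\ref{app:part:l2} and~\ref{app:part:l3} with a differently calibrated threshold, and the case split on $m$ rather than $n$ would leave you handling both regimes within every $n$; the paper's crude bound on the bounded set $\{n<n_0\}$ is shorter and safer. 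Also note that the comparison you actually need after pulling out the normalizing factors is $\varsigma_m^2\geq24\Ex[U^2]+96\eta^2\sr\,m^3\sw_m^{-1}$, with $\Delta_m$ cancelling entirely between $\pen_m$ and the thresholds used in Lemma~\ref{app:part:l1}'s reduction.
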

\begin{proof}
Observe that $m^3\sw_m^{-1}=o(1)$ and $\|\sol-\sol_m\|_Z^2=o(1)$ as $m\to\infty$ due to Assumption \ref{ass:reg} and $T\in\cT_{d,D}^\opw$ (cf. proof of Corollary \ref{res:gen:coro:cons}), respectively. Thereby, there exists a constant $n_0$ only depending on $\sw$, $\sr$, and $\eta$ such that for all $n\geq n_0$ and $m\geq \md$ we have
\begin{equation}\label{app:part:c1:eq1}
 24\Ex[U^2]+96\eta^2\sr\, m^3\sw_m^{-1}\leq 72\big(\Ex[Y^2]+\|\sol_m\|_Z^2+\|\sol-\sol_m\|_Z^2\big)+96\eta^2\sr\, m^3\sw_m^{-1}\leq \varsigma_m^2.
\end{equation}
We distinguish in the following the cases $n<n_0$ and $n\geq n_0$. First, consider $n< n_0$.
Due to $n^{-1}\sum_{m=1}^{\Mo}\Delta_m\leq 4D^2$ and inequality \eqref{app:gen:upper:l2:e1:1} in Lemma \ref{app:gen:upper:l2} we calculate for all $s\in\mathbb S^m$
\begin{equation*}
 \sum_{m=1}^{\Mo}\Delta_m\Ex\Big(|s^tV_m|^2-\frac{\pen_m}{24\Delta_m}\Big)_+\leq \sum_{m=1}^{\Mo}\Delta_m \Ex|s^tV_m|^2\leq 8n_0D^2 \big( \sigma^2+C(\sw)\,\eta^2\, \normV{ \sol -   \sol_m}_\bw^2\big )\,n^{-1}.
\end{equation*}
Therefore, following line by line  the proof of Lemma \ref{app:part:l1} it is easily seen that it holds $n\Ex\max_{\md\leq m\leq\Mo} \big(\absV{\hell_{m}-\ell_\rep(\sol_{m})}^2 -\frac{1}{6}\pen_m \big)_+\leq \mathcal C$. Consider now $n\geq n_0$.
Inequality \eqref{app:part:c1:eq1} implies $\tpen_m\leq \pen_m$ and thus, $\big(\absV{\hell_{m}-\ell_\rep(\sol_{m})}^2 - \frac{1}{6}\pen_m \big)_+\leq \big(\absV{\hell_{m}-\ell_\rep(\sol_{m})}^2 - \frac{1}{6}\tpen_m \big)_+$ for all $\md\leq m\leq \Mo$. Thus, from Lemma \ref{app:part:l1} we infer $n\Ex\max_{\md\leq m\leq\Mo} \big(\absV{\hell_{m}-\ell_\rep(\sol_{m})}^2 -\frac{1}{6}\pen_m \big)_+\leq \mathcal C$, which completes the proof of the corollary.

\end{proof}

\begin{proof}[\textcolor{darkred}{\sc Proof of Theorem \ref{fully adaptive unknown operator}}]
Similarly to the proof of Theorem \ref{partially adaptive unknown operator} and since $\hpen$ is a nondecreasing sequence we have for all $1\leq m \leq\hM$
\begin{equation*}  \label{app:full:eq:4}
  \absV{\hell_{\hm}-\ell_\rep(\sol)}^2\lesssim\hpen_{m} +\bias_m+ \max_{m\leq m'\leq \hM} \vect{\absV{\hell_{m'}-\ell_\rep(\sol_{m'})}^2 -\frac{1}{6}\hpen_{m'}}_+.
\end{equation*}
Let us introduce the set
\begin{multline*}
  \cA:=\set{ \pen_{m}\leq\hpen_{m}\leq 8 \pen_{m}, \quad 1\leq m\leq\Mo}\cap\{\Mu\leq \hM\leq \Mo\},
\end{multline*}
then we conclude for all $1\leq m \leq \Mu$
\begin{equation*}
  \absV{\hell_{\hm}-\ell_\rep(\sol)}^2\1_{\cA}\lesssim \pen_{m} + \bias_m+ \max_{m\leq m'\leq \Mo} \vect{\absV{\hell_{m'}-\ell_\rep(\sol_{m'})}^2 -\frac{1}{6}\pen_{m'}}_+.
\end{equation*}

Thereby, similarly as in the proof of Theorem \ref{partially adaptive unknown operator} we obtain for all $\sol\in\Fswsr$   and $\rep\in\cF_{1/\sw}$ the upper bound for all $n\geq 1$
\begin{equation}\label{app:full:eq:5}
    \Ex \absV{\hell_{\hm}-\ell_\rep(\sol)}^2\1_{\cA}\leq
    \mathcal C\,\drepnl.
\end{equation}
Let us now evaluate the risk of the adaptive estimator $\hell_{\hm}$ on $\cA^c$. 
From the definition of $\Mh$ we infer $\|[h]_{\uMh}\|^2\leq [\rep]_1^2\,n\,\Mh$. Consequently, inequality \eqref{app:full:eq:6} together with \eqref{app:gen:upper:l2:e2:1} in Lemma \ref{app:gen:upper:l2}, \eqref{app:gen:upper:l3:e1} in Lemma \ref{app:gen:upper:l3} and Lemma \ref{adapt:full:lem:1} yields  for all $\sol\in\Fswsr$   and $\rep\in\cF_{1/\sw}$
\begin{equation*}\label{app:full:eq:6}
 \Ex|\hell_{\hm}-\ell_\rep(\sol)|^2\1_{\cA^c}
 \leq 2\,[\rep]_1^2\,n^2\Mh (\Ex\|V_{\Mh}\|^4)^{1/2}P(\cA^c)^{1/2}+ 6\sr\|\rep\|_{1/\sw}^2(1+Dd) P(\cA^c)\leq  \mathcal C\,n^{-1}.
\end{equation*}
The result follows by combining the last inequality with \eqref{app:full:eq:5}. 
\end{proof}

\paragraph{Technical assertions.}\hfill\\[1ex]
The following paragraph gathers technical results used in the proofs of Section  \ref{sec:adaptive}.
In the following we denote $\xi_s(w):=\sum_{j=1}^ms_jf_j(w)$ where $s\in\mathbb S^m=\{s\in\mathbb R^m:\|s\|=1\}$.

\begin{lem}\label{app:part:l2}
Let Assumptions \ref{ass:A1} and \ref{ass:A2} hold.
Then for all $n\geq 1$ and $1\leq m\leq\lfloor n^{1/4}\rfloor$ we have
\begin{equation*}
\sup_{P_{U|W}\in\cU_\sigma^\infty}\sup_{s\in\mathbb S^m} \Ex\Big[\Big(\frac{1}{n}\Big|\sum_{i=1}^nU_i\xi_s(W_i)\Big|^2-12\Ex[U^2](1+\log n)\Big)_+\Big]\leq C(\sigma, \eta)\, n^{-1}.
\end{equation*}
\end{lem}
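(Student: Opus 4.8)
The plan is to reduce the assertion, via the layer-cake identity $\Ex[(X-c)_+]=\int_c^\infty P(X>t)\,dt$, to a deviation bound for an i.i.d.\ sum that can then be integrated. Write $\zeta_i:=U_i\,\xi_s(W_i)$ and $S_n:=n^{-1/2}\sum_{i=1}^n\zeta_i$, so that the bracket in the statement equals $S_n^2-t_0$ with $t_0:=12\,\Ex[U^2](1+\log n)$. Since $\Ex[U_i\mid W_i]=0$ the variables $\zeta_i$ are i.i.d.\ and centred, so it is enough to prove $\int_{t_0}^{\infty} P(S_n^2>t)\,dt\le C(\sigma,\eta)\,n^{-1}$. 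For the dominant range of $t$ I would use a Bernstein-type bound $P(S_n^2>t)\le 2\exp\!\big(-t/(c_0\sigma^2)\big)$; integrating over $t\ge t_0$ yields $2c_0\sigma^2\exp\!\big(-t_0/(c_0\sigma^2)\big)$, and it is the logarithmic offset $1+\log n$ together with the numerical constant $12$ (measured against the Bernstein scale $c_0\sigma^2$) that turns this into a term of order $n^{-1}$.

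The first real step is to verify the Bernstein/Cramer moment condition for the $\zeta_i$. Conditioning on $W_i$ and using $P_{U|W}\in\cU_\sigma^\infty$ gives $\Ex[|\zeta_i|^k\mid W_i]=|\xi_s(W_i)|^k\,\Ex[|U_i|^k\mid W_i]\le \sigma^k\,k!\,|\xi_s(W_i)|^k$, so everything comes down to the moments of $\xi_s(W)=\sum_{j=1}^m s_j f_j(W)$. Orthonormality of $\{f_l\}_{l\ge1}$ in $L^2_W$ gives $\Ex[\xi_s^2(W)]=\|s\|^2=1$; and from $\xi_s^2(W)-1=\sum_{j,l}s_js_l\big(f_j(W)f_l(W)-\Ex[f_j(W)f_l(W)]\big)$, the triangle inequality in $L^k$ together with Assumption~\ref{ass:A2} yields $\big(\Ex|\xi_s^2(W)-1|^k\big)^{1/k}\le\eta\,(k!)^{1/k}\big(\sum_{j=1}^m|s_j|\big)^2\le \eta\,(k!)^{1/k}\,m$, hence $\Ex[\xi_s^{2k}(W)]\le(c\,\eta\,m\,k)^k$. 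Feeding this into the conditional moment bound produces a Bernstein inequality for $\sum_i\zeta_i$ with variance proxy of order $\sigma^2$ and with an Orlicz/scale parameter which, after the $n^{-1/2}$ normalisation, is of order $\sigma\eta\sqrt{m/n}$; this is exactly where the hypothesis $m\le\lfloor n^{1/4}\rfloor$ enters, making all residual powers of $m$ negligible against $n$. Equivalently, one may first restrict to the event $\{n^{-1}\sum_i\xi_s^2(W_i)\le2\}\cap\{\max_i\xi_s^2(W_i)\le c\,\eta m\log n\}$, of probability $1-O(n^{-2})$ by Assumption~\ref{ass:A2}, and then apply Bernstein conditionally on $W_1,\dots,W_n$ with conditional variance $\le\sigma^2 n^{-1}\sum_i\xi_s^2(W_i)\le2\sigma^2$.

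With the Bernstein bound in hand the sub-Gaussian regime contributes $O(\sigma^2 n^{-1})$ after integrating over $t\ge t_0$, and it remains to handle large $t$ (outside that regime) and the exceptional event. For that I would use a crude moment/Markov argument: a Rosenthal inequality gives $\Ex[S_n^{2k}]\le C_k\big((\Ex\zeta^2)^k+n^{1-k}\Ex|\zeta|^{2k}\big)$, and since $\Ex[\zeta^2]\le\sigma^2$ while $\Ex|\zeta|^{2k}\le\sigma^{2k}(2k)!\,(c\eta m k)^k$ with $m\le n^{1/4}$, the term $n^{1-k}\Ex|\zeta|^{2k}=o(1)$ for every fixed $k\ge2$, so $\Ex[S_n^{2k}]\le C(k,\sigma,\eta)$; choosing $k$ a fixed moderately large integer then makes these remaining contributions $O(n^{-1})$.

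The hard part will be the moment control of $\xi_s(W)$ in the second step. The naive estimate $|\xi_s(W)|\le\eta\sqrt m$ (available, say, for uniformly bounded bases) is far too lossy: it inflates the effective variance of $\zeta_i$ by a factor $m$ and would only give a tail exponent of order $\log n/m$. The point is to exploit Assumption~\ref{ass:A2} to see that $\xi_s^2(W)$ concentrates around its mean $1$ with sub-exponential fluctuations of size $O(\eta m)$, and then to absorb every remaining power of $m$ against the $\sqrt n$ coming from the averaging, which is precisely why the statement is restricted to $m\le\lfloor n^{1/4}\rfloor$. Once this is in place, the constant $12$ and the $1+\log n$ offset do the rest.
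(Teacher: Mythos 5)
Your proposal follows essentially the same route as the paper: write $\Ex[(X-\delta)_+]=\int_0^\infty P(X>t+\delta)\,dt$, apply Bernstein conditionally on $W_1,\dots,W_n$ using $P_{U|W}\in\cU_\sigma^\infty$, restrict to a high-probability event on which the conditional variance $\sigma^2 n^{-1}\sum_i\xi_s^2(W_i)$ is bounded by an absolute constant, and handle the complement by a Cauchy--Schwarz/moment argument, with $m\le\lfloor n^{1/4}\rfloor$ controlling everything. One thing worth flagging: the unconditional Bernstein sketched in your first paragraph does not directly apply to $\zeta_i=U_i\xi_s(W_i)$, because the product of two variables satisfying Cram\'er's condition is not itself Cram\'er (your own moment bound $\Ex|\zeta|^k\lesssim\sigma^k k!\,(c\eta m k)^{k/2}$ grows like $k!\cdot k^{k/2}$, not $k!\,\kappa^k$) --- but you correctly side-step this with the conditional alternative. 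The main cosmetic difference from the paper is the choice of good event: the paper takes a uniform bound on the empirical Gram matrix entries ($|n^{-1}\sum_i f_j(W_i)f_l(W_i)-\delta_{jl}|\le\log n/(3\sqrt n)$ for all $j,l\le m$, an $s$-independent event, followed by Cauchy--Schwarz) and then controls the second, linear Bernstein term through the concavity of $x\mapsto\exp(-1/x)$ together with $\Ex|\xi_s(W)|^4\le m^2\eta^4$, whereas you propose to control $n^{-1}\sum_i\xi_s^2(W_i)$ and $\max_i\xi_s^2(W_i)$ directly in the good event. Both work and both rest on Assumption~\ref{ass:A2} in the same way, so this is a genuinely equivalent implementation rather than a different argument.
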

\begin{proof}
Let us denote $\delta=12\Ex[U^2](1+\log n)$. Since the error term $U$ satisfies Cramer's condition we may apply Bernstein's inequality and since $\Ex[U^2|W]\leq\sigma^2$ we have 
\begin{multline}\label{app:part:l2:eq1}
 \Ex\Big[\Big(\frac{1}{n}\Big|\sum_{i=1}^nU_i\xi_s(W_i)\Big|^2-\delta\Big)_+|W_1,\dots,W_n\Big]\\\hfill
 =\int_0^\infty P\Big(\sum_{i=1}^nU_i\xi_s(W_i)\geq \sqrt{n(t+\delta)}|W_1,\dots,W_n\Big)dt\\
 \leq \int_0^\infty\exp\Big(\frac{-n(t+\delta)}{8 \sigma^2\sum_{i=1}^n|\xi_s(W_i)|^2}\Big)dt
 + \int_0^\infty\exp\Big(\frac{-\sqrt{n(t+\delta)}}{4 \sigma \max_{1\leq i\leq n}|\xi_s(W_i)|}\Big)dt.
\end{multline} 
Consider the first summand of \eqref{app:part:l2:eq1}.
Let us introduce the set
\begin{equation*}
\cB:=\set{\forall 1\leq j,l\leq m:\,|n^{-1}\sum_{i=1}^nf_j(W_i)f_l(W_i)-\delta_{jl}|\leq \frac{\log n}{3\sqrt n}}
\end{equation*}
where $\delta_{jl}=1$ if $j=l$ and zero otherwise.
Applying Cauchy-Schwarz's inequality twice we observe on $\cB$ for all $n\geq 1$ and $1\leq m\leq \Mo$
\begin{equation*}
| n^{-1}\sum_{i=1}^n|\xi_s(W_i)|^2-1|\1_\cB\leq \sum_{j,l=1}^m|z_j| |z_l| |n^{-1}\sum_{i=1}^nf_j(W_i)f_l(W_i)-\delta_{jl}|\1_\cB
 \leq \frac{1}{2}
\end{equation*}
since $n^{-1/4}\log n\leq 3/2$ for all $n\geq 1$. Thereby, it holds $ n^{-1}\sum_{i=1}^n|\xi_s(W_i)|^2\1_\cB\leq3/2$   and thus, 
\begin{multline}\label{app:part:l2:eq2}
 n\Ex\Big[\int_0^{\infty}\exp\Big(\frac{-n(t+\delta)}{8 \sigma^2\sum_{i=1}^n|\xi_s(W_i)|^2}\Big)dt\1_\cB\Big]
 \leq 12\sigma^2\exp\Big(\log n-\frac{\delta}{12 \sigma^2}\Big)
 \leq 6\sigma^2.
\end{multline}
On the complement of $\cB$ observe that $\sup_{j,l}\Var (f_j(W)f_l(W))<\eta^2$ due that Assumption \ref{ass:A1} (i) and thus, Assumption \ref{ass:A2} together with Bernstein's inequality yields
\begin{multline*}
 P(\cB^c)\leq \sum_{j,l=1}^mP\Big(3\big|\sum_{i=1}^nf_j(W_i)f_l(W_i)-\delta_{jl}\big|>\sqrt n \log n\Big)\\
 \leq 2m^2\exp\Big(-\frac{ n(\log n)^2}{36 n \eta^4+6\eta\sqrt n\log n}\Big)
\leq 2 \exp\Big(2\log m-\frac{(\log n)^2}{42\eta^4}\Big).
\end{multline*}

By Assumption \ref{ass:A1} (i) it holds $\Ex|\xi_s(W)|^4\leq \Ex|\sum_{j=1}^mf_j^2(W)|^2\leq m^2\eta^4$.
 Thereby
\begin{equation}\label{app:part:l2:eq2b}
 n\Ex\Big[\int_0^{\infty}\exp\Big(\frac{-n(t+\delta)}{8 \sigma^2\sum_{i=1}^n|\xi_s(W_i)|^2}\Big)dt\1_{\cB^c}\Big]
 \leq 8\sigma^2n\big(\Ex|\xi_s(W_1)|^4 P(\cB^c)\big)^{1/2}
 \leq 12\sigma^2\eta^2
\end{equation}
for all $n\geq \exp(126\eta^4)$ and $1\leq m\leq \lfloor n^{1/4}\rfloor$. For $n< \exp(126\eta^4)$ it holds $n\Ex[|\xi_s(W_1)|^2\1_{\cB^c}]< \exp(126\eta^4)$.
Consider the second summand of \eqref{app:part:l2:eq1}. 
Since $\exp(-1/x)$, $x>0$, is a concave function and $\Ex|\xi_s(W)|^4\leq  m^2\eta^4$ we deduce for all $1\leq m\leq \lfloor n^{1/4}\rfloor$
\begin{multline}\label{app:part:l2:eq3}
  \Ex\Big[\int_{0}^\infty\exp\Big(\frac{-\sqrt{n(t+\delta)}}{4 \sigma \max_{1\leq i\leq n}|\xi_s(W_i)|}\Big)dt\Big]
\leq \int_0^\infty\exp\Big(\frac{-\sqrt{n(t+\delta)}}{4 \sigma \Ex\max_{1\leq i\leq n}|\xi_s(W_i)|}\Big)dt\\
\leq \int_0^\infty\exp\Big(\frac{-\sqrt{n(t+\delta)}}{4 \sigma (n\Ex|\xi_s(W)|^4)^{1/4}}\Big)dt
\leq \int_0^\infty\exp\Big(\frac{-n^{1/4}\sqrt{(t+\delta)}}{4 \sigma\,\eta\sqrt m}\Big)dt\\
  \leq 8\sigma\,\eta \sqrt{m/n}\exp\Big(\frac{-n^{1/4}\sqrt\delta}{4 \sigma\,\eta\sqrt m}\Big)
  \Big(n^{1/4}\sqrt\delta+4\sigma\,\eta\sqrt m\Big)
\leq C(\sigma, \eta)n^{-1}.
\end{multline}
The assertion follows now by combining inequality \eqref{app:part:l2:eq1} with \eqref{app:part:l2:eq2}, \eqref{app:part:l2:eq2b}, and \eqref{app:part:l2:eq3}.
\end{proof}

\begin{lem}\label{app:part:l3}
Let Assumptions \ref{ass:reg} and \ref{ass:A1} hold. Then for all $n\geq 1$ and $m\geq 1$ we have
 \begin{equation*}
  \sup_{\op\in\cTdDw} \sup_{s\in\mathbb S^m}\Ex\Big[\Big(\frac{1}{n}\Big|\sum_{i=1}^n(\sol(Z_i)-\sol_m(Z_i))\xi_s(W_i)\Big|^2-48\eta^2\sr\frac{m^3}{\sw_m}(1+\log n)\Big)_+\Big]\leq C(\eta, \sw,\sr,D) n^{-1}.
\end{equation*}
\end{lem}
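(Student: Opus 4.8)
The plan is to follow the scheme of the proof of Lemma~\ref{app:part:l2}. Put $\zeta_i:=\big(\sol(Z_i)-\sol_m(Z_i)\big)\,\xi_s(W_i)$ for $1\leq i\leq n$; these are iid, and since $[T(\sol-\sol_m)]_j=0$ for $1\leq j\leq m$ (recalled at the beginning of Appendix~\ref{app:proofs:gen}) they are centred. Writing $\delta_m:=48\eta^2\sr m^3\sw_m^{-1}(1+\log n)$ and using the layer-cake formula,
\begin{equation*}
\Ex\Big[\Big(\tfrac1n\Big|\textstyle\sum_{i=1}^n\zeta_i\Big|^2-\delta_m\Big)_+\Big]=\int_0^\infty P\Big(\Big|\textstyle\sum_{i=1}^n\zeta_i\Big|\geq\sqrt{n(t+\delta_m)}\Big)\,dt,
\end{equation*}
so it suffices to produce a deviation bound for the iid sum $\sum_i\zeta_i$ whose $t$-integral is $\mathcal O(n^{-1})$.

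The key ingredient is a Bernstein-type moment estimate for $\zeta$. Expanding $\sol-\sol_m=\sum_{l\geq1}[\sol-\sol_m]_l e_l$ and using $[T(\sol-\sol_m)]_j=0$ once more yields the centred representation $\zeta=\sum_{l\geq1}[\sol-\sol_m]_l\,\eta_l$ with $\eta_l:=\sum_{j=1}^m s_j\{e_l(Z)f_j(W)-[T]_{j,l}\}$. For the variance I would split $\sol-\sol_m=E_m^\perp\sol+(E_m\sol-\sol_m)$: conditioning on $W$ and using Assumption~\ref{ass:A1}(i), the projection part contributes at most $\sr\eta^2\sum_{l>m}\sw_l^{-1}\lesssim\sr\eta^2 m^3\sw_m^{-1}$ (the last step by $|l|^3\sw_l^{-1}=o(1)$ in Assumption~\ref{ass:reg}), while for the Galerkin-bias part one uses the sharp $L^2_Z$-estimate $\normV{E_m\sol-\sol_m}_Z^2\lesssim\sr\sw_m^{-1}$ (arguing as in the proof of Lemma~\ref{app:gen:upper:l3}, combining \eqref{app:gen:upper:l3:e1:2} with the link condition \eqref{bm:link:gen}) together with Cauchy--Schwarz over the $m$ active coordinates to get $\Ex[((E_m\sol-\sol_m)(Z))^2\,|\,W]\leq m\eta^2\normV{E_m\sol-\sol_m}_Z^2\lesssim\sr\eta^2 m^3\sw_m^{-1}$. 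Hence the variance proxy $v:=\Ex\zeta^2$ is of order $\eta^2\sr\,m^3\sw_m^{-1}$, i.e.\ $\delta_m/(1+\log n)$ up to a constant. For the scale, Cramer's condition (Assumption~\ref{ass:A1}(ii)) gives $\Ex|e_l(Z)f_j(W)-[T]_{j,l}|^k\leq\eta^k k!$, so by Minkowski $\Ex|\eta_l|^k\leq(\sqrt m\,\eta)^k k!$ and $\Ex|\zeta|^k\leq\big(C(\sw)^{1/2}\eta\,\normV{\sol-\sol_m}_\sw\sqrt m\big)^k k!$, which with $\normV{\sol-\sol_m}_\sw^2\leq 2(1+Dd)\sr$ from \eqref{app:gen:upper:l3:e1} in Lemma~\ref{app:gen:upper:l3} delivers a Bernstein condition $\Ex|\zeta|^k\leq\tfrac12 k!\,v\,c^{k-2}$ with $v\lesssim\eta^2\sr m^3\sw_m^{-1}$ and scale $c\lesssim\eta\sqrt{m\,\sr}$.

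With this, Bernstein's inequality gives $P(|\sum_i\zeta_i|\geq x)\leq2\exp\big(-x^2/(4nv+4cx)\big)$; inserting $x=\sqrt{n(t+\delta_m)}$ and splitting at $nv=cx$, the exponent is $\leq-(t+\delta_m)/(4v)$ in the Gaussian regime and $\leq-\sqrt{n(t+\delta_m)}/(4c)$ in the Poissonian regime, exactly as in the final display of the proof of Lemma~\ref{app:part:l2}. Since the coefficient $48$ is large enough that $\delta_m\geq4v\log(nv)$ (thanks to the $(1+\log n)$ factor and $m^3\sw_m^{-1}\leq\sup_l|l|^3\sw_l^{-1}<\infty$), one has $\int_0^\infty e^{-(t+\delta_m)/(4v)}dt=4v\,e^{-\delta_m/(4v)}\lesssim n^{-1}$, and the substitution $u=\sqrt{n(t+\delta_m)}$ turns the second integral into $\tfrac{8c}{n}\,e^{-\sqrt{n\delta_m}/(4c)}(\sqrt{n\delta_m}+4c)$, again $\mathcal O(n^{-1})$. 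As in Lemma~\ref{app:part:l2}, a trivial regime is split off first — here the values of $m$ for which $m^3\sw_m^{-1}$ is already $\lesssim n^{-1}$, where $\Ex[(\cdots)_+]\leq\Ex[\cdots]= v\lesssim n^{-1}$ directly — so that in the complementary range the scale $c$ remains under control. The step I expect to be the main obstacle is precisely this two-sided bookkeeping of $v$ and $c$: one must avoid the crude $\mathcal O(1)$ bound $\normV{\sol-\sol_m}_\sw^2\leq 2(1+Dd)\sr$ for the variance and instead extract the genuine decay $m^3\sw_m^{-1}$ from the Galerkin-bias term $E_m\sol-\sol_m$ via its small $L^2_Z$-norm and the tail estimate $\sum_{l>m}\sw_l^{-1}\lesssim m^3\sw_m^{-1}$, while simultaneously keeping the scale $c$ small enough (using the case split and the factor $(1+\log n)$) that the Poissonian tail still integrates to $\mathcal O(n^{-1})$.
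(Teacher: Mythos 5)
Your scheme (layer--cake plus Bernstein applied to the centred iid variables $\zeta_i=(\sol(Z_i)-\sol_m(Z_i))\xi_s(W_i)$) is indeed the paper's scheme, and you have correctly identified the crux --- getting the variance proxy and the scale constant in the Bernstein condition to match the threshold $48\eta^2\sr\,m^3\sw_m^{-1}(1+\log n)$. But the step you flag as ``the main obstacle'' is a genuine gap, and the case split you propose does not close it.

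The problem is in the Cramer moment bound. Via Minkowski you obtain $\Ex|\zeta|^k\leq\big(C(\sw)^{1/2}\eta\,\normV{\sol-\sol_m}_\sw\sqrt m\big)^k k!$, i.e.\ a scale $c_0\lesssim\eta\sqrt{m\sr}$. You then assert that this ``delivers a Bernstein condition $\Ex|\zeta|^k\leq\tfrac12 k!\,v\,c^{k-2}$ with $v\lesssim\eta^2\sr m^3\sw_m^{-1}$ and $c\lesssim\eta\sqrt{m\sr}$''. That implication is false: already at $k=3$ it would require $c_0^3\cdot3!\leq\tfrac12\cdot3!\,v\,c_0$, i.e.\ $2c_0^2\leq v$, i.e.\ $\eta^2 m\sr\lesssim \eta^2\sr m^3\sw_m^{-1}$, i.e.\ $\sw_m\lesssim m^2$ --- which contradicts $m^3\sw_m^{-1}=o(1)$. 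Put differently, the genuine variance $v\sim m^3\sw_m^{-1}$ is an order $m^2\sw_m^{-1}\to0$ \emph{smaller} than the squared Minkowski scale $c_0^2\sim m$, so you cannot use both simultaneously in Bernstein. And neither choice alone works: taking the variance proxy $v'=c_0^2\sim m$ makes the Gaussian term $v'e^{-\delta_m/(4v')}$ blow up because $\delta_m/v'\sim m^2\sw_m^{-1}\log n$ need not be $\gtrsim\log n$; keeping $v\sim m^3\sw_m^{-1}$ but inflating $c$ to $2c_0^2/v\sim\sw_m/m^2$ makes the Poissonian term $\tfrac{c}{n}(\sqrt{n\delta_m}+c)e^{-\sqrt{n\delta_m}/(4c)}$ blow up. A concrete failure of your final display: $\sw_j\sim j^4$, $m\sim\sqrt n$ (well inside the non--trivial regime $m^3\sw_m^{-1}=m^{-1}\gtrsim n^{-1}$) gives $c_0\sim n^{1/4}$, $\sqrt{n\delta_m}\sim n^{1/4}\sqrt{\log n}$, so $\tfrac{8c_0}{n}(\sqrt{n\delta_m}+4c_0)e^{-\sqrt{n\delta_m}/(4c_0)}\sim n^{-1/2}\sqrt{\log n}\,e^{-\sqrt{\log n}/4}$, which is not $\mathcal O(n^{-1})$. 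Restricting further to $m\leq n^{1/4}$ (all that matters in the application) does not help either, as $c_0^2/n$ is still only $\mathcal O(n^{-3/4})$.

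The missing idea is the paper's choice of the \emph{explicit summable weight} $w_j=j^2$ (rather than $\ell^1$/Minkowski or duality against $\sw$): writing $\zeta=\sum_l[\sol-\sol_m]_l\eta_l$ and applying Cauchy--Schwarz against $w$ pulls out $\normV{\sol-\sol_m}_w$, and combining the ``in'' and ``out'' pieces with the link condition gives $m\normV{\sol-\sol_m}_w^2\lesssim m^3\sw_m^{-1}\sr$. Both $\Ex\zeta^2$ and all higher Cramer moments are then controlled by the \emph{same} quantity $\mu_m\propto\sqrt{m^3\sw_m^{-1}}$, i.e.\ $\Ex|\zeta|^k\leq\mu_m^k k!$, so Bernstein can be applied with variance and scale both equal to $\mu_m$ and no extra case split is needed: the Gaussian term gives $\mu_m^2 e^{-(1+\log n)}\lesssim n^{-1}$, and the Poissonian term gives $e^{-\sqrt{n(1+\log n)}/2}$ up to polynomial prefactors. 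Your split of $\sol-\sol_m$ into $E_m^\perp\sol$ and $E_m\sol-\sol_m$ and the claimed tail estimate $\sum_{l>m}\sw_l^{-1}\lesssim m^3\sw_m^{-1}$ are serviceable for the variance proxy, but they never produce a comparable gain in the \emph{higher} moments, and that is what sinks the argument.
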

\begin{proof}
Let us consider a sequence $w:=(w_j)_{j\geq 1}$ with $w_j:=j^2$. Since $[\op(\sol-\sol_m)]_\um=0$ we conclude for $m\geq 1$, $s\in\mathbb S^m$, and $k=2,3,\dots$ that
 \begin{multline*}
  \Ex|(\sol(Z)-\sol_m(Z))\xi_s(W)|^k=\Ex|\sum_{l=1}^\infty[\sol-\sol_m]_l\sum_{j=1}^ms_j(e_l(Z)f_j(W)-[T]_{jl})|^k\\\hfill
\leq \|\sol-\sol_m\|_w^k \Ex|\sum_{l=1}^\infty w_l^{-1}\sum_{j=1}^m(e_l(Z)f_j(W)-[T]_{jl})^2|^{k/2}\\
\leq \|\sol-\sol_m\|_w^k m^{k/2}(\pi/\sqrt6)^{k}\sup_{j,l\in\mathbb N}\Ex|e_l(Z)f_j(W)-[T]_{jl}|^{k}
 \end{multline*}
where due to Assumption \ref{ass:A1} (i)  $\sup_{j,l\in\mathbb N}\Var(e_l(Z)f_j(W))\leq \eta^2$ and due to
Assumption \ref{ass:A1} (ii) it holds $\sup_{j,l\in\mathbb N}\Ex|e_l(Z)f_j(W)-[T]_{jl}|^{k}\leq k!\eta^k$ for $k\geq 3$.
 Moreover, similarly to the proof of  \eqref{app:gen:upper:l3:e1} in Lemma \ref{app:gen:upper:l3} we conclude
 $ m^{k/2}\|\sol-\sol_m\|_w^k\leq (m^3\sw_m^{-1})^{k/2}(2+2Dd)^{k/2}\sr^{k/2}$. Let us denote $\mu_m:=\eta\,(1+Dd)\sqrt{6\sr \, m^3\sw_m^{-1}}$.
Consequently, for all $m\geq 1$ we have $\Ex|(\sol(Z)-\sol_m(Z))\xi_s(W)|^2\leq \mu_m^2 $ and 
\begin{equation}\label{app:part:l3:cram}
  \sup_{s\in\mathbb S^m}\Ex|(\sol(Z)-\sol_m(Z))\xi_s(W)|^k\leq \mu_m^k k!\text{ for }k=3,4,\dots.
 \end{equation}
Now Bernstein's inequality gives for all $m\geq 1$
\begin{multline*}
\sup_{s\in\mathbb S^m} \Ex\Big[\Big(\frac{1}{n}\Big|\sum_{i=1}^n(\sol(Z_i)-\sol_m(Z_i))\xi_s(W_i)\Big|^2-8\mu_m^2(1+\log n)\Big)_+\Big]\\\hfill
 \leq2 \int_0^\infty\exp\Big(\frac{-(t+\delta)}{8\mu_m^2}\Big)dt+
2 \int_0^\infty\exp\Big(\frac{-\sqrt{n(t+\delta)}}{4\mu_m}\Big)dt\\\hfill
\leq 16\mu_m^2\exp(-\log n)+16\mu_m n^{-1/2}\exp\Big(\frac{-\sqrt{n(1+\log n)}}{2}\Big)(4\mu_m+\sqrt{8n\mu_m^2(1+\log n)})\\
\leq C(\eta, \sw,\sr,D)n^{-1}
\end{multline*}
and thus, the assertion follows.

\end{proof}

 \begin{lem}\label{app:part:l6}
Let $T\in\cT_{d,D}^\opw$. Then for all $n\geq 1$  it holds
 \begin{gather}
  P\big(\bigcup_{m=1}^{\Mo} \mho_m^c\big)\leq C(\rep,\opw,\eta, D) n^{-4}, \label{app:part:l6:c:1}\\
  P\big(\bigcup_{m=1}^{\Mo} \Omega_m^c\big)\leq C(\rep,\opw,\eta, D)\, n^{-1}.\label{app:part:l6:c:2}
 \end{gather}
\end{lem}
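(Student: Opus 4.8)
The plan is to reduce both statements to the Bernstein-type deviation bound for $\normV{\Xi_m}$ proved in Lemma~\ref{pr:minimax:l1}, exploiting that the union ranges over at most $\Mh\leq\lfloor n^{1/4}\rfloor$ indices whereas the definition of $\Mo$ forces the relevant exponent to be of order $n/a_n=n^{1/\log(2+\log n)}(1+\log n)$, which grows faster than any power of $\log n$.

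First I would treat \eqref{app:part:l6:c:1}. Since $T\in\cTdDw$, the extended link condition gives the uniform bound $\normV{[T]_{\um}^{-1}}^2\leq D\,\opw_m^{-1}$ (recall $\opw$ is nonincreasing), so $\mho_m^c\subset\{m^{-2}n\normV{\Xi_m}^2> n\opw_m/(4Dm^3)\}$; since $n\opw_m/(4Dm^3)\leq n/(4D)\leq 4\eta^2 n$, Lemma~\ref{pr:minimax:l1} yields $P(\mho_m^c)\leq 2\exp(-n\opw_m/(32D\eta^2 m^3)+2\log m)$. Next, for $2\leq m\leq\Mo$ the definition \eqref{part:def:bounds:o} gives $\opw_m^{-1}m^3\max_{1\leq j\leq m}[h]_j^2\leq 4Da_n$, and since $\max_{1\leq j\leq m}[h]_j^2\geq[h]_1^2>0$ (the standing hypothesis $\fou{\rep}_1\neq0$) this forces $n\opw_m/m^3\geq n[h]_1^2/(4Da_n)$; the same holds for $m=1$ once $[h]_1^2\leq 4Da_n$, which is true for $n$ large. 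Summing over $m\leq\Mo$ and bounding $\log m\leq\tfrac14\log n$ then gives
\[
 P\Bigl(\bigcup_{m=1}^{\Mo}\mho_m^c\Bigr)\leq 2\exp\Bigl(-\frac{[h]_1^2\,n}{128\,D^2\eta^2\,a_n}+\tfrac34\log n\Bigr),
\]
and since $n/a_n$ eventually exceeds any multiple of $\log n$, the right-hand side is $\leq 2n^{-4}$ for $n$ beyond a threshold depending only on $\rep$ and $D$; for the finitely many remaining $n$ the trivial bound $P\leq1$ suffices once the constant is enlarged to depend on $\rep,\opw,\eta,D$.

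For \eqref{app:part:l6:c:2} I would use that $\mho_m\subset\Omega_m$ whenever $\sqrt n\geq 2\normV{[T]_{\um}^{-1}}$, i.e. whenever $n\opw_m\geq 4D$. For $m\leq\Mo$ the estimate just obtained gives $n\opw_m\geq n[h]_1^2/(4Da_n)\to\infty$, so this holds for all such $m$ once $n$ passes a threshold depending only on $\rep,\opw,D$; for those $n$ one has $\Omega_m^c\subset\mho_m^c$ for every $m\leq\Mo$, whence $P(\bigcup_m\Omega_m^c)\leq P(\bigcup_m\mho_m^c)\leq Cn^{-4}\leq Cn^{-1}$ by \eqref{app:part:l6:c:1}; for the remaining small $n$ the trivial bound $P\leq1\leq Cn^{-1}$ closes the argument.

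The only delicate point is the bookkeeping around the non-polynomial factor $a_n$: I have to check that $n/a_n$ dominates both the entropy factor $n^{1/4}$ coming from the union over $m$ and the extra power of $n$ I want to extract, and that every threshold can be chosen depending only on the listed constants, uniformly over $T\in\cTdDw$ — which is precisely where the link-condition bound $\normV{[T]_{\um}^{-1}}^2\leq D\opw_m^{-1}$ is needed. No further analytic difficulty arises.
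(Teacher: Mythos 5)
Your proposal is correct and follows essentially the same route as the paper's proof: both bound $P(\mho_m^c)$ via Lemma~\ref{pr:minimax:l1} after inserting $\normV{[T]_{\um}^{-1}}^2\leq D\opw_m^{-1}$, both extract from the definition of $\Mo$ the lower bound $n\opw_m/m^3\gtrsim n/a_n$ (the paper keeps $n\opw_{\Mo}/(\Mo)^3$ explicit; you additionally substitute the $a_n$-bound and track $[h]_1^2$, which is a harmless refinement), both absorb the union over $m\leq\Mo\leq n^{1/4}$ and the $2\log m$ term into an overall $O(\log n)$ cost that is dominated by $n/a_n$, and both reduce \eqref{app:part:l6:c:2} to \eqref{app:part:l6:c:1} via $\mho_m\subset\Omega_m$ once $n\geq 4D\opw_m^{-1}$, handling small $n$ by enlarging the constant.
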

\begin{proof}
 Proof of \eqref{app:part:l6:c:1}.
Since $T\in\cT_{d,D}^\opw$ we have $\|[\op]_{\um}^{-1}\|^2\leq D \opw_m^{-1}$ and thus, exploiting Lemma \ref{pr:minimax:l1} together with the definition of $\Mo$ gives
\begin{multline*}
  n^4P\Big(\bigcup_{m=1}^{\Mo} \mho_m^c\Big)\leq 
   2\exp\Big(-\frac{1}{48\eta D}\frac{n\opw_{\Mo}}{(\Mo)^3}+3\log \Mo+4\log n\Big)\leq C(\rep,\opw,\eta, D).
\end{multline*}
  Proof of \eqref{app:part:l6:c:2}. Due to the definition of $\Mo$ there exists some $n_0\geq 1$ such that $n\geq 4D\opw_{\Mo}^{-1}$ for all $n\geq n_0$. Thereby, condition $T\in\cT_{d,D}^\opw$ implies $\max_{1\leq m\leq \Mo}\|[\op]_{\um}^{-1}\|^2\leq D\opw_{\Mo}^{-1} \leq n/4$ for all $n\geq n_0$. This gives $\bigcup_{m=1}^{\Mo} \Omega_m^c\subset\bigcup_{m=1}^{\Mo} \mho_m^c$ and inequality \eqref{app:part:l6:c:2} follows by making use of \eqref{app:part:l6:c:1}. If $n<n_0$ then $nP\big(\bigcup_{m=1}^{\Mo} \Omega_m^c\big)\leq n_0$ and the assertion follows since $n_0$ only depends on $\rep$, $\opw$ and $D$.
\end{proof}

\begin{lem}\label{app:upperboundmodel:l1}Let $T\in\cT_{d,D}^\opw$. Then it holds  $\Mu\leq M_n\leq \Mo$ for all $n\geq 1$.\end{lem}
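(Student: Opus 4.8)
The plan is to reduce the whole statement to the two–sided spectral estimate
\begin{equation*}
(D\,\opw_m)^{-1}\;\leq\;\normV{[\op]_{\um}^{-1}}^2\;\leq\;D\,\opw_m^{-1},\qquad m\geq1,
\end{equation*}
valid for every $\op\in\cT_{d,D}^\opw$. The upper bound is the one already invoked repeatedly above: it follows from the extended link condition \eqref{bm:link:gen}, since $[\Diag_\opw]_{\um}^{-1/2}$ is diagonal with spectral norm $\opw_m^{-1/2}$ (recall $\opw$ is nonincreasing with $\opw_1=1$), so that $\normV{[\op]_{\um}^{-1}}\leq\normV{[\Diag_\opw]_{\um}^{-1/2}}\,\normV{[\Diag_\opw]_{\um}^{1/2}[\op]_{\um}^{-1}}\leq\opw_m^{-1/2}\sqrt D$. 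For the lower bound I would use that the smallest singular value of $[\op]_{\um}$ is at most the Euclidean norm of its last column: evaluating $[\op]_{\um}$ at $(0,\dots,0,1)^t\in\R^m$ produces a vector of squared norm $\sum_{l=1}^m|\skalarV{\op e_m,f_l}_W|^2=\normV{F_m\op e_m}_W^2\leq\normV{\op e_m}_W^2\leq d\,\opw_m$ by the link condition \eqref{bm:link}, whence $\normV{[\op]_{\um}^{-1}}^2=\sigma_{\min}([\op]_{\um})^{-2}\geq(d\,\opw_m)^{-1}\geq(D\,\opw_m)^{-1}$ because $D\geq d$.

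Next I would simply compare the three index sets that cut off $M_n$, $\Mo$ and $\Mu$. Writing $b_m:=m^3\max_{1\leq j\leq m}[h]_j^2$, these are $\cA:=\set{2\leq m\leq\Mh:\ \normV{[\op]_{\um}^{-1}}^2 b_m>a_n}$ (so that $M_n=\min\cA-1$, and $M_n=\Mh$ if $\cA=\emptyset$, cf.\ Definition~\ref{part:def:1}), $\cA^\circ:=\set{2\leq m\leq\Mh:\ \opw_m^{-1}b_m>4D\,a_n}$ (so that $\Mo=\min\cA^\circ-1$, cf.\ \eqref{part:def:bounds:o}), and $\cA_\circ:=\set{2\leq m\leq\Mh:\ 4D\,\opw_m^{-1}b_m>a_n}$ (so that $\Mu=\min\cA_\circ-1$, cf.\ \eqref{part:def:bounds:u}), the same empty–set convention being in force in all three. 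The two–sided estimate then yields the inclusions $\cA^\circ\subseteq\cA\subseteq\cA_\circ$: indeed, if $m\in\cA^\circ$ then $\normV{[\op]_{\um}^{-1}}^2 b_m\geq D^{-1}\opw_m^{-1}b_m>4a_n>a_n$, so $m\in\cA$; and if $m\in\cA$ then $4D\,\opw_m^{-1}b_m\geq D\,\opw_m^{-1}b_m\geq\normV{[\op]_{\um}^{-1}}^2 b_m>a_n$, so $m\in\cA_\circ$.

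It then remains to read off $\Mu\leq M_n\leq\Mo$ from these two inclusions. If $\cA^\circ=\emptyset$ then $M_n\leq\Mh=\Mo$; if $\cA^\circ\neq\emptyset$ then $\cA\neq\emptyset$ and $\min\cA\leq\min\cA^\circ$, whence $M_n=\min\cA-1\leq\min\cA^\circ-1=\Mo$. Symmetrically, if $\cA=\emptyset$ then $\Mu\leq\Mh=M_n$, while if $\cA\neq\emptyset$ then $\cA_\circ\neq\emptyset$ and $\min\cA_\circ\leq\min\cA$, whence $\Mu=\min\cA_\circ-1\leq\min\cA-1=M_n$. The argument is entirely deterministic (the sample never enters); the only point that needs a little care is the bookkeeping with the empty–set conventions in this last step, and the only ingredient going beyond what is already available in the paper is the \emph{lower} bound $\normV{[\op]_{\um}^{-1}}^2\geq(D\,\opw_m)^{-1}$ established above via the column–norm estimate.
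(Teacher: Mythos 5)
Your proof is correct and takes essentially the same approach as the paper: both establish the two‑sided bound $(D\opw_m)^{-1}\leq\normV{[\op]_{\um}^{-1}}^2\leq D\opw_m^{-1}$ (the lower bound via the action of $[\op]_{\um}$ on the unit vector $e_m$, which is in substance the paper's choice of test function $\opw_m^{-1/2}e_m$; the upper bound via submultiplicativity against the extended link condition \eqref{bm:link:gen}) and then compare the three defining thresholds. The only difference is cosmetic: you package the comparison as set inclusions $\cA^\circ\subseteq\cA\subseteq\cA_\circ$ and read off the order of the truncated minima, whereas the paper runs a short case analysis on the boundary cases $M_n=1$, $M_n=\Mh$, etc.; your phrasing is a bit more streamlined but amounts to the same bookkeeping.
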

\begin{proof}[{\sc Proof.}]
Consider $\Mu\leq M_n$.  If $\Mu=1$ or $M_n=\Mh$ the result is trivial.   If $M_n=1$, then clearly $\Mu=1$. It remains to consider  $\Mu>1$ and
  $\Mh>M_n>1$. Due to $T\in\cT_{d,D}^\opw$ it holds $\normV{\fou{\op}_{\underline{M_n+1}}^{-1}}^{-2}\geq D^{-1}\opw_{M_n+1}$ and thus, by the definition of $M_n$ and $\Mu$ it is easily seen that
  \begin{equation*}
\frac{\Opw_{\Mu}}{\max\limits_{1\leq j\leq\Mu}[h]_j^2 (\Mu)^3}> \frac{4 \Opw_{M_n+1}}{\max\limits_{1\leq j\leq M_n+1}[\rep]_j^2(M_n+1)^3},
\end{equation*}
 and thus, $M_n+1 > \Mu$, i.e. $M_n\geq \Mu$.
Consider $ M_n\leq \Mo$.  If $M_n=1$ or $\Mo=\Mh$ the result is trivial, while otherwise since $\opw_m^{-1}\leq \normV{\fou{\op}_{\um}^{-1}}^{2}\sup_{\|E_m\phi\|_\opw=1}\|F_m\op E_m\phi\|^2
\leq D\normV{\fou{\op}_{\um}^{-1}}^{2}$ due to condition $\op\in\cTdw$ with $d\leq D$ and by the definition of $M_n$ and $\Mo$ it follows 
  \begin{equation*}
\frac{\Opw_{M_n}}{\max\limits_{1\leq j\leq M_n}[\rep]_j^2M_n^3}> \frac{4\Opw_{\Mo+1}}{\max\limits_{1\leq j\leq \Mo+1}[\rep]_j^2(\Mo+1)^3}.
\end{equation*}
Thus,  $\Mo+1> M_n$, i.e. $\Mo\geq M_n$, which completes the proof.
\end{proof}

In the following, we make use of the notation $\sigma_Y^2:=\Ex[Y^2]$ and $\widehat \sigma_Y^2:=n^{-1}\sum_{i=1}^nY_i^2$. 
Further, let us introduce the events
\begin{gather}
 \cH:=\set{\|\Xi_m\|\|[\op]_{\um}^{-1}\|\leq 1/4  \quad\forall\,1\leq m\leq (\Mo+1)},\\
  \cG:=\set{\sigma_Y^2\leq 2\,\widehat \sigma_Y^2\leq 3\,\sigma_Y^2},\\
\cJ:=\set{\|[\op]_\um^{-1}V_m\|^2\leq \frac{1}{8}\big(\|[\op]_\um^{-1}[g]_\um\|^2+\sigma_Y^2\big)\quad \forall\,1\leq m\leq\Mo}.
\end{gather}

\begin{lem}\label{adapt:full:lem:0}
Let $T\in\cT_{d,D}^\opw$. Then it holds $ \cH\cap\cG\cap\cJ\subset\cA$.
 \end{lem}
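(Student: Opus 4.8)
The plan is to check the two conditions defining $\cA$ separately on $\cH\cap\cG\cap\cJ$: the sandwich $\Mu\le\hM\le\Mo$ for the random cut-off, and the two-sided bound $\pen_m\le\hpen_m\le 8\pen_m$ for $1\le m\le\Mo$. Both reduce to comparing the ``hat'' quantities built from $[\hop]_{\um}$ with their deterministic analogues built from $[\op]_{\um}$, which on $\cH$ is a routine Neumann-series computation. Concretely, on $\cH$ we have $\normV{[\op]_{\um}^{-1}\Xi_m}\le1/4$ for $1\le m\le\Mo+1$, so $[\hop]_{\um}=[\op]_{\um}(I+[\op]_{\um}^{-1}\Xi_m)$ is invertible with $\normV{[\hop]_{\um}^{-1}[\op]_{\um}}\le 4/3$; writing $[\hop]_{\um}^{-1}=\big([\hop]_{\um}^{-1}[\op]_{\um}\big)[\op]_{\um}^{-1}$ and, conversely, $[\op]_{\um}^{-1}=[\hop]_{\um}^{-1}+[\hop]_{\um}^{-1}\Xi_m[\op]_{\um}^{-1}$, one obtains two-sided bounds with absolute constants between $\normV{[\hop]_{\um}^{-1}}$ and $\normV{[\op]_{\um}^{-1}}$, between $\normV{[\rep]_{\um}^t[\hop]_{\um}^{-1}}$ and $\normV{[\rep]_{\um}^t[\op]_{\um}^{-1}}$ (hence between $\hDelta_m$ and $\Delta_m$ for $m\le\Mo$), and between $\normV{[\hop]_{\um}^{-1}V_m}$ and $\normV{[\op]_{\um}^{-1}V_m}$.

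For $\{\Mu\le\hM\le\Mo\}$ I would repeat the argument of Lemma \ref{app:upperboundmodel:l1} with $\op$ replaced by $\hop$. Since $T\in\cTdDw$ gives $\opw_m^{-1}/D\le\normV{[\op]_{\um}^{-1}}^2\le D\opw_m^{-1}$, on $\cH$ also $\normV{[\hop]_{\um}^{-1}}^2$ lies between absolute multiples of $\opw_m^{-1}$ for $m\le\Mo+1$; inserting this into the threshold inequalities defining $\hM$, $\Mo$, $\Mu$ and using that $\Mo$ and $\Mu$ carry the safety factor $4D$, one reads off that for $m\le\Mu$ the defining threshold of $\hM$ is not yet exceeded, so $\hM\ge\Mu$, while at $m=\Mo+1$ it is exceeded, so $\hM\le\Mo$. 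This last step is exactly why $\cH$ is imposed up to the index $\Mo+1$.

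For the penalty comparison I would use the identity $[\hop]_{\um'}^{-1}[\widehat g]_{\um'}=[\sol_{m'}]_{\um'}+[\hop]_{\um'}^{-1}V_{m'}$, which follows from $[\widehat g]_{\um'}=[\hop]_{\um'}[\sol_{m'}]_{\um'}+V_{m'}$. On $\cJ$, $\normV{[\op]_{\um'}^{-1}V_{m'}}^2\le\tfrac18\big(\normV{[\sol_{m'}]_{\um'}}^2+\sigma_Y^2\big)$ for $m'\le\Mo$, which together with the $\cH$-estimate for $\normV{[\hop]_{\um'}^{-1}V_{m'}}$ squeezes $\normV{[\hop]_{\um'}^{-1}[\widehat g]_{\um'}}^2$ between absolute multiples of $\normV{[\sol_{m'}]_{\um'}}^2+\sigma_Y^2$; taking maxima over $m'\le m\le\Mo$ and adding the $\cG$-comparison $\tfrac12\sigma_Y^2\le\widehat\sigma_Y^2\le\tfrac32\sigma_Y^2$ yields $\widehat\varsigma_m^2$ and $\varsigma_m^2$ comparable up to absolute constants. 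Multiplying this with the comparison of $\hDelta_m$ and $\Delta_m$ from the first paragraph and inserting the numerical factors ($24$ in $\pen_m$, $204$ in $\hpen_m$) gives $\pen_m\le\hpen_m\le 8\pen_m$ on $\cH\cap\cG\cap\cJ$, whence the claimed inclusion. The one step needing genuine care — where the constants $1/4$ in $\cH$ and $1/8$ in $\cJ$ must be calibrated against $24$, $204$ and $8$ — is this two-sided control of the empirical noise level $\widehat\varsigma_m^2$ by $\varsigma_m^2$: one must prevent $\normV{[\hop]_{\um'}^{-1}[\widehat g]_{\um'}}^2$ both from exploding and from collapsing relative to $\normV{[\sol_{m'}]_{\um'}}^2+\sigma_Y^2$, and the decomposition $[\hop]_{\um'}^{-1}[\widehat g]_{\um'}=[\sol_{m'}]_{\um'}+[\hop]_{\um'}^{-1}V_{m'}$ with the bound from $\cJ$ is exactly what achieves it.
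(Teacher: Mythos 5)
Your proposal mirrors the paper's own proof step by step: the Neumann-series argument on $\cH$ to compare $[\hop]_{\um}^{-1}$ with $[\op]_{\um}^{-1}$, the identity $[\widehat g]_{\um}=[\hop]_{\um}[\sol_m]_{\um}+V_m$ combined with $\cJ$ and $\cG$ to trap $\widehat\varsigma_m^2$ between fixed multiples of $\varsigma_m^2$, and the threshold comparison for $\Mu\le\hM\le\Mo$ via the link condition exactly as in Lemma~\ref{app:upperboundmodel:l1}, so this is the same approach. You are also right to flag the numerical calibration as the delicate point: the paper's own computation actually yields $\pen_m\le\hpen_m\le 18\,\pen_m$ rather than the $8\,\pen_m$ appearing in the definition of $\cA$, a small internal inconsistency in the constants that is harmless for the downstream argument (only the order matters there) but confirms your caution.
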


 \begin{proof}
For all $1\leq m\leq \Mo$ observe that condition $\|\Xi_m\|\|[\op]_{\um}^{-1}\|\leq 1/4$ yields by the usual Neumann series argument that $\|([I]_{\um}+\Xi_m[T]_{\um}^{-1})^{-1}-[I]_{\um}\|\leq 1/3$. Thus, using the identity $[\hop]_{\um}^{-1} = [\op]_{\um}^{-1} - [\op]_{\um}^{-1}\big(([I]_{\um}+\Xi_m[T]_{\um}^{-1})^{-1}-[I]_{\um}\big)$ we conclude
  \begin{gather*}
  2\|[\rep]_{\um}^t[\op]_{\um}^{-1}\|\leq 3\|[\rep]_{\um}^t[\hop]_{\um}^{-1}\|\leq 4\|[\rep]_{\um}^t[\op]_{\um}^{-1}\|.
\end{gather*}
Similarly, we have $ 2\|[\op]_{\um}^{-1}v_m\|\leq3 \|[\hop]_{\um}^{-1}v_m\|\leq 4\|[\op]_{\um}^{-1}v_m\|$ for all $v_m\in\mathbb R^m$.
Thereby, since $[\hop]_\um^{-1} V_m=[\hop]_\um^{-1} [\hgf]_\um-[\op]_\um^{-1}[g]_\um$ we conclude
  \begin{gather*}
  \|[\op]_{\um}^{-1}[g]_{\um}\|^2\leq (32/9)\|[\op]_\um^{-1} V_m\|^2 + 2\|[\hop]_\um^{-1}[\hgf]_{\um}\|^2,\\
  \|[\hop]_{\um}^{-1}[\hgf]_{\um}\|^2\leq (32/9)\|[\op]_\um^{-1} V_m\|^2 + 2\|[\op]_\um^{-1}[g]_{\um}\|^2.
\end{gather*}
On $\cJ$ it holds $\|[\op]_\um^{-1} V_m\|^2 \leq \frac{1}{8}(\|[\op]_\um^{-1}[g]_{\um}\|^2 + \sigma_Y^2)$. Thereby, the last two inequalities imply
\begin{gather*}
  (5/9)(\|[\op]_{\um}^{-1}[g]_{\um}\|^2 + \sigma_Y^2)\leq \sigma_Y^2 + 2\|[\hop]_\um^{-1}[\hgf]_{\um}\|^2,\\
  \|[\hop]_{\um}^{-1}[\hgf]_{\um}\|^2\leq (22/9)\|[\op]_\um^{-1}[g]_{\um}\|^2 +(4/9)\sigma_Y^2.
\end{gather*}
On $\cG$ it holds $\sigma_Y^2\leq 2\widehat \sigma_Y^2\leq 3\sigma_Y^2$ which gives
\begin{gather*}
  (5/9)(\|[\op]_{\um}^{-1}[g]_{\um}\|^2 + \sigma_Y^2)\leq (3/2)\widehat \sigma_Y^2+ 2\|[\hop]_\um^{-1}[\hgf]_{\um}\|^2,\\
  \|[\hop]_{\um}^{-1}[\hgf]_{\um}\|^2+\widehat \sigma_Y^2\leq (22/9)\|[\op]_\um^{-1}[g]_{\um}\|^2 +(10/9)\sigma_Y^2.
\end{gather*}
Combing the last two inequalities we conclude for all $1\leq m\leq \Mo$
\begin{equation*}
  (5/18)\big(\|[\op]_{\um}^{-1}[g]_{\um}\|^2 + \sigma_Y^2\big)\leq \|[\hop]_{\um}^{-1}[\hgf]_{\um}\|^2+\widehat \sigma_Y^2\leq (22/9)\big(\|[\op]_\um^{-1}[g]_{\um}\|^2 +\sigma_Y^2\big).
\end{equation*}
Consequently, we have
\begin{equation*}
 \cH\cap\cG\cap\cJ\subset\set{ 4\,\Delta_m\leq9\,\hDelta_m\leq 16\,\Delta_m\text{ and } 5\,\varsigma_m^2\leq 18\,\widehat\varsigma_m^2\leq 44\,\varsigma_m^2\quad\forall 1\leq m\leq\Mo}
\end{equation*}
and thus, $\cH\cap\cG\cap\cJ\subset\set{ \pen_m\leq \hpen_m\leq 18\pen_m \,\forall 1\leq m\leq\Mo}$.
Moreover, it holds
 $\cH\subset\big\{\Mu\leq\hM\leq \Mo\big\}$, which can be seen as follows. Consider $\{\hM<\Mu\}$. In case of $\hM=\Mh$ or $\Mu=1$ clearly $\{\hM<\Mu\}=\emptyset$.
 Otherwise by the definition of $\hM$ it holds 
 \begin{equation*}
 \{\hM<\Mu\}=\bigcup_{m=1}^{\Mu-1}\set{\hM =m}\subset\Big\{\exists 2\leq m\leq \Mu : m^3\normV{\fou{\hop}_{\um}^{-1}}^{2} \max\limits_{1\leq j\leq m}[h]_j^2> a_n\Big\}.  
 \end{equation*}
 By the definition of $\Mu$ and the property $\normV{\fou{\op}_{\um}^{-1}}^{2}\leq D\opw_m^{-1}$ there  exists   $2\leq m\leq \Mu$ such that on $\{\hM<\Mu\}$ it holds $\normV{\fou{\hop}_{\um}^{-1}}^{2}>4D\opw_m^{-1}\geq4\,\normV{\fou{\op}_{\um}^{-1}}^{2}$ and thereby,
\begin{align}
\label{diss:eq:13}
\set{\hM < \Mu}&\subset
\set{\exists 2\leq m\leq \Mu:\normV{\fou{\hop}_{\um}^{-1}}^{2}\geq4\,\normV{\fou{\op}_{\um}^{-1}}^{2}}.
\end{align}
Consider $\{\hM > \Mo\}$. In case of and $\hM=\Mh$ or $\Mu=1$ clearly $\{\hM<\Mu\}=\emptyset$. Otherwise, condition $T\in\cT_{d}^\opw$ with $d\leq D$ implies $\opw_m^{-1}\leq D\normV{\fou{\op}_{\um}^{-1}}^{2}$  as seen in the proof of Lemma \ref{app:part:l6}. Thereby, we conclude similarly as above
\begin{align}\label{diss:eq:14}
\set{\hM > \Mo}\subset
\set{\normV{\fou{\op}_{\underline{\Mo+1}}^{-1}}^{2}\geq4\normV{\fou{\hop}_{\underline{\Mo+1}}^{-1}}^{2}}.
\end{align}
Again applying the Neumann series argument we observe
\begin{equation*}\label{diss:eq:15}
\cH\subset  \set{\forall\,1\leq m\leq (\Mo+1) :
  2\normV{\fou{\Op}_\um^{-1}}\leq 3\normV{\fou{\hOp}_\um^{-1}}\leq 4\normV{\fou{\Op}_\um^{-1}}},
\end{equation*}
which combined  with  \eqref{diss:eq:13} and \eqref{diss:eq:14}
 yields $\big\{\Mu\leq\hM\leq \Mo\big\}^c\subset\cH^c$ and thus, completes the proof.
\end{proof}

\begin{lem}\label{adapt:full:lem:1}
Under the conditions of Theorem \ref{fully adaptive unknown operator} we have for all $n\geq 1$
\begin{equation*}
 n^4(\Mh)^4P(\cA^c)\leq \mathcal C.
\end{equation*}
 \end{lem}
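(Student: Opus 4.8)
The plan is to reduce the statement, via Lemma~\ref{adapt:full:lem:0}, to bounding the three events $\cH$, $\cG$, $\cJ$ introduced just before that lemma. Since it gives $\cH\cap\cG\cap\cJ\subseteq\cA$, we have $\cA^c\subseteq\cH^c\cup\cG^c\cup\cJ^c$, and because $\Mh\leq\lfloor n^{1/4}\rfloor$ forces $(\Mh)^4\leq n$, it suffices to show $P(\cH^c)+P(\cG^c)+P(\cJ^c)\leq\mathcal C\,n^{-5}$. Each of the three bounds will only be needed for $n$ exceeding some threshold $n_0=n_0(\Fswsr,\cTdDw,\sigma,\eta,h)$, since for $n<n_0$ one has trivially $n^4(\Mh)^4P(\cA^c)\leq n^5\leq n_0^5$.

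For $P(\cH^c)$ I would take a union bound over $1\leq m\leq\Mo+1$. Since $T\in\cTdDw$ gives $\normV{[\op]_{\um}^{-1}}^2\leq D\opw_m^{-1}$, the event $\{\normV{\Xi_m}\,\normV{[\op]_{\um}^{-1}}>1/4\}$ is contained in $\{m^{-2}n\normV{\Xi_m}^2>n\opw_m(16Dm^2)^{-1}\}$, whose probability Lemma~\ref{pr:minimax:l1} bounds by $2\exp(-n\opw_m(128D\eta^2m^2)^{-1}+2\log m)$ (the threshold is admissible there because $\opw_m\leq\opw_1=1\leq 64D\eta^2m^2$). As $\opw$ is nonincreasing and $m\leq\Mo+1$, the exponent is at least $n\opw_{\Mo+1}(128D\eta^2(\Mo+1)^2)^{-1}$, which by the hypothesis $(\Mo+1)^2\log n=o(n\opw_{\Mo+1})$ eventually exceeds any fixed multiple of $\log n$; together with $2\log m\leq(\log n)/2$ this yields $P(\cH^c)\leq 2(\Mo+1)\,n^{-c_n}$ with $c_n\to\infty$, in particular $P(\cH^c)\leq\mathcal C\,n^{-5}$ for $n$ large.

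For $P(\cG^c)$, write $\widehat\sigma_Y^2=n^{-1}\sum_{i=1}^nY_i^2$, so $\cG^c=\{\,|\widehat\sigma_Y^2-\sigma_Y^2|>\sigma_Y^2/2\,\}$, an event about an average of i.i.d.\ terms. The extra hypothesis $\sup_{j\geq1}\Ex|e_j(Z)|^{20}\leq\eta^{20}$, Minkowski's inequality, $\sol\in\Fswsr$ and $\sum_{j\geq1}\sw_j^{-1}<\infty$ (Assumption~\ref{ass:reg}) give $\Ex|\sol(Z)|^{20}\leq C(\sw,\sr,\eta)$; together with $P_{U|W}\in\cU_\sigma^\infty$ this yields $\Ex|Y|^{20}<\infty$, hence $\Ex|Y^2-\Ex[Y^2]|^{10}<\infty$. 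A Rosenthal inequality then bounds $\Ex|\widehat\sigma_Y^2-\sigma_Y^2|^{10}$ by $C\,n^{-5}$, and Markov's inequality gives $P(\cG^c)\leq(2/\sigma_Y^2)^{10}\,C\,n^{-5}$.

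The main obstacle is $P(\cJ^c)$. Since $\normV{[\op]_{\um}^{-1}[g]_{\um}}^2\geq0$ and $\sigma_Y^2>0$, one has $\cJ^c\subseteq\bigcup_{m=1}^{\Mo}\{\,\normV{[\op]_{\um}^{-1}V_m}^2>\sigma_Y^2/8\,\}$, so each event asks a random variable to exceed a \emph{fixed} positive level. Writing $\normV{V_m}^2=\sum_{j=1}^m(V_m)_j^2$ with $(V_m)_j$ the $j$-th coordinate of $V_m$, and using $T\in\cTdDw$, one gets $\normV{[\op]_{\um}^{-1}V_m}^2\leq D\opw_m^{-1}\,m\,\max_{1\leq j\leq m}(V_m)_j^2$, so it is enough to bound $P\big(|(V_m)_j|>\sqrt{\sigma_Y^2\opw_m/(8Dm)}\,\big)$ and sum over $1\leq j\leq m\leq\Mo$. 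Here $(V_m)_j=n^{-1}\sum_{i=1}^n(U_i+\sol(Z_i)-\sol_m(Z_i))f_j(W_i)$; splitting the summand into its $U$-part and its $(\sol-\sol_m)$-part and conditioning on $(W_1,\dots,W_n)$ for the former, exactly as in the proofs of Lemmata~\ref{app:part:l2} and~\ref{app:part:l3}, a Bernstein argument applies: the conditional Cramer condition of $P_{U|W}\in\cU_\sigma^\infty$ controls the $U$-part, the Cramer bound \eqref{app:part:l3:cram} (uniformly bounded here because $m^3\sw_m^{-1}=o(1)$ by Assumption~\ref{ass:reg}) controls the other, and $\sum_if_j^2(W_i)$ and $\max_i|f_j(W_i)|$ are handled via Assumptions~\ref{ass:A1}(i), \ref{ass:A2} and $\Ex\max_i|f_j(W_i)|\leq(n\,\Ex[f_j^4(W)])^{1/4}\leq\eta\,n^{1/4}$. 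This produces a bound of order $\exp(-c\,n\opw_m/m)$, up to a harmless factor $\exp(c'\,n^{1/4})$ from the $\max_i$ term. Finally, the definition~\eqref{part:def:bounds:o} of $\Mo$ forces $m^3/\opw_m\lesssim a_n$ for $1\leq m\leq\Mo$, hence $n\opw_m/m\geq n/(C\,a_n)$; and $a_n=n^{1-1/\log(2+\log n)}(1+\log n)^{-1}$ is designed so that $n/a_n$ outgrows every power of $\log n$, so $n\opw_m/m$ eventually dominates any fixed multiple of $\log n$. Summing over the at most $(\Mh)^2\leq\sqrt n$ pairs $(j,m)$ then gives $P(\cJ^c)\leq\mathcal C\,n^{-5}$ (in fact much smaller), and combining the three estimates completes the proof.
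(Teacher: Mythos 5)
Your framework is the paper's: Lemma~\ref{adapt:full:lem:0} gives $\cA^c\subset\cH^c\cup\cG^c\cup\cJ^c$, the crude bound $(\Mh)^4\leq n$ reduces the target to $n^5\big(P(\cH^c)+P(\cG^c)+P(\cJ^c)\big)\leq\mathcal C$, and your treatments of $\cH$ (union bound via Lemma~\ref{pr:minimax:l1} plus $(\Mo+1)^2\log n=o(n\opw_{\Mo+1})$) and of $\cG$ (a tenth-moment/Rosenthal bound on $\widehat\sigma_Y^2-\sigma_Y^2$ using $\sup_j\Ex|e_j(Z)|^{20}\leq\eta^{20}$, then Markov) match the paper exactly, with Petrov's Theorem~2.10 playing the role of your Rosenthal inequality.

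The genuine gap is in $P(\cJ^c)$. You split the summand $(U_i+\sol(Z_i)-\sol_m(Z_i))f_j(W_i)$ into a $U$-part and a $(\sol-\sol_m)$-part and condition on $W_1,\dots,W_n$ for the $U$-part, and then assert the $\max_i|f_j(W_i)|$ contribution is a ``harmless factor $\exp(c'n^{1/4})$.'' Taken literally this sinks the proof: the exponent $n\opw_m/m$ you are competing against is \emph{not} always $\gg n^{1/4}$. By the definition of $\Mo$ in \eqref{part:def:bounds:o}, for polynomially decaying $\opw_m\sim m^{-2a}$ one has $\Mo\sim a_n^{1/(2a+3)}$ and hence $n\opw_{\Mo}/\Mo\sim n^{2/(2a+3)}$, which for $a$ large is a small power of $n$ --- well below $n^{1/4}$. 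A true multiplicative factor $\exp(c'n^{1/4})$ would therefore dominate $\exp(-cn\opw_m/m)$ and make the bound diverge. What $\max_i|f_j(W_i)|$ actually does in Bernstein is sit in the \emph{denominator} of the exponent, so the worst case is a weakened exponent of order $t/(\sigma\max_i|f_j(W_i)|)\approx n^{3/4}\sqrt{\opw_m/m}$ after the concavity device of Lemma~\ref{app:part:l2}; using $\opw_m/m\geq m^2\max_j[h]_j^2/(4Da_n)$ one can check this is still at least of order $n^{1/4}$, so the route can be repaired, but the justification you wrote is wrong and the bookkeeping is considerably heavier than what is needed.

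The paper bypasses all of this: it applies Bernstein's inequality \emph{unconditionally} to the i.i.d.\ centered variables $(Y_i-\sol_m(Z_i))f_j(W_i)$. These satisfy a Cramer condition of the type \eqref{app:part:l3:cram} whose constant $\mu_m=\eta(1+Dd)\sqrt{6\sr\,m^3\sw_m^{-1}}$ is \emph{uniformly bounded} in $m$ thanks to $m^3\sw_m^{-1}=o(1)$ from Assumption~\ref{ass:reg} (the $U$-contribution is absorbed via $P_{U|W}\in\cU_\sigma^\infty$ and Assumption~\ref{ass:A2}). With a bounded Cramer constant there is no $\max_i|f_j(W_i)|$ degradation at all, the Bernstein exponent is controlled directly through $n\opw_m/m$ and $n\sqrt{\opw_m/m}$, and both comfortably beat the needed $O(\log n)$ by the choice of $a_n$. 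I would switch your $\cJ$-argument to this direct, unconditional Bernstein route rather than repairing the conditioning-plus-maximum one.
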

\begin{proof}
Due to  Lemma \ref{adapt:full:lem:0} it holds  $n^4(\Mh)^4P(\cA^c)\leq n^4(\Mh)^4\set{P(\cH^c)+P(\cJ^c)+P(\cG^c)}$. Therefore, the assertion follows if the right hand side is bounded by a constant $\mathcal C$, which we prove in the following. Consider $\cH$. 
 From condition $T\in\cT_{d,D}^\opw$ and  Lemma \ref{pr:minimax:l1} we infer
 \begin{equation}\label{adapt:full:lem:1:eq2}
n^4(\Mh)^4P(\cH^c)
\leq 2\exp\Big(-\frac{1}{128D\eta} \frac{ n\opw_{\Mo+1}}{(\Mo+1)^2}+3\log(\Mo+1) + 5\log n\Big)\leq C(\rep,\opw,\eta, D)
\end{equation} 
where the last inequality is  due to condition $(\Mo+1)^2\log n=o(n\opw_{\Mo+1})$.
Consider $\cG$. Due to condition $m^3\sw_m^{-1}=o(1)$ as $m\to\infty$ and $U\in \cU_\sigma^\infty$ we observe
$\Ex[Y^{k}]\leq 2^{k}(\Ex[\sol^{k}(Z)]+\Ex[U^k])\leq C(\sw,\sr, \sigma)\sup_{j\geq 1}\Ex[e_j^k(Z)]$. Thereby, assumption $\sup_{j\geq 1}\Ex[e_j^{20}(Z)]\leq \eta^{20}$ together with
 Theorem 2.10 in \cite{Petrov1995} imply
\begin{multline}\label{adapt:full:lem:1:eq3}
n^4(\Mh)^4P(\cG^c)\leq n^5 P\big(|\widehat\sigma_Y^2-\sigma_Y^2|>\sigma_Y^2/2\big)\leq 1024\,\sigma_Y^{-20}n^5\Ex\big|n^{-1}\sum_{i=1}^nY_i^2-\sigma_Y^2\big|^{10}\\
\leq 1024\,\sigma_Y^{-20}\Ex|Y^2-\sigma_Y^2|^{10}\leq C(\sw,\sr, \sigma,\eta).
\end{multline}
Consider $\cJ$. For all $m\geq 1$ observe that the centered random variables $(Y_i-\sol(Z_i))f_j(W_i)$, $1\leq i\leq n$, satisfy Cramer's condition \eqref{app:part:l3:cram} with $\mu_m=\eta\,(1+Dd)\sqrt{6\sr\,m^3\sw_m^{-1}}\leq C(\eta, \sw,\sr,D)$.
From \eqref{app:gen:upper:l3:e1} in  Lemma \ref{app:gen:upper:l3}, $\sol\in\cF_\sw^\sr$, and $P_{U|W}\in\cU_\sigma^\infty$ we infer $\|\sol_m\|_Z^2+\sigma_Y^2\leq 4(2+Dd)\sr+2\sigma^2$.
Moreover, it holds $\normV{[\op]_\um^{-1}V_m}^2\leq D\opw_m^{-1}\normV{V_m}^2$ by employing condition $T\in\cT_{d,D}^\opw$.
Now Bernstein's inequality yields for all $1\leq m\leq \Mo$ 
\begin{multline*}
n^6 P\Big(\|[\op]_\um^{-1}V_m\|^2> (\|[\op]_\um^{-1}[g]_\um\|^2+\sigma_Y^2)/8\Big)\\\hfill
\leq n^6\sum_{j=1}^m P\Big(\Big|\sum_{i=1}^n(Y_i-\sol(Z_i))f_j(W_i)\Big|^2> \frac{n^2\opw_m}{8Dm}\big(\|\sol_m\|_Z^2+\sigma_Y^2\big)\Big)\\\hfill
\leq2 \,n^6m\,\exp\Big(-\frac{n^2\opw_mm^{-1}(\|\sol_m\|_Z^2+\sigma_Y^2)}{32Dn\mu_m^2 + 16\mu_mn\opw_m^{1/2}m^{-1/2}(\|\sol_m\|_Z^2+\sigma_Y^2)^{1/2}}\Big)\\
\leq 2\exp\Big(7\log n-\frac{n\,\opw_{\Mo}\,\sigma_Y^2}{\Mo\,C(\sigma, \eta, \sw,\sr,D)}\Big).
\end{multline*}
Due to the definition of $\Mo$ the last estimate implies $n^4(\Mh)^4 P(\cJ^c)\leq \mathcal C$, which completes the proof.
\end{proof}

\end{document}